\newcommand{\Z}{\ensuremath{\mathbf{Z}}}
\newcommand{\Q}{\ensuremath{\mathbf{Q}}}
\newcommand{\R}{\ensuremath{\mathbf{R}}}
\newcommand{\D}{\ensuremath{\mathbf{D}}}
\renewcommand{\P}{\ensuremath{\mathbf{P}}}
\newcommand\ti[1]{\ensuremath{\tilde{#1}}}
\newcommand{\wti}[1]{\ensuremath{\widetilde{#1}}}
\newcommand{\ho}{\mathbin{\hat{\otimes}}}
\DeclareMathOperator{\red}{red}
\DeclareMathOperator{\Spec}{Spec}
\newcommand{\Ker}{\textrm{Ker}}
\DeclareMathOperator{\Card}{Card}
\DeclareMathOperator{\Int}{Int}
\newcommand{\mon}{\textrm{mon}}
\newcommand{\Xk}{\mathfrak{X}}
\newcommand{\As}{\mathscr{A}}
\newcommand{\Bs}{\mathscr{B}}
\newcommand{\Hs}{\mathscr{H}}
\newcommand{\Os}{\mathscr{O}}
\newcommand{\Mc}{\mathcal{M}}
\newcommand{\Pc}{\mathcal{P}}
\newcommand{\cn}[2]{\ensuremath{\llbracket{#1},{#2}\rrbracket}}
\newcommand{\of}[3]{\ensuremath{\mathopen{#1}{#2}\mathclose{#3}}}
\newcommand{\la}{\ensuremath{\langle}}
\newcommand{\ra}{\ensuremath{\rangle}}
\newcommand{\eps}{\ensuremath{\varepsilon}}
\newcommand{\br}{\ensuremath{{\boldsymbol{r}}}}
\newcommand\wc{{\mkern 2mu\cdot\mkern 2mu}}
\newcommand\va{|\wc|}
\newcommand\cf{\textit{cf}.}
\theoremstyle{plain}
\newtheorem{thm}{Th\'eor\`eme}[section]
\newtheorem{cor}[thm]{Corollaire}
\newtheorem{prop}[thm]{Proposition}
\newtheorem{lem}[thm]{Lemme}
\newtheorem{defi}[thm]{D\'efinition}
\newtheorem{thmintro}{Th\'eor\`eme}
\theoremstyle{remark}
\newtheorem{rem}[thm]{Remarque}
\begin{document}
\setlength{\baselineskip}{0.52cm}	

\title[Composantes connexes $p$-adiques]{Sur les composantes connexes d'une famille d'espaces analytiques $p$-adiques}
\author{J\'er\^ome Poineau}
\address{Institut de recherche math\'ematique avanc\'ee, 7, rue Ren\'e Descartes, 67084 Strasbourg, France}
\email{jerome.poineau@math.unistra.fr}
\thanks{L'auteur est membre du projet ANR \og GLOBES \fg~: ANR-12-JS01-0007-01.}

\date{\today}

\subjclass{14G22}
\keywords{Espaces de Berkovich, g\'eom\'etrie analytique rigide, composantes connexes}

\begin{abstract}
\selectlanguage{french}
Soit~$X=\Mc(\As)$ un espace affino\"{\i}de et soient $f,g\in\As$. Nous \'etudions les ensembles de composantes connexes des espaces d\'efinis par une in\'egalit\'e de la forme $|f|\le r\, |g|$, avec $r\ge 0$. Nous montrons qu'il existe une partition finie de~$\R_{+}$ en intervalles sur lesquels ces ensembles sont canoniquement en bijection et que les bornes de ces intervalles appartiennent \`a~$\sqrt{\rho(\As)}$.

\vskip 0.5\baselineskip

\selectlanguage{english}
\noindent{\bf Abstract}
\vskip 0.5\baselineskip
\noindent
{\bf On the connected components of a family of $p$-adic analytic spaces.} Let~$X=\Mc(\As)$ be an affinoid space and let $f,g\in\As$. We study the sets of connected components of the spaces defined by an inequality of the form $|f|\le r\, |g|$, with $r\ge 0$. We prove that there exists a finite partition of~$\R_{+}$ into intervals where those sets are canonically in bijection and that the bounds of those intervals belong to~$\sqrt{\rho(\As)}$.
\end{abstract}

\maketitle

Dans ce texte, nous nous int\'eressons \`a la variation des composantes connexes d'une famille particuli\`ere d'espaces analytiques $p$-adiques ou, plus g\'en\'eralement, d'espaces analytiques d\'efinis sur un corps ultram\'etrique. Pour aborder un tel probl\`eme, il est commode de se placer dans le cadre des espaces analytiques d\'efinis par V.~Berkovich (\cf~\cite{rouge, bleu}). Sous-jacent \`a ceux-ci se trouve en effet un v\'eritable espace topologique (par opposition \`a un site) qui jouit, de surcro\^{\i}t, de bonnes propri\'et\'es~: compacit\'e locale ou connexit\'e par arcs locale, pour ne citer que les plus simples.

La topologie des espaces de Berkovich a r\'ecemment fait l'objet de plusieurs \'etudes approfondies qui ont r\'ev\'el\'e son caract\`ere mod\'er\'e. Dans le cas lisse, on sait, par exemple, depuis~\cite{smoothI}, que ces espaces ont le type d'homotopie de CW-complexes. Ce r\'esultat a \'et\'e \'etendu aux analytifi\'ees de vari\'et\'es quasi-projectives par E.~Hrushovski et F.~Loeser, \textit{via} l'utilisation de th\'eorie des mod\`eles. L'introduction de ces nouvelles techniques a \'egalement permis de d\'emontrer de nombreux r\'esultats de m\^eme nature que ceux dont on dispose en g\'eom\'etrie alg\'ebrique r\'eelle ou o-minimale. Nous renvoyons le lecteur int\'eress\'e \`a l'article original~\cite{HL} ou \`a sa recension~\cite{moderes} pour le s\'eminaire Bourbaki.

Notre article s'inscrit dans la lign\'ee de ces travaux. Afin de donner un \'enonc\'e pr\'ecis du th\'eor\`eme principal que nous souhaitons d\'emontrer, fixons, et ce pour toute la suite du texte, un corps~$k$ muni d'une valeur absolue ultram\'etrique~$\va$ pour laquelle il est complet.

\begin{thmintro}\label{thm:partition}
Soit~$X=\Mc(\As)$ un espace $k$-affino\"{\i}de. Soient $f ,g\in \As$. Notons~$\sqrt{\rho(\As)}$ le sous-$\Q$-espace vectoriel de~$\R_{+}^\ast$ engendr\'e par les valeurs non nulles de la norme spectrale sur~$\As$. Il existe une partition finie~$\Pc$ de~$\R_{+}$ de la forme
\[\Pc = \{\of{[}{0,a_{0}}{[}, \of{[}{a_{0},a_{1}}{[},\dotsc, \of{[}{a_{n-1},a_{n}}{[}, \of{[}{a_{n},+\infty}{[}\},\]
avec $a_{0},\dotsc,a_{n} \in \sqrt{\rho(\As)}$, satisfaisant la condition suivante~: quel que soit $I\in \Pc$ et quels que soient $r\le s \in I$, l'application naturelle
\[\pi_{0}(\{x\in X \mid |f(x)|\le r\, |g(x)|\}) \to \pi_{0}(\{x\in X \mid |f(x)|\le s\, |g(x)|\})\]
est bijective.

Le m\^eme r\'esultat vaut en rempla\c{c}ant l'ensemble des composantes connexes par l'ensemble des composantes connexes g\'eom\'etriques, l'ensemble des composantes irr\'eductibles ou l'ensemble des composantes irr\'eductibles g\'eom\'etriques.
\end{thmintro}

Mentionnons que, lorsque~$X$ est strictement $k$-affino\"{\i}de, ce qui correspond au cas rigide classique, on a $\sqrt{\rho(\As)} = \sqrt{|k^\ast|}$.

\medbreak

En d\'epit de son apparence purement topologique, ce r\'esultat poss\`ede des cons\'equences arithm\'etiques profondes. A.~Abbes et T.~Saito l'\'enoncent, pour des ensembles d\'efinis par des \'equations de la forme $|f|\ge \eps$ et dans une version affaiblie, dans l'article~\cite{Abbes} et l'utilisent pour d\'efinir la filtration de ramification d'un corps local \`a corps r\'esiduel imparfait. Pr\'ecisons que les informations sur la nature des bornes fournissent, dans ce contexte, un analogue du th\'eor\`eme de Hasse-Arf.

Indiquons encore que nous avons propos\'e une premi\`ere d\'emonstration de ce th\'eor\`eme, toujours pour des ensembles d\'efinis par des \'equations de la forme $|f|\ge \eps$, dans~\cite{Compositio}. Dans la m\'ethode utilis\'ee alors, on interpr\`ete les espaces \'etudi\'es comme les fibres d'un morphisme analytique. On peut alors choisir un mod\`ele formel de ce morphisme et \'etudier sa fibre sp\'eciale, un morphisme de sch\'emas, par les m\'ethodes classiques de g\'eom\'etrie alg\'ebrique expos\'ees en d\'etails dans les EGA. Revenir du morphisme sp\'ecialis\'e au morphisme g\'en\'erique est une \'etape d\'elicate qui n'est possible que sous certaines hypoth\`eses sur le mod\`ele formel choisi. Pour assurer l'existence d'un mod\`ele convenable, nous avons d\^u invoquer des th\'eor\`emes difficiles, sortes de versions faibles de la r\'esolution des singularit\'es (th\'eor\`eme de la fibre r\'eduite~\cite{FIV} et th\'eor\`eme d'\'elimination de la ramification sauvage~\cite{Epp}).

Mentionnons \'egalement que, dans le cas o\`u toutes les donn\'ees de l'\'enonc\'e sont alg\'ebriques ($X$ est d\'efini par des in\'egalit\'es entre polyn\^omes dans l'analytification d'une vari\'et\'e quasi-projective, etc.), alors le r\'esultat d\'ecoule de~\cite[theorem~13.4.3]{HL}, et m\^eme dans une version bien plus forte requ\'erant que les inclusions induisent des \'equivalences homotopiques.

Nous proposons ici une preuve simple du th\'eor\`eme~\ref{thm:partition} se dispensant notamment de tout recours aux sch\'emas formels. Nous utilisons des techniques classiques en g\'eom\'etrie de Berkovich --- la notion de bord de Shilov appara\^{\i}t de fa\c{c}on cruciale --- m\^el\'ees \`a des arguments de r\'eduction au sens de la norme spectrale, dans la version raffin\'ee gradu\'ee propos\'ee par M.~Temkin dans~\cite{TemkinII}.

\medbreak

Concluons cette introduction par quelques mots sur la structure de l'article. Dans la premi\`ere section, on d\'emontre une partie cons\'equente du th\'eor\`eme~\ref{thm:partition}~: l'existence d'une partition finie en intervalles de la forme voulue et v\'erifiant les propri\'et\'es d\'esir\'ees, mais sans information sur la nature des bornes des intervalles. La preuve repose sur une observation assurant que, dans un espace affino\"{\i}de~$X$, les composantes connexes d'un domaine affino\"{\i}de d\'efini par une in\'egalit\'e du type $|f|\le r\, |g|$, avec~$f$ et~$g$ sans z\'eros communs, peuvent \^etre rep\'er\'ees par des points du bord de Shilov de~$X$. L'observation elle-m\^eme se d\'emontre \`a l'aide d'un analogue du principe du maximum.

Dans la seconde section, nous consid\'erons un espace affino\"{\i}de~$X$ et l'application de r\'eduction associ\'ee $\red\colon X \to \ti X$. Dans~\cite{Bosch}, S.~Bosch d\'emontre que, sous certaines hypoth\`eses, l'image inverse d'un point ferm\'e de la $\ti k$-vari\'et\'e~$\ti X$ est connexe. Nous montrons que ce r\'esultat vaut sans autre hypoth\`ese que l'\'equidimensionalit\'e de~$X$ et l'\'etendons aux ferm\'es de Zariski connexes de~$\ti X$.

La troisi\`eme section contient des rappels sur le lieu $H$-strict d'un espace analytique, $H$ \'etant un sous-groupe de~$\R_{+}^\ast$. Grossi\`erement parlant, ce lieu est constitu\'e des points qui poss\`edent un voisinage qui pouvant \^etre d\'efini par des in\'egalit\'es o\`u n'interviennent que des nombres r\'eels appartenant \`a~$H$. D'apr\`es des r\'esultats de~\cite{TemkinII} et~\cite{CTdescent}, l'appartenance d'un point au lieu $H$-strict peut se lire sur la r\'eduction du germe de l'espace en ce point.

Dans la derni\`ere section, nous appliquons ces techniques pour d\'emontrer que, si $X$ est un espace analytique $H$-strict, $f$ et~$g$ des fonctions analytiques sur~$X$ sans z\'eros communs et~$r$ un \'el\'ement de $\R_{+}^\ast \setminus \sqrt{H}$, les ensembles de composantes connexes des lieux d\'efinis respectivement par les in\'egalit\'es $|f| < r\, |g|$ et $|f|\le r\, |g|$ sont canoniquement en bijection. Ce r\'esultat et ses g\'en\'eralisations nous semblent pr\'esenter un int\'er\^et ind\'ependant, de m\^eme que les techniques, bas\'ees sur la r\'eduction des germes, mises en {\oe}uvre pour les d\'emontrer. 

En regroupant les r\'esultats obtenus, il est alors ais\'e de compl\'eter la preuve du th\'eor\`eme~\ref{thm:partition}. Supposons, pour simplifier, que~$f$ et~$g$ ne s'annulent pas simultan\'ement sur~$X$. Choisissons un nombre r\'eel~$r>0$. D'apr\`es la section~1, pour tout $s>r$ assez proche de~$r$, les ensembles $\pi_{0}(\{|f|\le s\, |g|\})$ sont canoniquement en bijection, et donc \'egalement canoniquement en bijection avec $\pi_{0}(\{|f|< r\,|g|\})$. L'espace affino\"{\i}de~$X$ est $H$-strict avec $H=\rho(\As)$. D'apr\`es la section~3, si~$r$ se trouve hors de~$\sqrt{H}$, les ensembles de composantes connexes pr\'ec\'edents s'identifient alors encore \`a $\pi_{0}(\{|f|\le r\, |g|\})$, emp\^echant ainsi le nombre r\'eel~$r$ d'\^etre une borne de l'un des intervalles consid\'er\'es dans le th\'eor\`eme. 

Pour terminer, nous voudrions souligner que, comme il appara\^{\i}t dans l'argument pr\'ec\'edent, m\^eme si l'on ne souhaite d\'emontrer le r\'esultat que pour les espaces strictement affino\"{\i}des de la g\'eom\'etrie rigide classique, la m\'ethode pr\'esent\'ee ici impose de consid\'erer des domaines affino\"{\i}des d\'efinis par une in\'egalit\'e du type $|f|\le r\, |g|$, o\`u~$r$ est arbitraire, et donc de sortir du cadre strictement affino\"{\i}de. 

\section{Variation de connexit\'e}

Soit~$X=\Mc(\As)$ un espace $k$-affino\"{\i}de. Nous noterons~$\Gamma(X)$ son bord de Shilov (\cf~\cite[section~2.4]{rouge}).

Commen\c{c}ons par un r\'esultat \'el\'ementaire.

\begin{lem}\label{lem:injectif}
Soient $f,g \in \As$ et $r\ge 0$. Il existe $\eps>0$ tel que, quel que soit $s \in \of{[}{r, r+\eps}{]}$, l'application naturelle
\[\pi_{0}(\{x\in X \mid |f(x)|\le r\, |g(x)|\}) \to \pi_{0}(\{x\in X \mid |f(x)|\le s\, |g(x)|\})\]
soit injective.
\end{lem}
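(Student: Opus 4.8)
The plan is to treat the statement as an essentially topological one, exploiting the compactness of the $k$-affino\"{\i}de space $X$ together with a finiteness property for the number of connected components of the relevant sets. Throughout, write $X_{s} = \{x\in X \mid |f(x)|\le s\,|g(x)|\}$ for $s\ge 0$. Since $|f|$ and $|g|$ are continuous on~$X$, each $X_{s}$ is a closed, hence compact, subset of~$X$, and the family is nondecreasing in~$s$. The starting observation is that $\bigcap_{s>r} X_{s} = X_{r}$, because $\inf_{s>r}\, s\,|g(x)| = r\,|g(x)|$ at every point $x\in X$.

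First I would establish that $X_{r}$ has only finitely many connected components. For $r=0$ this is immediate, $X_{0}$ being the zero locus of~$f$, a $k$-affino\"{\i}de space. For $r>0$ the point is that $X_{r}$ is the image of a $k$-affino\"{\i}de space under a morphism: setting $\mathscr{B} = \As\langle r^{-1}S\rangle/(f-Sg)$ and $Y=\Mc(\mathscr{B})$, the projection $p\colon Y\to X$ forgetting~$S$ has image exactly $X_{r}$ --- over the locus $g\neq 0$ it is a bijection with $S=f/g$, and over a common zero of~$f$ and~$g$ the fibre is a disk. As the continuous image of a connected set is connected and $p$ is onto $X_{r}$, the induced map $\pi_{0}(Y)\to\pi_{0}(X_{r})$ is surjective; since $Y$ is $k$-affino\"{\i}de, $\pi_{0}(Y)$ is finite, whence $\pi_{0}(X_{r})$ is finite.

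Now denote by $C_{1},\dotsc,C_{m}$ the connected components of $X_{r}$; they are pairwise disjoint compact sets. As $X$ is compact Hausdorff, hence normal, I can choose pairwise disjoint open subsets $V_{1},\dotsc,V_{m}$ of~$X$ with $C_{i}\subseteq V_{i}$. The compact set $K = X\setminus\bigcup_{i} V_{i}$ is then disjoint from $X_{r}=\bigcup_{i} C_{i}$. Applying the intersection identity above to~$K$, the nested family of compact sets $(K\cap X_{s})_{s>r}$ has empty intersection $K\cap X_{r}=\varnothing$; by the finite intersection property there is some $s_{0}>r$ with $K\cap X_{s_{0}}=\varnothing$, that is, $X_{s_{0}}\subseteq\bigcup_{i}V_{i}$. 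Set $\eps=s_{0}-r$.

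It then remains to read off injectivity. Fix $s\in\of{[}{r,r+\eps}{]}$, so that $X_{s}\subseteq X_{s_{0}}\subseteq\bigcup_{i} V_{i}$. The sets $V_{i}\cap X_{s}$ are open in~$X_{s}$, pairwise disjoint, and cover~$X_{s}$, hence each is also closed in~$X_{s}$; consequently every connected component of~$X_{s}$ is contained in a single~$V_{i}$. In particular, the component of~$X_{s}$ containing~$C_{i}$ lies in~$V_{i}$, and since the~$V_{i}$ are disjoint, distinct components~$C_{i}$ of~$X_{r}$ are sent to distinct components of~$X_{s}$ by the natural map, which is exactly its injectivity. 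The only step drawing on more than point-set topology is the finiteness of~$\pi_{0}(X_{r})$, which is where the affino\"{\i}de structure enters; everything else is a compactness argument and should go through unchanged.
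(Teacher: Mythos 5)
Your proof is correct and follows essentially the same route as the paper's: you separate the finitely many connected components of $\{x\in X \mid |f(x)|\le r\,|g(x)|\}$ by pairwise disjoint open sets and then use compactness of the complement to show that $\{x\in X \mid |f(x)|\le s\,|g(x)|\}$ stays inside their union for $s$ close to~$r$ (the paper makes this last step quantitative by bounding $|g|/|f|$ on the complement, where you use a soft nested-intersection argument; the two are interchangeable). Your explicit verification that $\pi_{0}$ of $\{x\in X \mid |f(x)|\le r\,|g(x)|\}$ is finite, via the affinoid space $\Mc\bigl(\As\langle r^{-1}S\rangle/(f-Sg)\bigr)$ surjecting onto it, is a point the paper leaves implicit, and a welcome addition.
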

\begin{proof}
Nous pouvons supposer que $V = \{x\in X \mid |f(x)|\le r\, |g(x)|\}$ n'est pas vide. Consid\'erons ses composantes connexes $C_{1},\dotsc,C_{n}$. Ce sont des parties compactes de~$X$. Puisque~$X$ est s\'epar\'e, nous pouvons trouver des ouverts $U_{1},\dotsc,U_{n}$ de~$X$ deux \`a deux disjoints tels que pour tout~$i$, $U_{i}$ contient~$C_{i}$. Notons~$U$ la r\'eunion des~$U_{i}$. L'application naturelle $\pi_{0}(V) \to \pi_{0}(U)$ est alors injective, et le m\^eme r\'esultat vaut en rempla\c{c}ant~$U$ par n'importe laquelle de ses parties qui contient~$V$. 

Si~$U = X$, le r\'esultat vaut pour tout choix de~$\eps>0$. Supposons d\'esormais que $U \ne X$. Dans ce cas, la partie~$X\setminus U$ est compacte et non vide. Puisqu'elle est disjointe de~$V$, la fonction~$|g|/|f|$ y est donc d\'efinie et y poss\`ede un maximum~$M < r^{-1}$. Choisissons un nombre r\'eel~$\eps>0$ tel que $r+\eps < M^{-1}$. Quel que soit $s\in\of{[}{r,r+\eps}{]}$, la partie $\{x\in X \mid |f(x)|\le s\, |g(x)|\}$ est alors contenue dans~$U$, ce qui permet de conclure.
\end{proof}

\begin{prop}\label{prop:ccbord}
Soient $f,g \in \As$ tels que $(f,g) = (1)$. Soit $r>0$. Notons~$V(f)$ le ferm\'e de Zariski de~$X$ d\'efini par~$f$. Si $\{x\in X \mid |f(x)|\le r \, |g(x)|\}$ n'est pas vide, alors chacune de ses composantes connexes contient une composante connexe de~$V(f)$ ou un point du bord de Shilov de~$X$ en lequel ni~$f$ ni~$g$ ne s'annule.
\end{prop}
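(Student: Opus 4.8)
The plan is to reduce the statement to one clean assertion and then drive everything by the maximum modulus principle. Write $W=\{x\in X\mid |f(x)|\le r\,|g(x)|\}$. Two elementary remarks, both forced by $(f,g)=(1)$, come first. Since $f(x)=0$ gives $|f(x)|=0\le r\,|g(x)|$, the Zariski-closed set $V(f)$ is contained in $W$; and since a common zero of $f$ and $g$ is excluded, $g$ vanishes nowhere on $W$. Hence for a connected component $C$ of $W$ it suffices to prove that $C$ meets $\Gamma(X)$ or meets $V(f)$: a point $\xi\in C\cap\Gamma(X)$ has $g(\xi)\neq 0$ automatically, so either $f(\xi)\neq 0$, giving a Shilov point where neither function vanishes, or $f(\xi)=0$, in which case the connected component of $V(f)$ through $\xi$ is a connected subset of $W$ and so lies in $C$; and a point of $C\cap V(f)$ pins down a whole component of $V(f)$ inside $C$ for the same reason.

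Assume then, for contradiction, that $C$ meets neither $\Gamma(X)$ nor $V(f)$. Both $f$ and $g$ are invertible on the compact affinoid $C$, so $u:=f/g\in\Os(C)^\times$ satisfies $|u|\le r$ on $C$. The crux is to show that $|u|$ is in fact constant on $C$. I would set $m=\min_{C}|u|>0$ and $A=\{x\in C\mid |u(x)|=m\}$, a nonempty closed subset of $C$. Fix $x\in A$ and choose an open neighbourhood $\Omega$ of $x$ in $X$ on which $g$ is invertible (so $u$ is analytic there) and small enough that $\Omega\cap W=\Omega\cap C$; this is possible because the finitely many components of the affinoid $W$ are open in $W$. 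Every point of $\Omega$ then lies in $C$, where $|u|\ge m$, or outside $W$, where $|u|>r\ge m$, so $|u|\ge m$ throughout $\Omega$ and $|1/u|$ attains a local maximum at $x$. As $x\in C$ and $C$ avoids $\Gamma(X)=\partial(X/k)$, the point $x$ is interior, and the maximum modulus principle forces $|u|\equiv m$ in a neighbourhood of $x$. Thus $A$ is open in $C$, and by connectedness $A=C$, i.e. $|u|\equiv m$ on $C$.

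These same neighbourhoods now show that $C$ is open in $X$: for every $x\in C$ the neighbourhood above has $|u|\equiv m\le r$, hence sits inside $\{|u|\le r\}\cap\Omega=W\cap\Omega=C\cap\Omega\subseteq C$. Being also compact, and therefore closed, $C$ is a union of connected components of $X$. Since the Shilov boundary of an affinoid space meets each of its connected components, we obtain $C\cap\Gamma(X)\neq\emptyset$, contradicting the assumption and proving the proposition.

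I expect the constancy step to be the real obstacle. The inequality $|f|\le r\,|g|$ constrains $C$ only from one side, so a priori a component could lie entirely on the sphere $\{|f|=r\,|g|\}$ and avoid $\Gamma(X)$ altogether; nothing purely topological rules this out. What breaks the symmetry is the maximum modulus principle applied to $1/u$ at a point that is interior \emph{precisely because} it lies off the Shilov boundary, which turns such a degenerate configuration into the openness of $C$. The identification $\partial(X/k)=\Gamma(X)$ for $k$-affinoid $X$ is exactly what makes the local maximum principle available at every point of $C$, and is the hypothesis I would be most careful to invoke correctly.
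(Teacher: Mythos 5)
Your reduction to ``$C$ meets $\Gamma(X)\cup V(f)$'' (using that $V(f)\subset W$ and that $g$ has no zero on $W$), the separation $\Omega\cap W=\Omega\cap C$, and the endgame (openness of $C$, hence $C$ clopen, hence $C$ meets $\Gamma(X)$) are all sound, and you correctly single out the constancy step as the crux. But that step contains a genuine gap, twice over. First, the identification $\Gamma(X)=\partial(X/k)$ is false as soon as $\dim X\ge 2$ (it is a one-dimensional accident): for the bidisc $X=\Mc(k\{T_{1},T_{2}\})$ one has $\Gamma(X)=\{\eta\}$, a single point, whereas the Gauss point $x$ of the slice $\{T_{2}=0\}$ lies in $\partial(X/k)$, since $\ti{\chi}_{x}$ has image $\ti{k}[\ti{t}_{1}]$, which is not integral over $\ti{k}$. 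So ``$C$ avoids $\Gamma(X)$'' does not make the points of $C$ relatively interior. Second, and worse, the principle you invoke --- a local maximum of $|1/u|$ at a point off the Shilov boundary forces $|u|$ to be locally constant --- is simply false: in the same bidisc, $u=1/T_{1}$ is invertible on the open set $\{|T_{1}|>1/2\}$, the point $x$ above satisfies $|u(x)|=1=\min|u|$ (as $|T_{1}|\le 1$ everywhere) and $x\notin\Gamma(X)$, yet $|u|$ is constant on no neighbourhood of $x$: the points $\eta_{0,\rho}\times\{T_{2}=0\}$ converge to $x$ as $\rho\to 1^{-}$ and carry $|u|=\rho^{-1}>1$. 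The non-archimedean maximum principle says that the maximum over an affinoid is \emph{attained} on the Shilov boundary; it does not convert a local maximum into local constancy, and Berkovich's proposition~2.5.20 --- the ``principe du maximum'' actually available --- is the different statement that a Shilov point of an affinoid domain lying in the topological interior of that domain belongs to $\Gamma(X)$. Nothing in the extra structure of your $\Omega$ (the jump of $|u|$ across $r$) is used to rescue the inference, so the proof breaks exactly at the sphere-degeneracy you yourself flagged.

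For comparison, the paper's proof shows the standard way around that degeneracy: using the lemma preceding the proposition, it enlarges $r$ to some $s>r$ so that $C=D\cap\{|f|\le r\,|g|\}$ for a connected component $D$ of $\{|f|\le s\,|g|\}$ on which $f$ does not vanish; then $h=g/f$ is analytic on the affinoid domain $D$, a point $\gamma\in\Gamma(D)$ maximizing $|h|$ satisfies $|h(\gamma)|\ge r^{-1}$, hence lies in $C$, hence in the topological interior of $D$ (because $|f(\gamma)|\le r\,|g(\gamma)|<s\,|g(\gamma)|$), and proposition~2.5.20 then places $\gamma$ in $\Gamma(X)$. In short: instead of applying a (nonexistent) local maximum principle at an arbitrary minimizing point of $|u|$, one applies the Shilov-boundary statement at a maximizing point of a slightly enlarged domain, the slack $s>r$ being precisely what guarantees interiority. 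Your argument cannot be repaired without this, or an equivalent, idea.
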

\begin{proof}
Supposons que $\{x\in X \mid |f(x)|\le r\, |g(x)|\}$ n'est pas vide. Soit~$C$ l'une de ses composantes connexes. Si elle coupe~$V(f)$, alors elle contient l'une de ses composantes connexes.

Supposons donc que~$f$ ne s'annule pas sur~$C$. D'apr\`es le lemme pr\'ec\'edent, il existe $s > r$ et une composante connexe~$D$ de $\{x\in X \mid |f(x)|\le s\,|g(x)|\}$ telle que
\[C = D \cap \{x\in X \mid |f(x)|\le r\,|g(x)|\}.\]
Nous pouvons en outre choisir~$s$ de sorte que~$f$ ne s'annule pas sur~$D$. La partie~$D$ est alors un domaine affino\"{\i}de de~$X$ sur laquelle $g/f$ d\'efinit une fonction analytique~$h$. Consid\'erons un point~$\gamma$ du bord de Shilov de~$D$ en lequel la valeur absolue de~$h$ est maximale. Nous avons alors $|h(\gamma)|\ge r^{-1}$ et le point~$\gamma$ appartient \`a~$C$, donc \`a l'int\'erieur de~$D$. D'apr\`es le principe du maximum~\cite[proposition~2.5.20]{rouge}, c'est un point du bord de Shilov de~$X$.
\end{proof}

\begin{cor}\label{cor:ccbordavecannulation}
Soient $f,g \in \As$ et $r\ge 0$. Notons~$V(f)$ le ferm\'e de Zariski de~$X$ d\'efini par~$f$. Si $\{x\in X \mid |f(x)|\le r \, |g(x)|\}$ n'est pas vide, alors chacune de ses composantes connexes contient une composante connexe de~$V(f)$ ou un point du bord de Shilov de~$X$ en lequel ni~$f$ ni~$g$ ne s'annule.
\end{cor}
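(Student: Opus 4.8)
The plan is to derive this from Proposition~\ref{prop:ccbord} by shedding its two extra hypotheses, $r>0$ and $(f,g)=(1)$, the second by localising so that $f$ becomes invertible. The case $r=0$ is immediate: then $\{x\in X \mid |f(x)|\le r\,|g(x)|\}$ equals the zero locus $V(f)$, so each of its connected components is already a connected component of $V(f)$. Assume henceforth $r>0$, write $V=\{x\in X \mid |f(x)|\le r\,|g(x)|\}$ (which we may take non-empty), and let $C$ be one of its connected components.

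If $C$ meets $V(f)$, I would argue exactly as in the first lines of the proof of Proposition~\ref{prop:ccbord}, a step that uses no hypothesis on the pair $(f,g)$: since $|f|=0\le r\,|g|$ on $V(f)$ one has $V(f)\subseteq V$, so the connected component of $V(f)$ through a point of $C\cap V(f)$ is a connected subset of $V$ meeting $C$, hence contained in $C$.

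The substance lies in the remaining case, where $f$ does not vanish on $C$. Then $g$ does not vanish on $C$ either, for $g(x)=0$ would force $|f(x)|\le 0$, i.e.\ $x\in V(f)$. As $C$ is compact and $f$ is non-vanishing on it, $m=\min_{x\in C}|f(x)|>0$, and I would pass to the Laurent domain $X'=\{x\in X \mid |f(x)|\ge m/2\}$. On $X'$ the function $f$ is invertible, so $(f,g)=(1)$ there, and $C\subseteq\{|f|>m/2\}\subseteq\Int_X(X')$. Moreover $C$ is still a connected component of $V\cap X'=\{x\in X' \mid |f(x)|\le r\,|g(x)|\}$, since any connected subset of $V\cap X'\subseteq V$ containing $C$ lies in $C$. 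As $V(f)\cap X'=\varnothing$, Proposition~\ref{prop:ccbord} applied to the affinoid $X'$ (with the same $f$, $g$ and $r$) produces a point $\gamma\in C$ in the Shilov boundary of $X'$ at which neither $f$ nor $g$ vanishes.

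The main obstacle is to upgrade $\gamma$ from a Shilov point of $X'$ to a Shilov point of $X$, and this is precisely what the choice of $X'$ secures: since $\gamma\in C\subseteq\{|f|>m/2\}$, the point $\gamma$ lies in the interior of the affinoid domain $X'$, so, exactly as at the end of the proof of Proposition~\ref{prop:ccbord}, the maximum principle~\cite[proposition~2.5.20]{rouge} shows that $\gamma$ belongs to the Shilov boundary of $X$. As $f(\gamma)\ne 0$ and $g(\gamma)\ne 0$, this is the required point. The one technical detail to keep in mind is that, over an arbitrary complete field $k$, the set $\{|f|\ge m/2\}$ is a genuine (non-strict) affinoid domain, the radii bounding Laurent domains being allowed to be arbitrary positive reals.
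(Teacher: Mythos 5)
Your proof is correct, and although it rests on the same two ingredients as the paper --- Proposition~\ref{prop:ccbord} and the maximum principle \cite[proposition~2.5.20]{rouge} --- it deploys them along a genuinely different route. The paper localizes at~$g$ and argues by contradiction: assuming~$C$ contains no suitable Shilov point of~$X$, it applies the proposition to the whole family $X_{\eps}=\{x\in X \mid |g(x)|\ge \eps\}$, uses the maximum principle to force the resulting Shilov points of~$X_{\eps}$ onto the locus $|g|=\eps$, and then lets $\eps$ tend to~$0$, via compactness of~$C$, to conclude that~$g$ (hence~$f$) vanishes on~$C$. You localize at~$f$ instead, on a \emph{single} domain $X'=\{x\in X \mid |f(x)|\ge m/2\}$ with $m=\min_{C}|f|>0$: there~$f$ is invertible, so $(f,g)=(1)$ holds, the branch \og composante connexe de~$V(f)$ \fg{} of the dichotomy is empty by construction, and since $C\subseteq\{|f|>m/2\}\subseteq \Int_{X}(X')$ the maximum principle upgrades the Shilov point of~$X'$ to one of~$X$ in one step, with no limiting argument. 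Your intermediate verifications are all sound: $C$ remains a connected component of $V\cap X'$ by maximality in~$V$, $f$ is invertible on the (non-strict, as you rightly flag) Laurent domain~$X'$, and $g$ cannot vanish on~$C$ once~$f$ does not. Your version even gains slightly in precision at two points: you treat $r=0$ explicitly (the paper's proof invokes the proposition, stated only for $r>0$, without comment; of course $V=V(f)$ there, as you observe), and your opening split \og either $C$ meets $V(f)$, or $f$ does not vanish on~$C$ \fg{} explicitly disposes of the branch in which the proposition would return a component of $V(f)\cap X_{\eps}$ inside~$C$ --- a case the paper's phrasing passes over tacitly (harmlessly so, since a zero of~$f$ in~$C$ already places a full component of~$V(f)$ inside~$C$, because $V(f)\subseteq V$; but your case analysis makes this visible). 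What the paper's $\eps$-family buys in exchange is a proof written as a single contrapositive sweep; what yours buys is a shorter, direct argument with one localization and no compactness limit.
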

\begin{proof}
Supposons que $\{x\in X \mid |f(x)|\le r\, |g(x)|\}$ n'est pas vide. Soit~$C$ l'une de ses composantes connexes et supposons qu'elle ne contient aucun point du bord de Shilov de~$X$ en lequel ni~$f$ ni~$g$ ne s'annule.

Nous allons montrer que la fonction~$g$ s'annule sur~$C$. En effet, sinon, pour tout~$\eps>0$ assez petit, la proposition pr\'ec\'edente appliqu\'ee \`a l'espace $X_{\eps} = \{x\in X \mid |g(x)|\ge \eps\}$ assure que~$C$ contient un point~$y$ du bord de Shilov de~$X_{\eps}$ en lequel ni~$f$ ni~$g$ ne s'annule. Par hypoth\`ese, ce point ne peut appartenir au bord de Shilov de~$X$. D'apr\`es le principe du maximum~\cite[proposition~2.5.20]{rouge}, il appartient donc au bord de~$X_{\eps}$ dans~$X$ et nous avons donc $|g(y)|=\eps$. En utilisant la compacit\'e de~$C$, on montre alors que~$g$ s'annule sur~$C$.

On en d\'eduit que~$f$ s'annule \'egalement sur~$C$ et donc que~$C$ contient une composante connexe de~$V(f)$.
\end{proof}

\begin{cor}\label{cor:surjectifintervalle}
Soient $f,g \in \As$. Posons
\[|f/g(\Gamma(X))^\ast| = \{|f(x)|/|g(x)| \mid x\in\Gamma(X), f(x)\ne 0, g(x)\ne 0\}.\]
C'est un sous-ensemble fini de~$\R_{+}^\ast$. Soient~$a < b \in \R_{+}$ tels que $\of{]}{a,b}{[} \cap |f/g(\Gamma(X))^\ast| = \emptyset$. Alors, pour tous $r\le s \in \of{[}{a,b}{[}$, l'application naturelle
\[\pi_{0}(\{x\in X \mid |f(x)|\le r\,|g(x)|\}) \to \pi_{0}(\{x\in X \mid |f(x)|\le s\,|g(x)|\})\]
est surjective. En particulier, l'application
\[r \in \of{[}{a,b}{[} \mapsto \Card(\pi_{0}(\{x\in X \mid |f(x)|\le r\, |g(x)| \}))\]
est d\'ecroissante.
\end{cor}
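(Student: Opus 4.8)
The plan is to establish the surjectivity of each natural map, from which the final monotonicity statement is immediate, since the mere existence of a surjection forces $\Card(\pi_{0}(\{|f|\le r\,|g|\})) \ge \Card(\pi_{0}(\{|f|\le s\,|g|\}))$. Throughout I write $V_{t} = \{x\in X \mid |f(x)|\le t\,|g(x)|\}$ for $t\ge 0$, so that $V_{r}\subseteq V_{s}$ whenever $r\le s$ and the natural map sends a connected component of $V_{r}$ to the unique component of $V_{s}$ containing it. The finiteness of $|f/g(\Gamma(X))^\ast|$ requires no work: the Shilov boundary $\Gamma(X)$ of a $k$-affino\"{\i}de space is a finite set, and $|f/g(\Gamma(X))^\ast|$ is the image, under $x\mapsto |f(x)|/|g(x)|$, of the finite set of points of $\Gamma(X)$ at which both $f$ and $g$ are nonzero.

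Fix now $r\le s$ in $\of{[}{a,b}{[}$. To prove surjectivity it suffices to show that every connected component $D$ of $V_{s}$ meets $V_{r}$: indeed, a point $x\in D\cap V_{r}$ lies in some component $C$ of $V_{r}$, and since $C$ is connected, contained in $V_{s}$ and meets $D$, it is contained in $D$, so $C$ is sent to $D$.

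The crux is therefore to exhibit a point of $D\cap V_{r}$, and here I would apply Corollaire~\ref{cor:ccbordavecannulation} to the set $V_{s}$. It guarantees that $D$ contains either a connected component of $V(f)$ or a point $\gamma$ of the Shilov boundary of $X$ at which neither $f$ nor $g$ vanishes. In the first case there is nothing to prove, since $V(f)\subseteq V_{r}$ (one has $|f|=0\le r\,|g|$ on $V(f)$), so $D$ already meets $V_{r}$. In the second case I would use the gap hypothesis: as $\gamma\in D\subseteq V_{s}$ and $g(\gamma)\ne 0$, the ratio $|f(\gamma)|/|g(\gamma)|$ belongs to $|f/g(\Gamma(X))^\ast|$ and satisfies $|f(\gamma)|/|g(\gamma)|\le s<b$. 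Since $|f/g(\Gamma(X))^\ast|$ does not meet $\of{]}{a,b}{[}$, this ratio cannot lie strictly between $a$ and $b$; being $<b$, it must be $\le a\le r$, whence $\gamma\in V_{r}$. In both cases $D\cap V_{r}\ne\emptyset$, which concludes the argument.

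The proof is short once Corollaire~\ref{cor:ccbordavecannulation} is in hand, and there is no serious obstacle; the only point demanding care is the elementary dichotomy on the value $|f(\gamma)|/|g(\gamma)|$. Its role makes transparent the meaning of the statement: the finitely many ratios read off the Shilov boundary are the only thresholds at which the number of connected components can drop, and the hypothesis $\of{]}{a,b}{[}\cap|f/g(\Gamma(X))^\ast|=\emptyset$ is exactly what prevents such a threshold from lying between $r$ and $s$.
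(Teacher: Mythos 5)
Your proof is correct and coincides with the paper's own (implicit) argument: the corollary appears there with no separate proof, as an immediate consequence of Corollaire~\ref{cor:ccbordavecannulation}, and your dichotomy --- a connected component of $V(f)$, which lies in $\{x\in X \mid |f(x)|\le r\,|g(x)|\}$ for every $r\ge 0$, or a Shilov boundary point whose ratio $|f|/|g|$, being at most $s<b$ and avoiding $\of{]}{a,b}{[}$, is at most $a\le r$ --- is precisely that intended derivation. Nothing further is needed.
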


On peut d\'ecliner ce corollaire sous diff\'erentes formes, en faisant, par exemple, intervenir des sch\'emas formels. Nous ignorons si le r\'esultat suivant est connu.

\begin{cor}
Soit~$\Xk$ un $k^\circ$-sch\'ema formel affine admissible dont la fibre sp\'eciale~$\Xk_{s}$ est r\'eduite. Notons~$\Xk_{\eta}$ sa fibre g\'en\'erique. Soit~$g$ un \'el\'ement de~$\Os(\Xk)$ qui ne s'annule sur aucune composante irr\'eductible de~$\Xk_{s}$. Alors, pour tous $s\le r \in \of{[}{0,1}{]}$, l'application naturelle
\[\pi_{0}(\{x\in \Xk_{\eta} \mid |g(x)|\ge r\}) \to \pi_{0}(\{x\in \Xk_{\eta} \mid |g(x)|\ge s\})\]
est surjective.
\end{cor}
\begin{proof}
Sous les hypoth\`eses de l'\'enonc\'e, la fibre g\'en\'erique~$\Xk_{\eta}$ est un espace affino\"{\i}de et l'application de r\'eduction $\Xk_{\eta} \to \Xk_{s}$ co\"{\i}ncide avec celle induite par la norme spectrale sur~$\Xk_{\eta}$ (\cf~\cite[proposition~1.1]{FIV}). En outre, en tout point du bord de Shilov de~$\Xk_{\eta}$, on a $|g(\gamma)|=1$. On peut alors appliquer le corollaire qui pr\'ec\`ede.
\end{proof}

Nous pouvons \'egalement d\'ej\`a d\'emontrer une partie du th\'eor\`eme~\ref{thm:partition} pr\'esent\'e en introduction.

\begin{cor}\label{cor:partitionsansrhoAs}
Soient $f,g \in \As$. Il existe une partition finie~$\Pc$ de~$\R_{+}$ de la forme
\[\Pc = \{\of{[}{0,a_{0}}{[}, \of{[}{a_{0},a_{1}}{[},\dotsc, \of{[}{a_{p-1},a_{p}}{[}, \of{[}{a_{p},+\infty}{[}\}\] 
satisfaisant la condition suivante~: quel que soit $I\in \Pc$ et quels que soient $r\le s \in I$, l'application naturelle
\[\pi_{0}(\{x\in X \mid |f(x)|\le r\, |g(x)|\}) \to \pi_{0}(\{x\in X \mid |f(x)|\le s\, |g(x)|\})\]
est bijective.

Le m\^eme r\'esultat vaut en rempla\c{c}ant l'ensemble des composantes par l'ensemble des composantes connexes g\'eom\'etriques, l'ensemble des composantes irr\'eductibles ou l'ensemble des composantes irr\'eductibles g\'eom\'etriques.
\end{cor}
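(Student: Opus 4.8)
Le plan est de ramener l'\'enonc\'e \`a une propri\'et\'e de la fonction de comptage $N(r) = \Card(\pi_{0}(V_{r}))$, o\`u $V_{r} = \{x\in X \mid |f(x)|\le r\,|g(x)|\}$. Je traiterais d'abord les composantes connexes, les trois autres notions s'en d\'eduisant par les m\^emes arguments --- pour les variantes g\'eom\'etriques, apr\`es extension des scalaires \`a une extension compl\`ete alg\'ebriquement close, ce qui ram\`ene les composantes connexes g\'eom\'etriques sur~$X$ aux composantes connexes ordinaires de l'espace affino\"{\i}de obtenu, et pour les composantes irr\'eductibles en invoquant les variantes imm\'ediates des \'enonc\'es pr\'eliminaires. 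Posons $S = |f/g(\Gamma(X))^\ast|$~; c'est une partie finie de~$\R_{+}^\ast$ d'apr\`es le corollaire~\ref{cor:surjectifintervalle}, qui assure de plus que~$N(r)$ est fini et que~$N$ est d\'ecroissante sur tout intervalle dont l'int\'erieur \'evite~$S$.

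L'ingr\'edient-cl\'e serait de montrer que~$N$ est localement constante \`a droite en tout point~$r\ge0$. Le lemme~\ref{lem:injectif} fournit un $\eps>0$ tel que $\pi_{0}(V_{r}) \to \pi_{0}(V_{s})$ soit injective pour $s\in\of{[}{r,r+\eps}{]}$, d'o\`u $N(r)\le N(s)$~; quitte \`a diminuer~$\eps$ pour que $\of{]}{r,r+\eps}{[}$ \'evite~$S$, le corollaire~\ref{cor:surjectifintervalle} donne la surjectivit\'e de cette m\^eme application, d'o\`u $N(r)\ge N(s)$. Ainsi~$N$ est constante sur $\of{[}{r,r+\eps}{[}$, donc continue \`a droite, et toutes ses discontinuit\'es sont \`a gauche.

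Il resterait \`a contr\^oler le nombre de ces discontinuit\'es. Le corollaire~\ref{cor:ccbordavecannulation} borne~$N$ uniform\'ement~: toute composante connexe de~$V_{r}$ contient une composante connexe de~$V(f)$ ou un point du bord de Shilov de~$X$~; ces marqueurs, en nombre fini et distincts pour des composantes distinctes, donnent $N(r)\le C_{0}$ avec~$C_{0}$ ind\'ependant de~$r$. Comme~$S$ est fini et que~$N$ est d\'ecroissante et born\'ee sur chacun des intervalles d\'ecoup\'es par~$S$, elle n'y pr\'esente qu'un nombre fini de sauts~; l'ensemble~$P$ de ses points de discontinuit\'e sur~$\R_{+}$ est donc fini. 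En ordonnant $S\cup P = \{a_{0}<\dotsb<a_{p}\}$, on obtient une partition de la forme voulue. Sur chaque intervalle $\of{[}{a_{i},a_{i+1}}{[}$ (ainsi que sur $\of{[}{0,a_{0}}{[}$ et $\of{[}{a_{p},+\infty}{[}$), l'int\'erieur \'evite~$S$, donc les applications de transition sont surjectives, et~$N$ y est constante, l'\'eventuelle discontinuit\'e en~$a_{i}$ \'etant \`a gauche. Une surjection entre ensembles finis de m\^eme cardinal \'etant bijective, ces applications sont bijectives, ce qui conclut.

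Le point d\'elicat me semble r\'esider dans le fait que le lemme~\ref{lem:injectif} ne fournit l'injectivit\'e que localement \`a droite, le rayon~$\eps$ pouvant d\'eg\'en\'erer lorsqu'on approche un point par la gauche~; on ne peut donc pas esp\'erer l'injectivit\'e sur un intervalle entier \`a partir du seul lemme. Plut\^ot que de l'\'etablir directement, je contournerais la difficult\'e en absorbant les points de saut de~$N$, en nombre fini, dans la partition --- ce qui est loisible ici, o\`u l'on ne cherche pas encore \`a contr\^oler l'appartenance des bornes \`a~$\sqrt{\rho(\As)}$.
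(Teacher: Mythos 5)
Your proposal is correct and takes essentially the same route as the paper: a finite partition cut out by the finite set $|f/g(\Gamma(X))^\ast|$ on whose pieces Corollary~\ref{cor:surjectifintervalle} gives surjectivity, refined at the finitely many jumps of $N$ using the right-local injectivity of Lemma~\ref{lem:injectif}, with bijectivity then following because a surjection between finite sets of equal cardinality is bijective --- your uniform bound on $N$ via Corollary~\ref{cor:ccbordavecannulation} merely makes explicit a finiteness the paper leaves implicit. One small caveat: for the irreducible-component variants the paper does not invoke ``immediate variants'' of the preliminary lemmas (whose proofs are genuinely topological and do not transpose directly to irreducible components), but instead replaces $X$ by its normalization $\ti{X}$ (resp.\ by $X\ho_{k}k^{a}$ or $\ti{X}\ho_{k}k^{a}$ for the geometric versions), where the statement reduces to the connected-component case.
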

\begin{proof}
D'apr\`es le corollaire~\ref{cor:surjectifintervalle}, il existe une partition de la forme voulue sur les \'el\'ements de laquelle les applications entre~$\pi_{0}$ sont surjectives. Puisque le bord de Shilov de~$X$ est finie, cette partition est finie.

On raffine cette partition en une partition satisfaisant la propri\'et\'e requise \`a l'aide du lemme~\ref{lem:injectif}.

La derni\`ere partie du r\'esultat se d\'emontre en remplacant~$X$ par~$X\ho_{k} k^a$, o\`u~$k^a$ d\'esigne le compl\'et\'e d'une cl\^oture alg\'ebrique de~$k$, sa normalis\'ee~$\ti{X}$ ou $\ti{X}\ho_{k} k^a$.
\end{proof}

Nous tirons maintenant des cons\'equences de ce r\'esultat. Nous nous contenterons ici d'esquisser les preuves en renvoyant \`a~\cite{Compositio} pour de plus amples d\'etails.

\begin{cor}\label{cor:ggeepsirreductible}
Soit $g \in \As$. Si~$X$ est irr\'eductible, alors il existe~$r_{0}>0$ tel que, pour tout $r\in \of{]}{0,r_{0}}{]}$, le domaine affino\"{\i}de $\{x\in X \mid |g(x)|\ge r\}$ soit irr\'eductible.
\end{cor}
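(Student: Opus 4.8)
Le plan est de d\'eduire l'\'enonc\'e du corollaire~\ref{cor:partitionsansrhoAs}, appliqu\'e au couple~$(1,g)$, apr\`es une r\'eduction au cas des composantes connexes. On peut supposer~$g\ne 0$, faute de quoi les domaines $\{x\in X\mid |g(x)|\ge r\}$ sont tous vides pour $r>0$. Comme dans la preuve du corollaire~\ref{cor:partitionsansrhoAs}, je commencerais par me ramener \`a l'\'enonc\'e analogue portant sur les composantes connexes, en utilisant qu'un domaine affino\"{\i}de est irr\'eductible si et seulement si sa normalis\'ee est connexe, et que la normalisation commute \`a l'immersion du sous-domaine $\{|g|\ge r\}$. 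La normalis\'ee~$\ti{X}$ de l'espace irr\'eductible~$X$ \'etant connexe (donc elle-m\^eme irr\'eductible), je serais ainsi ramen\'e \`a d\'emontrer l'assertion suivante~: si~$X$ est irr\'eductible, alors le domaine $\{x\in X\mid |g(x)|\ge r\}$ est connexe pour tout~$r>0$ assez petit.

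Pour cette assertion, j'observerais d'abord que le corollaire~\ref{cor:partitionsansrhoAs}, appliqu\'e \`a~$(1,g)$ gr\^ace \`a l'\'egalit\'e $\{|g|\ge r\}=\{|1|\le r^{-1}\,|g|\}$, fournit un r\'eel~$r_{0}>0$ tel que, pour $0<r'\le r\le r_{0}$, l'application naturelle $\pi_{0}(\{|g|\ge r\})\to\pi_{0}(\{|g|\ge r'\})$ soit bijective~: l'intervalle non born\'e de la partition obtenue correspond en effet aux seuils~$r$ petits. Il resterait \`a identifier ce cardinal stable \`a~$1$, ce que j'obtiendrais en faisant tendre~$r$ vers~$0$. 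Les domaines $\{|g|\ge r\}$ forment un recouvrement exhaustif de l'ouvert $\{x\in X\mid g(x)\ne 0\}$, et chacun est contenu dans l'int\'erieur du suivant puisque $\{|g|\ge r\}\subseteq\{|g|>r/2\}\subseteq\{|g|\ge r/2\}$. Comme les espaces de Berkovich sont localement connexes par arcs, j'en d\'eduirais l'isomorphisme
\[\pi_{0}(\{x\in X\mid g(x)\ne 0\}) \simeq \varinjlim_{r\to 0}\, \pi_{0}(\{x\in X\mid |g(x)|\ge r\}).\]
Or $\{x\in X\mid g(x)\ne 0\}$ est un ouvert dense de l'espace irr\'eductible~$X$, donc connexe, de sorte que cette limite inductive est r\'eduite \`a un point. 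Les applications de transition \'etant bijectives pour $r\le r_{0}$, la limite s'identifie \`a $\pi_{0}(\{|g|\ge r\})$ pour chaque tel~$r$~; ce dernier est donc r\'eduit \`a un point, c'est-\`a-dire que $\{|g|\ge r\}$ est connexe.

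L'\'etape que je pr\'evois la plus d\'elicate est la justification de l'identification de~$\pi_{0}$ \`a la limite inductive~: il faut v\'erifier que l'exhaustion choisie est cofinale avec le recouvrement par les ouverts $\{|g|>r\}$, puis invoquer la connexit\'e locale par arcs pour qu'un chemin reliant deux points de $\{x\in X\mid g(x)\ne 0\}$ soit, par compacit\'e, contenu dans l'un des domaines $\{|g|\ge r\}$. La r\'eduction pr\'eliminaire aux composantes connexes r\'eclame elle aussi quelques v\'erifications, \`a savoir la compatibilit\'e de la normalisation \`a l'immersion du domaine affino\"{\i}de $\{|g|\ge r\}$ --- qui r\'esulte de la platitude des immersions de domaines --- et l'\'equivalence entre l'irr\'eductibilit\'e d'un domaine affino\"{\i}de et la connexit\'e de sa normalis\'ee. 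Le reste de l'argument ne pr\'esente pas de difficult\'e particuli\`ere.
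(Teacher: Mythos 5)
Your overall route matches the paper's more closely than you might expect: the paper likewise deduces from Corollary~\ref{cor:partitionsansrhoAs} that, for $r>0$ small, the inclusion $\{x\in X \mid |g(x)|\ge r\} \subset \{x\in X \mid g(x)\ne 0\}$ induces a bijection on components (your exhaustion/colimit argument is precisely the implicit content of that deduction), and your detour through the normalization is exactly how the paper itself obtains the irreducible-components version of Corollary~\ref{cor:partitionsansrhoAs}. But there is one genuine gap, and it sits at the decisive final step: you assert that $\{x\in X \mid g(x)\ne 0\}$ is connected \emph{because} it is a dense open subset of the irreducible space~$X$. That deduction is not valid: a dense open subset of a connected topological space can perfectly well be disconnected (remove an interior point from a real interval). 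Irreducibility of~$X$ gives connectedness of~$X$, but general topology alone says nothing about the complement of the zero locus of~$g$. What is true --- and what the paper invokes --- is the nontrivial theorem of Bartenwerfer and L\"utkebohmert (\cite{Bart}, \cite{Lu}) that the complement of a Zariski-closed subset of an irreducible affinoid space is irreducible, in particular connected; in your normalized setting, this is the statement that a connected normal affinoid minus a nowhere dense Zariski-closed subset stays connected. Without citing or proving this, your limit $\varinjlim_{r\to 0}\pi_{0}(\{x\in X \mid |g(x)|\ge r\})$ has no reason to consist of a single point, and the argument collapses exactly where its substance should lie.

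A secondary, repairable imprecision: the compatibility of normalization with passage to the affinoid subdomain $\{x\in X \mid |g(x)|\ge r\}$ does not \og r\'esulte[r] de la platitude des immersions de domaines \fg. Normalization does not commute with flat base change in general; what one needs is that $\As\to\As_{V}$ is a \emph{regular} morphism (flat with geometrically regular fibres), which holds by excellence of affinoid algebras --- this is the content of Conrad's treatment of irreducible components of rigid spaces. With that reference, and with \cite{Bart} and \cite{Lu} supplying the connectedness of $\{x\in X \mid g(x)\ne 0\}$, your proof becomes correct and amounts to the paper's own proof written out in greater detail.
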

\begin{proof}
On d\'eduit du corollaire~\ref{cor:partitionsansrhoAs} que, pour~$r>0$ assez proche de~0, l'inclusion
\[\{x\in X \mid |g(x)|\ge r\} \subset \{x\in X \mid g(x) \ne 0\}\]
induit une bijection entre les ensembles de composantes irr\'eductibles. Or, d'apr\`es~\cite{Bart} et~\cite{Lu}, l'ensemble de droite est irr\'eductible.
\end{proof}

\begin{cor}
Un point d'un bon espace $k$-analytique en lequel l'anneau local est int\`egre poss\`ede une base de voisinages affino\"{\i}des irr\'eductibles.
\end{cor}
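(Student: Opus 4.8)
The plan is to reduce the statement to a purely local question about the Henselian local ring $\Os_{X,x}$ and then to transport irreducibility through a normalisation. Since $X$ is good, the affinoid neighbourhoods of~$x$ are cofinal among all neighbourhoods; it therefore suffices to produce, inside an arbitrary affinoid neighbourhood $V=\Mc(\As)$ of~$x$, an irreducible affinoid neighbourhood of~$x$. As $\Os_{X,x}$ is a domain it is in particular reduced; since affinoid algebras are excellent, the non-reduced locus of~$V$ is Zariski-closed and avoids~$x$, so after shrinking~$V$ we may assume that $\As$ is reduced.

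Next I would record the key local input. The first step is to check that $x$ lies on a single irreducible component of every reduced affinoid neighbourhood: if $x$ belonged to two distinct components $Z_{1},Z_{2}$, one could choose $f$ vanishing on every component except~$Z_{1}$ and $g$ vanishing on~$Z_{1}$ but not on~$Z_{2}$; then $fg=0$ in~$\As$, whereas the identity principle on the irreducible reduced spaces $Z_{1}$ and~$Z_{2}$ shows that neither~$f$ nor~$g$ vanishes in $\Os_{X,x}$, contradicting integrality. The decisive point, however, is that $\Os_{X,x}$ is \emph{Henselian}: a finite algebra over a Henselian local ring is a product of local rings, so the integral closure of the Henselian local domain $\Os_{X,x}$ in its fraction field is again local. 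In other words $x$ is unibranch, and the finite normalisation $\pi\colon Y\to V$ (with~$Y$ normal) satisfies $\pi^{-1}(x)=\{x'\}$, a single point. This upgrade from \og int\`egre \fg to \og unibranche \fg is exactly what fails in the algebraic category --- the local ring of a nodal curve at its node is an algebraic domain --- and is the heart of the argument; it is available here precisely because the local rings of Berkovich spaces are Henselian.

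Finally I would transport irreducibility downward. The space~$Y$ is normal, hence its connected affinoid subdomains are irreducible; moreover normality and normalisation are compatible with passing to affinoid subdomains, so that for an affinoid neighbourhood $W\subseteq V$ of~$x$ the preimage $\pi^{-1}(W)$ is the normalisation of~$W$. Consequently $W$ is irreducible as soon as $\pi^{-1}(W)$ is connected. Using that $\pi$ is finite, that $\pi^{-1}(x)$ is the single point~$x'$, and that Berkovich spaces are locally connected, one obtains a basis of affinoid neighbourhoods~$W$ of~$x$ whose preimages $\pi^{-1}(W)$ are connected and normal, hence irreducible; each such~$W$ is then reduced and irreducible, and these~$W$ form the desired basis.

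I expect the main obstacle to be the passage from integrality to the unibranch property, that is, the careful use of Henselianity of $\Os_{X,x}$, together with the technical point that normalisation commutes with the affinoid subdomain inclusions used at the end --- the latter being what guarantees that $\pi^{-1}(W)$ really is the normalisation of~$W$, and hence that its connectedness forces~$W$ to be irreducible.
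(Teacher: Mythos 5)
Your preliminary reductions are all sound: shrinking to a reduced affinoid $V$, observing that $x$ lies on a single irreducible component of every such $V$, and the Henselian argument (the integral closure of a Henselian local domain in its fraction field is local, so $x$ is unibranch and the normalisation $\pi\colon Y\to V$ satisfies $\pi^{-1}(x)=\{x'\}$) are correct, granted the nontrivial standard inputs you implicitly invoke (Henselianity of $\Os_{X,x}$, excellence/finiteness of normalisation, and its compatibility with affinoid subdomains and stalks). The genuine gap is the final step. Since normalisation commutes with subdomains and $W$ is reduced, the connected components of $\pi^{-1}(W)$ correspond exactly to the irreducible components of~$W$; hence the claim \emph{there is a basis of affinoid neighbourhoods $W$ of $x$ with $\pi^{-1}(W)$ connected} is literally equivalent to the corollary you are proving, and it does not follow from finiteness of~$\pi$, from $\pi^{-1}(x)=\{x'\}$, and from local connectedness. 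Those facts give only this: the sets $\pi^{-1}(W)$ form a fundamental system of neighbourhoods of~$x'$, each with finitely many components, the one through~$x'$ being a neighbourhood of~$x'$. The superfluous components $T_{1},\dotsc,T_{m}$ have Zariski-closed images avoiding~$x$, so you may shrink $W$ into $W\setminus\bigcup_{i\ge 1}\pi(T_{i})$; but the new, smaller affinoid may acquire \emph{new} superfluous components (Zariski-closedness is relative to the ambient affinoid), nothing forces this iteration to terminate, and no compactness argument applies because the junk components are not nested as $W$ shrinks (a component disconnected from $x'$ in a small preimage may reconnect to $x'$ in a larger one). In other words, your unibranch observation identifies the unique branch through~$x$ but gives no control whatsoever on the components of small neighbourhoods away from~$x$, which is where the whole difficulty sits.

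The paper closes exactly this gap by a different route, with no normalisation and no Henselianity: it reduces to the case of a point $x$ lying on a single irreducible component~$Z$ of an affinoid~$X$, chooses $g$ vanishing identically on every component other than~$Z$ with $g(x)\ne 0$, and observes that the neighbourhoods $\{|g|\ge\eps\}$ are contained in~$Z$ and are irreducible for all $\eps>0$ small enough, by Corollaire~\ref{cor:ggeepsirreductible}. That corollary is where the real work lies: it follows from Corollaire~\ref{cor:partitionsansrhoAs} (the Shilov-boundary and maximum-principle variation results of Section~1), which identifies the irreducible components of $\{|g|\ge\eps\}$ with those of $\{g\ne 0\}$ for small~$\eps$, combined with the theorem of Bartenwerfer and L\"utkebohmert that $\{g\ne 0\}$ is irreducible when $X$ is. If you want to salvage your normalisation approach, you would need an input of comparable strength at the last step --- for instance this same variation theorem, or a tube-connectedness statement as in Section~2 of the paper --- rather than the soft topological argument you sketch.
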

\begin{proof}
On se ram\`ene imm\'ediatement \`a montrer qu'un point~$x$ situ\'e sur une seule composante irr\'eductible~$Z$ d'un espace $k$-affino\"{\i}de~$X$ poss\`ede un voisinage affino\"{\i}de irr\'eductible. Consid\'erons une fonction $g\in \Os(X)$ qui ne s'annule pas en~$x$, mais s'annule identiquement sur toute composante irr\'eductible de~$X$ diff\'erente de~$Z$. Le corollaire pr\'ec\'edent assure alors que le domaine affino\"{\i}de $\{x\in X \mid g(x) \ge \eps\}$ est irr\'eductible pour~$\eps>0$ assez petit, ce qui permet de conclure.
\end{proof}

\section{Connexit\'e des tubes}

Fixons un espace $k$-affino\"{\i}de $X=\Mc(\As)$. Notons~$\ti{X}$ sa r\'eduction et $\red \colon X \to \ti{X}$ l'application de r\'eduction.

Un r\'esultat classique de S.~Bosch permet d'obtenir des parties connexes de~$X$ en utilisant cette application.

\begin{thm}[S.~Bosch, \protect{\cite[Satz~6.1]{Bosch}}]\label{thm:Boschoriginal}
Supposons que la valuation de~$k$ n'est pas triviale et que~$X$ est strictement $k$-affino\"{\i}de, distingu\'e et \'equidimensionnel. Alors, pour tout point ferm\'e~$\ti x$ de~$\ti X$, le tube $\red^{-1}(\ti x)$ est connexe.
\end{thm}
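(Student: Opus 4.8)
The plan is to reduce the statement, by Noether normalization, to an analysis of a finite cover of an open polydisc, and then to extract connectedness from the reducedness of the special fibre. First I would translate the hypotheses into commutative algebra. Since $X$ is distinguished, the reduction $\ti{\As}$ is a reduced finitely generated $\ti{k}$-algebra, and equidimensionality makes it pure of some dimension $d$. Applying Noether normalization, I would choose a finite injection $\ti{k}[\ti{t}_{1},\dotsc,\ti{t}_{d}] \hookrightarrow \ti{\As}$ and lift the $\ti{t}_{i}$ to power-bounded elements $t_{i}\in\As^{\circ}$. These define a bounded homomorphism $k\la S_{1},\dotsc,S_{d}\ra \to \As$, that is, a morphism $\phi\colon X \to \mathbf{B}^{d} := \Mc(k\la S_{1},\dotsc,S_{d}\ra)$ to the closed unit polydisc. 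The crucial consequence of distinguishedness is that $\phi$ is finite and that the induced map on reductions $\ti{\phi}\colon \ti{X} \to \A^{d}_{\ti{k}}$ is precisely the chosen Noether normalization.

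Second, I would reduce the tube to a finite cover of a polydisc. Set $\ti{y} := \ti{\phi}(\ti{x})$, a closed point of $\A^{d}_{\ti{k}}$, and let $D := \red^{-1}(\ti{y}) \subset \mathbf{B}^{d}$ be its tube, which is an open polydisc and in particular connected. By functoriality of the reduction map, every point of $\phi^{-1}(D)$ reduces into the finite fibre $\ti{\phi}^{-1}(\ti{y}) = \{\ti{x}_{1},\dotsc,\ti{x}_{m}\}$ (with $\ti{x}_{1} = \ti{x}$), whose points are all closed; since tubes over closed points are open, one obtains a decomposition $\phi^{-1}(D) = \bigsqcup_{i} \red^{-1}(\ti{x}_{i})$ into open and closed pieces. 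Thus $\red^{-1}(\ti{x})$ is a union of connected components of $\phi^{-1}(D)$, and it remains only to count the components of this finite cover.

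Third, I would exhaust $D$ by the closed polydiscs $D_{\rho}$ of polyradius $\rho \nearrow 1$, centred at a lift of $\ti{y}$ and with $\rho \in |k^{\ast}|$, so that the $X_{\rho} := \phi^{-1}(D_{\rho})$ are strictly $k$-affinoid domains, finite over $D_{\rho}$, and $\red^{-1}(\ti{x}) = \bigcup_{\rho}\bigl(X_{\rho}\cap\red^{-1}(\ti{x})\bigr)$ is an increasing union of affinoid domains. It then suffices to prove that, for $\rho$ close to $1$, the affinoid $X_{\rho}$ has exactly $m$ connected components, one meeting each $\red^{-1}(\ti{x}_{i})$: granting this, each $X_{\rho}\cap\red^{-1}(\ti{x})$ is connected and $\red^{-1}(\ti{x})$ is connected as an increasing union of connected sets. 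The decisive tool here is that, for a distinguished strictly affinoid algebra, reduction induces a bijection on idempotents, so that the number of connected components of $X_{\rho}$ can be read off from its reduction scheme.

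I expect the main obstacle to be exactly this last point, namely showing that for $\rho$ near $1$ the reduction of $X_{\rho}$ is reduced and has precisely $m$ connected components; concretely, that zooming into the tube of radius $\rho$ separates the $m$ points of $\ti{\phi}^{-1}(\ti{y})$ without creating nilpotents. This is where the reducedness and equidimensionality of $\ti{X}$ must be propagated through the Weierstrass localizations defining the $X_{\rho}$, and where a suitably generic choice of radius $\rho$ is needed. By contrast, the remaining ingredients — the lifting of Noether normalization under the distinguished hypothesis, the openness of tubes over closed points, and the idempotent correspondence — are standard facts from the reduction theory of strictly $k$-affinoid algebras.
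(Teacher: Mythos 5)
Note first that the paper does not prove this statement at all: it is quoted from Bosch's article \cite[Satz~6.1]{Bosch} and used as a black box (the paper's own contribution starts afterwards, with Lemmas~\ref{lem:fibre} and~\ref{lem:staff}, which remove the hypotheses \og distingu\'e \fg{} and \og strictement affino\"{\i}de \fg). Your outline broadly follows the classical route of Bosch's original proof (Noether normalization of $\ti{\As}$ lifted to a finite morphism $\phi\colon X\to\mathbf{B}^d$, reduction of the tube to a finite cover of the tube of $\ti{y}$, idempotent/reduction correspondence), but it has a genuine gap at exactly the point you flag yourself: the claim that for $\rho$ close to $1$ the affinoid $X_{\rho}=\phi^{-1}(D_{\rho})$ has exactly $m$ connected components. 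This claim carries the entire content of the theorem, and your proposal offers no mechanism for it. Via the idempotent correspondence you would need $\#\pi_{0}$ of the reduction of $X_{\rho}$ to be at most $m$; but distinguishedness is not inherited by the Weierstrass domains $X_{\rho}$, their reductions change completely as $\rho$ varies (new components appear over the dilated coordinates near the boundary $|P_{j}|=\rho$), and no \og generic \fg{} choice of $\rho$ by itself makes the count come out. Indeed, controlling $\#\pi_{0}(X_{\rho})$ as $\rho\to 1$ is essentially a reformulation of the connectedness of the tube one is trying to prove, so as written the argument is circular at its decisive step; Bosch's proof does real work precisely here (analysis of the rings of functions on formal fibers and of the sup-seminorm on them, the Tate-algebra case treated first, and the fact that the reduction of an equidimensional distinguished algebra is again equidimensional --- which you also assert without justification).

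Two secondary inaccuracies should be repaired as well. The tube $D=\red^{-1}(\ti{y})\subset\mathbf{B}^d$ is an open polydisc only when $\ti{y}$ is $\ti{k}$-rational; for a general closed point there is no centre to lift, $D$ is cut out by $|P_{j}(x)|<1$ for lifts $P_{j}$ of generators of the maximal ideal of $\ti{y}$, and its connectedness is itself the $T_{d}$-case of the theorem --- provable by descent from $\hat{k^{a}}$ using surjectivity of reduction on fibers, exactly the kind of statement the paper establishes as Lemma~\ref{lem:fibre}, but it must be proved, not read off as a polydisc. Likewise your exhaustion by polydiscs \og centred at a lift of $\ti{y}$ \fg{} must be rephrased with the $P_{j}$. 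These points are fixable; the missing component count for $X_{\rho}$ is not, as the proposal stands.
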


Nous souhaitons g\'en\'eraliser ce r\'esultat en conservant uniquement l'hypoth\`ese d'\'equidimensionalit\'e. 

Nous renvoyons \`a~\cite[chapter~9]{rouge} ou~\cite{variationdimension} pour la notion de dimension $k$-analytique d'un espace $k$-affino\"{\i}de (ou plus g\'en\'eralement $k$-analytique, m\^eme si nous n'en aurons pas l'utilit\'e ici). Dans le cas strictement $k$-affino\"{\i}de, elle co\"{\i}ncide avec la dimension de Krull. Nous dirons que l'espace $k$-affino\"{\i}de~$X$ est \'equidimensionnel si toutes ses composantes irr\'eductibles ont la m\^eme dimension $k$-analytique. Dans ce cas, pour toute extension valu\'ee compl\`ete~$L$ de~$k$, l'espace $L$-affino\"{\i}de $X \ho_{k} L$ est encore \'equidimensionnel.

\begin{lem}
Soit~$K$ une extension finie de~$k$ munie de l'unique valeur absolue qui prolonge celle de~$k$. Soit~$L$ une extension valu\'ee compl\`ete et alg\'ebriquement close du corps~$k$. Alors tout morphisme $\ti{k}$-lin\'eaire $\ti{K} \to \ti{L}$ provient d'un morphisme $k$-lin\'eaire isom\'etrique $K \to L$.
\end{lem}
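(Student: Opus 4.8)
The plan is to reformulate the statement as a surjectivity property of a reduction map between sets of embeddings, and then to establish that surjectivity by Galois theory combined with the graded analogue of the classical fact that, over a complete field, the Galois group surjects onto the automorphisms of the residue data. First I would record two preliminary reductions. Since $k$ is complete and $K/k$ is finite, the absolute value on $K$ extending that of $k$ is unique; hence every $k$-linear embedding $K\to L$ is automatically isometric, and at least one such embedding exists because $L$, being algebraically closed, contains an algebraic closure of $k$. Thus the content of the lemma is exactly that the natural reduction map $\Hom_k(K,L)\to \Hom_{\ti k}^{\mathrm{gr}}(\ti K,\ti L)$ is surjective. Moreover, writing $K_0$ for the separable closure of $k$ in $K$, the extension $K/K_0$ is purely inseparable, so each $K_0$-embedding into the algebraically closed $L$ extends uniquely, and the same holds for the graded extension $\ti K/\ti{K_0}$ into the graded algebraically closed $\ti L$; these unique extensions are compatible with reduction, so one may assume $K/k$ separable.

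Next I would set up Galois theory. Choose a finite Galois extension $N/k$ containing $K$, set $G=\mathrm{Gal}(N/k)$ and $H=\mathrm{Gal}(N/K)$, and fix one isometric $k$-embedding $\iota\colon N\hookrightarrow L$. Every $k$-embedding $K\to L$ is of the form $\iota\circ\sigma|_K$ for some $\sigma\in G$, so after reduction every element in the image of the reduction map is of the form $\ti\iota\circ\ti\sigma|_{\ti K}$, and it suffices to prove that an arbitrary graded $\ti k$-embedding $\ti\phi\colon\ti K\to\ti L$ has this shape. Since $N/k$ is normal, its graded reduction $\ti N/\ti k$ is a normal graded extension; consequently the image of $\ti\phi$ lies in the normal closure of $\ti\iota(\ti K)$, which is contained in $\ti\iota(\ti N)$, so $\ti\phi$ extends to a graded $\ti k$-embedding $\ti N\to\ti L$, and by normality this extension equals $\ti\iota\circ\tau$ for a unique $\tau\in\mathrm{Aut}_{\ti k}^{\mathrm{gr}}(\ti N)$.

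The crux is then to show that the natural map $G\to \mathrm{Aut}_{\ti k}^{\mathrm{gr}}(\ti N)$, induced by the action of $G$ on $N$ by isometries and hence on $\ti N$ by graded $\ti k$-automorphisms, is surjective: once $\tau=\ti\sigma$ for some $\sigma\in G$, the embedding $\ti\phi=\ti\iota\circ\ti\sigma|_{\ti K}$ is the reduction of the $k$-embedding $(\iota\circ\sigma)|_K$, which finishes the proof. This surjectivity is the graded counterpart of the classical statement that, for a Galois extension of a complete (hence henselian) valued field, where the valuation extends uniquely so that the decomposition group is all of $G$, the homomorphism to the automorphisms of the residue field is onto. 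I expect this to be the main obstacle, precisely because the graded reduction records both the residue extension and the value-group part (while being insensitive to the wild and defect contributions), so one must produce, from suitable $\sigma\in G$, both the prescribed action on the residue field $\ti N_1$ and the prescribed twists on the higher graded pieces. I would derive it from Temkin's graded valuation theory, reducing to residue-field surjectivity on the degree-one part and to the behaviour of tamely ramified generators on the remaining graded pieces, the completeness of $k$ being exactly what guarantees a full decomposition group and thus makes the construction possible.
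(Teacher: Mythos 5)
Your proposal is correct in outline, but it follows a genuinely different and substantially heavier route than the paper. The paper's proof is elementary and self-contained: it first treats the unramified case, where the residue extension $\ti{k}\to\ti{K}$ is finite separable, by lifting a primitive element $\ti{\alpha}$ to $\alpha\in K^\circ$ (so that $k[\alpha]=K$) and producing, by a Hensel-type lifting in $L$, a root $\beta$ of the minimal polynomial of $\alpha$ whose reduction is $\psi(\ti{\alpha})$; it then replaces $k$ by its maximal unramified subextension in $K$, so that $\ti{k}\to\ti{K}$ becomes purely inseparable, and observes that in this situation \emph{any} isometric $k$-embedding $K\to L$ automatically has the prescribed reduction, because $\psi(\ti{\alpha})$ is the unique root of $X^{p^n}-\ti{\beta}$ in $\ti{L}$ --- so no Galois closure and no decomposition-group theory is needed at all. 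Your route instead passes through a Galois closure $N/k$ and rests entirely on the surjectivity of $\mathrm{Gal}(N/k)\to\mathrm{Aut}^{\mathrm{gr}}_{\ti{k}}(\ti{N})$, which you correctly single out as the crux but do not prove. That statement is true: in the ungraded case it is the classical theorem that, over a henselian field, the automorphism group of a normal extension surjects onto the residue automorphisms, and the graded case assembles from this together with the inertia/ramification exact sequence identifying $I/P$ with $\Hom(\Gamma_N/\Gamma_k,\ti{N}_1^\ast)$ (your two-step correction --- first match the degree-one part, then twist by a character coming from inertia --- is exactly how one would argue). But the standard proofs of these facts use precisely the primitive-element-plus-Hensel arguments that the paper deploys directly, so your detour buys structural information (kernel equal to wild inertia) at the price of invoking ramification theory. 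Two caveats: in the paper this lemma sits \emph{before} graded reductions are introduced and is applied, in the following lemma for strictly affinoid spaces, to ordinary residue fields, so the intended meaning of $\ti{K}\to\ti{L}$ is the classical one; and your graded statement does not formally imply the classical one, since a morphism of ordinary residue fields need not extend a priori to a graded morphism. Fortunately your argument read ungraded --- the separable/purely-inseparable d\'evissage, normality of $\ti{N}/\ti{k}$, and the classical surjectivity theorem --- does prove the statement the paper actually needs, so the gap is one of citation and of an unproved (though true and essentially standard) key lemma rather than of a step that fails.
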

\begin{proof}
Soit $\psi : \ti{K} \to \ti{L}$ un morphisme $\ti{k}$-lin\'eaire. Supposons tout d'abord que l'extension $k \to K$ n'est pas ramifi\'ee. Dans ce cas, l'extension r\'esiduelle $\ti{k} \to \ti{K}$ est finie et s\'eparable,  donc engendr\'ee par un \'el\'ement~$\ti{\alpha}$. Relevons-le en un \'el\'ement~$\alpha$ de~$K^\circ$. On se convainc ais\'ement que $k[\alpha]=K$. 

Soit $P \in k^\circ[T]$ le polyn\^ome minimal unitaire de~$\alpha$ sur~$k$. Il existe une racine~$\beta$ de~$P$ dans~$L$ dont la r\'eduction est~$\psi(\ti{\alpha})$. On v\'erifie alors que le morphisme $k$-lin\'eaire $\varphi : K \to L$ d\'efini en envoyant~$\alpha$ sur~$\beta$ a pour r\'eduction~$\psi$.

Revenons, \`a pr\'esent, au cas d'une extension $k \to K$ quelconque. Le raisonnement pr\'ec\'edent permet de remplacer~$k$ par sa plus grande extension non ramifi\'ee contenue dans~$K$ et donc de supposer que l'extension $\ti{k} \to \ti{K}$ est radicielle. Notons~$p$ l'exposant caract\'eristique de~$\ti{k}$. Choisissons alors un morphisme $k$-lin\'eaire isom\'etrique $\varphi : K \to L$ quelconque. Son existence est assur\'ee par le fait que~$L$ est alg\'ebriquement clos.

Soit $\ti{\alpha} \in \ti{K}$. Par hypoth\`ese, il existe un entier positif~$n$ et un \'el\'ement~$\ti{\beta}$ de~$\ti{k}$ tels que $\ti{\alpha}^{p^n} = \ti{\beta}$. Nous avons donc \'egalement $\ti{\varphi}(\ti{\alpha})^{p^n} = \ti{\beta}$ et donc $\ti{\varphi}(\ti{\alpha})=\psi(\ti{\alpha})$, puisque le polyn\^ome $X^{p^n} - \ti{\beta}$ poss\`ede une seule racine dans~$\ti{L}$.
\end{proof}

\begin{lem}\label{lem:fibre}
Supposons que l'espace~$X$ est strictement $k$-affino\"ide. Soit~$L$ une extension valu\'ee compl\`ete et alg\'ebriquement close du corps~$k$. Notons $\pi \colon X\ho_{k} L \to X$ le morphisme de changement de base et~$\ti\pi$ le morphisme induit entre les r\'eductions. Soit~$x$ un point rigide de~$X$. Alors, l'application
\[\pi^{-1}(x) \to \ti\pi^{-1}(\ti x)\]
induite par l'application de r\'eduction est surjective.  
\end{lem}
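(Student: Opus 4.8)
Le plan est de ramener l'énoncé à une question sur le corps résiduel complété en~$x$, puis d'appliquer le lemme précédent pour relever les plongements résiduels. Comme~$x$ est rigide, le corps $K = \mathcal{H}(x)$ est une extension finie de~$k$, donc strictement $k$-affinoïde, et la fibre $\pi^{-1}(x)$ s'identifie canoniquement à $\Mc(K\ho_k L)$. Puisque~$L$ est algébriquement clos et complet et que~$K/k$ est finie, la $L$-algèbre $K\ho_k L = K\otimes_k L$ est artinienne et tous ses corps résiduels valent~$L$~; ses points de Berkovich sont donc exactement ses caractères $L$-linéaires, et $\pi^{-1}(x)$ est en bijection avec l'ensemble des $k$-plongements isométriques $\varphi\colon K\to L$, le corps résiduel complété en chacun de ces points étant~$L$.

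Je me ramène ensuite à une assertion sur les réductions. Le morphisme $\Mc(K)\to X$ induit par $\As \to K$ donne, après changement de base, un homéomorphisme de $\Mc(K\ho_k L)$ sur la fibre $\pi^{-1}(x)$, et la fonctorialité de la réduction fournit un carré commutatif exprimant que $\red$ restreinte à $\pi^{-1}(x)$ est le composé de la réduction $\Mc(K\ho_k L)\to\wti{K\ho_k L}$ et du morphisme de schémas $\wti{K\ho_k L}\to\wti{X\ho_k L}$. Cette première réduction étant surjective (elle l'est pour tout espace affinoïde), l'image de $\pi^{-1}(x)$ dans $\wti{X\ho_k L}$ coïncide avec l'image de $\wti{K\ho_k L}\to\wti{X\ho_k L}$, laquelle est automatiquement contenue dans $\ti\pi^{-1}(\ti x)$. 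Il suffit donc de démontrer que ce dernier morphisme de schémas atteint tout point de la fibre $\ti\pi^{-1}(\ti x)$.

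Soit alors $\ti y\in\ti\pi^{-1}(\ti x)$. C'est un point fermé dont le corps résiduel, fini sur le corps algébriquement clos~$\ti L$, vaut~$\ti L$, et $\ti\pi$ y induit un $\ti k$-plongement $\kappa(\ti x)\hookrightarrow\ti L$. Comme~$\ti L$ est algébriquement clos et que $\kappa(\ti x)$ se plonge dans la réduction $\wti{K}$ de~$K$, je prolonge ce plongement en un $\ti k$-plongement $\psi\colon\wti{K}\to\ti L$. Le lemme précédent, applicable puisque~$L$ est algébriquement clos, fournit un $k$-plongement isométrique $\varphi\colon K\to L$ de réduction~$\psi$, donc un point $y\in\pi^{-1}(x)$ au sens du premier paragraphe.

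La difficulté principale sera de vérifier que $\red(y)=\ti y$, c'est-à-dire que le caractère de $\wti{K\ho_k L}$ attaché à~$\varphi$ se projette exactement sur~$\ti y$. En effet, un point de $\ti\pi^{-1}(\ti x)$ n'est pas déterminé par le seul plongement résiduel $\kappa(\ti x)\to\ti L$ (phénomène déjà visible pour une extension ramifiée, où plusieurs plongements distincts induisent la même application sur les réductions, à cause de la ramification et de l'inséparabilité résiduelle éventuelles). C'est précisément le contrôle de la réduction assuré par le lemme précédent, combiné à la surjectivité de la réduction pour~$K$ qui paramètre toute la fibre par les plongements isométriques de~$K$, qui doit permettre de recoller cette information et d'établir que tout point de $\ti\pi^{-1}(\ti x)$ se factorise par $\wti{K\ho_k L}$.
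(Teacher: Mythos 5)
Your skeleton is the paper's: you identify $\pi^{-1}(x)$ with $\Mc\bigl(\Hs(x)\ho_{k} L\bigr)$ for the finite extension $K=\Hs(x)$ of $k$, you invoke the surjectivity of the reduction map of an affinoid space (Berkovich, proposition~2.4.4~(i)), and you use the preceding lemma to lift a residual embedding $\psi\colon\wti{K}\to\ti{L}$ to an isometric $k$-embedding $\varphi\colon K\to L$. But the proposal is not a proof: your last paragraph openly concedes that the decisive step --- checking that the point $y$ attached to $\varphi$ satisfies $\red(y)=\ti{y}$ for the \emph{prescribed} point $\ti{y}\in\ti\pi^{-1}(\ti{x})$ --- remains to be done, and even argues that the data you extracted from $\ti{y}$ (the embedding $\kappa(\ti{x})\hookrightarrow\ti{L}$) cannot determine $\ti{y}$. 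As long as you have no description of the fiber $\ti\pi^{-1}(\ti{x})$, this objection is fatal to your own strategy: you construct \emph{some} point of the fiber with the right residual embedding, not the given one. There are also two unproved assertions along the way: that every point of $\ti\pi^{-1}(\ti{x})$ is closed with residue field $\ti{L}$ (this requires knowing that $\wti{X\ho_{k}L}$ is integral over $\ti{X}\otimes_{\ti{k}}\ti{L}$, which you never invoke), and the embedding $\kappa(\ti{x})\hookrightarrow\wti{K}$ (true via $\ti{\chi}_{x}$, but it should be said).

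The paper closes exactly this gap by never chasing an individual $\ti{y}$: it writes the map $\pi^{-1}(x)\to\ti\pi^{-1}(\ti{x})$ as the composite of three maps, $\alpha\colon\Mc\bigl(\Hs(x)\ho_{k}L\bigr)\to\Spec\bigl(\wti{\Hs(x)\ho_{k}L}\bigr)$, $\beta\colon\Spec\bigl(\wti{\Hs(x)\ho_{k}L}\bigr)\to\Spec\bigl(\wti{\Hs(x)}\otimes_{\ti{k}}\ti{L}\bigr)$ and $\gamma\colon\Spec\bigl(\wti{\Hs(x)}\otimes_{\ti{k}}\ti{L}\bigr)\to\Spec\bigl(\kappa(\ti{x})\otimes_{\ti{k}}\ti{L}\bigr)$, and proves each surjective: $\alpha$ by Berkovich's result, $\beta$ by the lifting lemma (the reduction of the character attached to a lift $\varphi$ of $\psi$ automatically lies over the prime cut out by $\psi$, because the lemma gives $\ti\varphi=\psi$ --- no matching problem arises), and $\gamma$ by lying-over for an integral extension. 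The crucial structural input, which is precisely what your attempt lacks, is the identification of $\ti\pi^{-1}(\ti{x})$ as (the composite landing in) $\Spec\bigl(\kappa(\ti{x})\otimes_{\ti{k}}\ti{L}\bigr)$: once this is in place, a point of the fiber \emph{is} pinned down by a prime of $\kappa(\ti{x})\otimes_{\ti{k}}\ti{L}$, i.e., essentially by the residual embedding. The non-injectivity you worry about (several isometric embeddings $K\to L$ with the same reduction, as for a ramified extension) concerns only the map $\pi^{-1}(x)\to\ti\pi^{-1}(\ti{x})$ itself and is harmless for surjectivity. To repair your proof, you would have to establish this description of the fiber, at which point your argument collapses into the paper's three-step factorization.
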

\begin{proof}
Nous pouvons \'ecrire l'application $\pi^{-1}(x) \to \ti\pi^{-1}(\ti x)$ comme la compos\'ee des applications
\[\alpha \colon \Mc\left(\Hs(x)\hat{\otimes}_{k} L\right) \to \Spec\left(\wti{\Hs(x)\hat{\otimes}_{k} L}\right),\]
\[\beta \colon \Spec\left(\wti{\Hs(x)\hat{\otimes}_{k} L}\right) \to \Spec\left(\wti{\Hs(x)}\otimes_{\ti{k}} \ti{L}\right)\]
et
\[\gamma \colon \Spec\left(\wti{\Hs(x)} \otimes_{\ti{k}} \ti{L}\right) \to \Spec\left(\kappa(\ti{x}) \otimes_{\ti{k}} \ti{L}\right).\]
Il suffit de montrer que ces trois applications sont surjectives. Pour~$\alpha$, qui est une application de r\'eduction, ce r\'esultat provient de~\cite[proposition~2.4.4~(i)]{rouge}. Pour~$\beta$, on le d\'eduit du lemme pr\'ec\'edent, car~$\Hs(x)$ est une extension finie de~$k$, et c'est bien connu pour~$\gamma$.
\end{proof}

\begin{lem}\label{lem:staff}
Supposons que la valuation du corps~$k$ n'est pas triviale et que l'espace~$X$ est strictement $k$-affino\"{\i}de et \'equi\-di\-men\-sion\-nel. Soit~$\ti{x}$ un point ferm\'e de~$\ti{X}$. Alors l'ouvert $\red^{-1}(\ti{x})$ est connexe. 
\end{lem}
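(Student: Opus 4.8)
The plan is to deduce this from Bosch's Theorem~\ref{thm:Boschoriginal} by base change to an algebraically closed field, using Lemma~\ref{lem:fibre} to descend connectivity. The only obstruction to applying Bosch directly is his distinguished hypothesis, which need not hold over a field that is not algebraically closed. So first I would fix a complete, algebraically closed valued extension~$L$ of~$k$ (its valuation is then automatically nontrivial), write $X_{L}=X\ho_{k}L$, and denote by $\pi\colon X_{L}\to X$ the base-change morphism, by $\red_{L}\colon X_{L}\to\ti{X_{L}}$ the reduction of~$X_{L}$, and by $\ti{\pi}\colon\ti{X_{L}}\to\ti{X}$ the induced morphism on reductions, so that $\red\circ\pi=\ti{\pi}\circ\red_{L}$. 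The space~$X_{L}$ is strictly $L$-affino\"ide and, as recalled just above, still \'equidimensionnel. Moreover it falls within the scope of Theorem~\ref{thm:Boschoriginal}: its canonical reduction is reduced, the spectral semi-norm being power-multiplicative, and, $|L^\ast|$ being divisible, all its spectral values lie in~$|L^\ast|$, which is exactly the distinguished condition. Hence for every closed point~$\ti{y}$ of~$\ti{X_{L}}$ the tube $\red_{L}^{-1}(\ti{y})$ is connected.

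Next I would descend this connectivity. Since~$\pi$ is surjective and the reduction square commutes, $\pi^{-1}(\red^{-1}(\ti{x}))=\red_{L}^{-1}(\ti{\pi}^{-1}(\ti{x}))$, whence $\red^{-1}(\ti{x})=\pi(\red_{L}^{-1}(\ti{\pi}^{-1}(\ti{x})))$. The fibre $\ti{\pi}^{-1}(\ti{x})$ is a finite set of closed points $\ti{y_{1}},\dotsc,\ti{y_{m}}$ of~$\ti{X_{L}}$: indeed $\kappa(\ti{x})$ is finite over~$\ti{k}$ and~$\ti{L}$ is algebraically closed, so this fibre is zero-dimensional. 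As the tubes over distinct closed points are disjoint, $\red_{L}^{-1}(\ti{\pi}^{-1}(\ti{x}))$ is the disjoint union of the connected sets $\red_{L}^{-1}(\ti{y_{i}})$, and each $\pi(\red_{L}^{-1}(\ti{y_{i}}))$ is a connected subset of $\red^{-1}(\ti{x})$ whose union is all of $\red^{-1}(\ti{x})$. It remains to glue these pieces. Choosing a rigide point~$z_{0}$ of~$X$ with $\red(z_{0})=\ti{x}$ --- such a point exists, every closed point of~$\ti{X}$ being the reduction of a rigide point --- Lemma~\ref{lem:fibre} guarantees that $\pi^{-1}(z_{0})\to\ti{\pi}^{-1}(\ti{x})$ is surjective; so for each~$i$ there is $w_{i}\in\pi^{-1}(z_{0})$ with $\red_{L}(w_{i})=\ti{y_{i}}$, and then $z_{0}=\pi(w_{i})\in\pi(\red_{L}^{-1}(\ti{y_{i}}))$. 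Thus all the connected pieces $\pi(\red_{L}^{-1}(\ti{y_{i}}))$ contain the common point~$z_{0}$, so their union $\red^{-1}(\ti{x})$ is connected.

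The step I expect to be delicate is the control of the reduction of the base change: checking that the square relating $\red$, $\red_{L}$, $\pi$ and $\ti{\pi}$ commutes and, above all, that $\ti{\pi}^{-1}(\ti{x})$ is genuinely a finite set of closed points over each of which Bosch's theorem applies. This is precisely the local input isolated in Lemma~\ref{lem:fibre}, which is why its surjectivity statement is the crucial ingredient and dictates the use of an algebraically closed~$L$. The remaining points --- preservation of \'equidimensionalit\'e, the distinguished property over~$L$, and the existence of a rigide point in the tube --- are comparatively routine, and once~$z_{0}$ is produced the gluing is immediate.
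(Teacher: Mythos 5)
Your overall architecture is close to the paper's: both proofs rest on the same three ingredients (base change to a complete algebraically closed field, Bosch's Theorem~\ref{thm:Boschoriginal} applied upstairs, and Lemma~\ref{lem:fibre} to descend). The organization differs: the paper chooses, for \emph{each} point $y$ of the tube, an algebraically closed $L$ containing $\Hs(y)$, so that $y$ lifts to an $L$-rational point $u$; the single connected tube $\red_{L}^{-1}(\red(u))$ then contains a lift of $y$ and, by Lemma~\ref{lem:fibre} applied at a rigid point $x$ of the tube, meets $\pi^{-1}(x)$, so its image connects $y$ to $x$. This per-point choice of $L$ completely sidesteps the question of whether $\ti\pi^{-1}(\ti x)$ is finite, which your fixed-$L$ version must confront. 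Your explicit choice of a rigid point $z_{0}$ is welcome (the paper is elliptical on this: Lemma~\ref{lem:fibre} only applies to rigid points, so its ``$x$'' must in fact be taken rigid), but your write-up has two genuine gaps.

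First, the verification of the ``distingu\'e'' hypothesis is wrong as stated. The criterion of \cite[theorem~6.4.3/1]{BGR} over an algebraically closed field is that the algebra itself be \emph{reduced} with sup-norm values in $|L|$; reducedness of the canonical reduction $\wti{A\ho_{k}L}$ is automatic (power-multiplicativity, as you say) and is not the relevant condition. Since $X$ is not assumed reduced --- and even for reduced $X$ the base change $X\ho_{k}L$ can fail to be reduced in positive characteristic --- the space $X\ho_{k}L$ need not be distinguished at all: a non-reduced algebra never is. The fix is the paper's one-liner: replace $X\ho_{k}L$ by its reduced quotient, which changes neither the underlying topological space nor the reduction map, because nilpotents lie in the kernel of the spectral semi-norm.

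Second, your justification that $\ti\pi^{-1}(\ti x)$ is a finite set of closed points tacitly identifies $\wti{X\ho_{k}L}$ with $\ti X\otimes_{\ti k}\ti L$ (whose fibre over $\ti x$ is indeed $\Spec(\kappa(\ti x)\otimes_{\ti k}\ti L)$, a finite scheme); but the canonical reduction does not commute with ground-field extension in general, so ``$\kappa(\ti x)$ finite over $\ti k$ and $\ti L$ algebraically closed'' does not by itself control the fibre of $\ti\pi$. Fortunately the repair lies inside your own argument: choose $z_{0}$ \emph{first}. Then $\pi^{-1}(z_{0})=\Mc(\Hs(z_{0})\ho_{k}L)$ is a finite set of points with $\Hs=L$ (an Artinian $L$-algebra, $\Hs(z_{0})$ being finite over $k$), and the surjectivity in Lemma~\ref{lem:fibre} says precisely that $\ti\pi^{-1}(\ti x)=\red_{L}(\pi^{-1}(z_{0}))$: hence the fibre is finite and each of its points is $\ti L$-rational, so closed. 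With these two corrections your decomposition of $\red_{L}^{-1}(\ti\pi^{-1}(\ti x))$ into finitely many connected tubes all meeting $\pi^{-1}(z_{0})$ goes through and yields a valid, slightly more economical variant (one base change instead of one per point) of the paper's proof.
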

\begin{proof}
Soient~$x$ et~$y$ deux points de $\red^{-1}(\tilde{x})$. Soit~$L$ une extension valu\'ee compl\`ete et alg\'ebriquement close du corps~$\Hs(y)$ et consid\'erons le diagramme commutatif
\[\xymatrix{
X \ho_{k} L \ar[r]^\pi \ar[d]^\red & X \ar[d]^\red\\
\wti{X\ho_{k} L} \ar[r]^{\ti\pi} & \ti X
}.\]

Remarquons que, pour tout point $\ti L$-rationnel~$\alpha$ de~$\Spec(\wti{X\ho_{k} L})$, le tube~$\red^{-1}(\alpha)$ est connexe. En effet, nous pouvons supposer $X \ho_{k} L$ est r\'eduit, car cela ne change ni l'espace topologique sous-jacent, ni l'application de r\'eduction. Dans ce cas, puisque~$L$ est alg\'ebriquement clos et de valuation non triviale, l'espace strictement $L$-affino\"{\i}de~$X \ho_{k} L$ est distingu\'e, d'apr\`es~\cite[theorem~6.4.3/1]{BGR}. Le r\'esultat d\'ecoule alors du th\'eor\`eme~\ref{thm:Boschoriginal} de S.~Bosch.

Par choix de~$L$, il existe un point $L$-rationnel~$u$ de~$X \ho_{k} L$ au-dessus de~$y$. Sa r\'eduction~$\red(u)$ appartient alors \`a $\ti\pi^{-1}(\ti x)$. D'apr\`es la remarque pr\'ec\'edente, le tube $C=\red^{-1}(\red(u))$ est connexe. D'apr\`es le lemme~\ref{lem:fibre}, il rencontre~$\pi^{-1}(x)$. Par cons\'equent, son image~$\pi(C)$ est une partie connexe de $\red^{-1}(\ti x)$ qui contient \`a la fois les points~$x$ et~$y$.
\end{proof}

Pour g\'en\'eraliser le r\'esultat hors du cadre strictement affino\"{\i}de, nous allons utiliser les r\'eductions gradu\'ees au sens de M.~Temkin (\cf~\cite{TemkinII}). Ce seront les seules que nous utiliserons dans la suite de cette section. Rappelons-en la construction en quelques mots. Soit~$\Bs$ une $k$-alg\`ebre de Banach et d\'esignons par~$\rho$ sa norme spectrale. Pour tout nombre r\'eel~$r>0$, on pose $\Bs^\circ_{r} = \{f\in \Bs \mid \rho(f)\le r\}$, $\Bs^{\circ\circ}_{r} = \{f\in \Bs \mid \rho(f)< r\}$ et $\ti{\Bs}_{r} = \Bs^\circ_{r}/\Bs^{\circ\circ}_{r}$. La r\'eduction gradu\'ee de~$\Bs$ est alors l'alg\`ebre $\R_{+}^\ast$-gradu\'ee
\[\ti\Bs = \bigoplus_{r>0} \ti{\Bs}_{r}.\]
Lorsque~$\Bs$ est une alg\`ebre $k$-affino\"{\i}de, le spectre gradu\'e de~$\ti\Bs$ poss\`ede des propri\'et\'es analogues \`a la r\'eduction dans le cas strictement affino\"{\i}de classique.

Commen\c{c}ons par passer du cas d'un point ferm\'e \`a celui d'un ferm\'e de Zariski connexe quelconque. Notre preuve suit celle de~\cite[lemme~3.1.2]{salg}. Pour la d\'efinition de l'int\'erieur d'un morphisme entre espaces $k$-analytiques, nous renvoyons \`a~\cite[sections~2.5 et~3.1]{rouge}. 

\begin{lem}\label{lem:reductioninterieur}
Soient $\varphi \colon Y \to Z$ un morphisme entre un bon espace $k$-analytique~$Y$ et un espace $k$-affino\"{\i}de~$Z$. Notons $\red \colon Y \to \ti Y$ l'application de r\'eduction gradu\'ee. Alors $\red(\varphi(Y))$ contient l'adh\'erence de Zariski de~$\red(\varphi(\Int(Y/Z)))$. 
\end{lem}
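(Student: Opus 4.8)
The plan is to transport the statement through the graded reduction square attached to~$\varphi$ and then to read off the relative interior via Temkin's theory. Throughout, $\red_Y$ and $\red_Z$ denote the graded reduction maps of $Y$ and~$Z$, so that the $\red$ of the statement is $\red_Z$. First I would reduce to the case where $Y$ is $k$-affino\"{\i}de. Since $Y$ is good, every $y\in\Int(Y/Z)$ has an affino\"{\i}de neighbourhood $V$ with $y\in\Int(V/Y)$, and the transitivity of the relative interior for $V\hookrightarrow Y\xrightarrow{\varphi}Z$ (\cf~\cite[\S\S~2.5, 3.1]{rouge}), namely $\Int(V/Z)=\Int(V/Y)\cap(\varphi|_{V})^{-1}(\Int(Y/Z))$, gives $y\in\Int(V/Z)\subseteq\Int(Y/Z)$. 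Hence $\Int(Y/Z)=\bigcup_{V}\Int(V/Z)$ and $\varphi(Y)=\bigcup_{V}\varphi(V)$. As $\ti Z$ is a graded scheme of finite type over~$\ti k$, it is noetherian, so the closed sets $\overline{\bigcup_{V\in F}\red_Z(\varphi(\Int(V/Z)))}$, indexed by the finite subsets~$F$ of the covering, stabilise; thus finitely many $V_{1},\dots,V_{m}$ satisfy $\overline{\red_Z(\varphi(\Int(Y/Z)))}=\bigcup_{i}\overline{\red_Z(\varphi(\Int(V_{i}/Z)))}$, and it is enough to prove, for each affino\"{\i}de~$V_{i}$, that $\overline{\red_Z(\varphi(\Int(V_{i}/Z)))}\subseteq\red_Z(\varphi(V_{i}))\subseteq\red_Z(\varphi(Y))$.

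So assume $Y=\Mc(\As)$ is $k$-affino\"{\i}de, $Z=\Mc(\Bs)$, with $\varphi$ induced by a bounded homomorphism $\Bs\to\As$. Graded reduction being functorial, it yields a morphism $\ti\varphi\colon\ti Y\to\ti Z$ of graded schemes fitting in the commutative square $\red_Z\circ\varphi=\ti\varphi\circ\red_Y$. Since the reduction map of an affino\"{\i}de space is surjective, one obtains
\[\red_Z(\varphi(Y))=\ti\varphi(\ti Y)\quad\text{and}\quad\red_Z(\varphi(\Int(Y/Z)))=\ti\varphi\big(\red_Y(\Int(Y/Z))\big).\]
Furthermore $\ti\varphi$ is of finite type between noetherian graded schemes, so by the graded analogue of Chevalley's theorem $\ti\varphi(\ti Y)$ is a constructible subset of~$\ti Z$; in particular it contains the generic point of each irreducible component of its Zariski closure.

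The heart of the matter is to control $\red_Y(\Int(Y/Z))$. Here I would invoke Temkin's reading of the relative interior on graded reductions of germs (\cite{TemkinII}): $y$ lies in $\Int(Y/Z)$ exactly when the induced morphism of reduced germs at~$y$ is proper over the reduced germ of~$Z$ at~$\varphi(y)$. Globalising this pointwise criterion over the interior, $\red_Y(\Int(Y/Z))$ is contained in a Zariski-closed subset $\ti Y'\subseteq\ti Y$ over which $\ti\varphi$ is proper; then $\ti\varphi|_{\ti Y'}$ is closed, so $\ti\varphi(\ti Y')$ is Zariski closed. As it contains $\ti\varphi(\red_Y(\Int(Y/Z)))=\red_Z(\varphi(\Int(Y/Z)))$, it contains its closure too, and
\[\overline{\red_Z(\varphi(\Int(Y/Z)))}\subseteq\ti\varphi(\ti Y')\subseteq\ti\varphi(\ti Y)=\red_Z(\varphi(Y)).\]
The constructibility recorded above is what ensures, in a compatible way, that the generic points so obtained do lie in the image $\ti\varphi(\ti Y)$.

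The hard part will be exactly this passage from the relative interior to properness of the reduction: one must upgrade Temkin's germ-wise, pointwise criterion to the global statement that the image of the interior lies in a closed subset of~$\ti Y$ that is proper over~$\ti Z$, so that the Zariski closure of its image stays inside $\ti\varphi(\ti Y)$. Verifying that properness is inherited on such a closed locus, and not merely on an open neighbourhood of each interior point, is the delicate step. By contrast, the surjectivity of the reduction map, the graded Chevalley theorem, the transitivity of the relative interior, and the noetherian bookkeeping used to descend from the good space~$Y$ to its affino\"{\i}de pieces are all standard.
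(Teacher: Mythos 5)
Your proposal tracks the paper faithfully through its preliminary steps: the reduction to $Y$ affino\"{\i}de via transitivity of the relative interior is exactly the paper's first move (done there pointwise, via \cite[proposition~3.1.3~(ii) et corollary~2.5.13~(ii)]{rouge}, so your noetherian stabilisation bookkeeping is harmless but unnecessary), and the identity $\red_Z(\varphi(Y))=\ti{\varphi}(\ti{Y})$ from functoriality and surjectivity of the reduction maps is also the paper's. But the step you yourself flag as ``the delicate step'' is not merely unproved --- as you state it, it is false. You want a single Zariski-closed $\ti{Y}'\subseteq\ti{Y}$ containing $\red_Y(\Int(Y/Z))$ with $\ti{\varphi}|_{\ti{Y}'}$ proper; since $\ti{Y}$ and $\ti{Z}$ are affine graded schemes of finite type, this would force $\ti{Y}'$ to be finite over $\ti{Z}$. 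Take $Y=\Mc(k\{T\})$ and $Z=\Mc(k)$: here $\Int(Y/Z)=Y\setminus\{\eta\}$, where $\eta$ is the Gauss point, which is the unique point of $Y$ reducing to the generic point of the graded affine line $\ti{Y}$. Hence $\red_Y(\Int(Y/Z))$ is the complement of the generic point in $\ti{Y}$, and the only closed subset of $\ti{Y}$ containing it is $\ti{Y}$ itself, which is certainly not proper over $\ti{Z}=\Spec(\ti{k})$. (The lemma's conclusion is trivially true in this example; it is your intermediate global claim that fails.) There is also a category error feeding this claim: Temkin's criterion for $y\in\Int(Y/Z)$ lives on the reduction of the \emph{germ}, i.e.\ on the birational space $\P_{\wti{\Hs(y)}/\ti{k}}\{\cdot\}$ of graded valuation rings (the object recalled in section~3 of the paper), not on the graded spectrum $\ti{Y}=\Spec(\ti{\As})$; properness in that Riemann--Zariski sense does not transfer to a closed subscheme of $\ti{Y}$. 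The graded Chevalley argument is then a red herring: constructibility of $\ti{\varphi}(\ti{Y})$ is never what closes the gap.

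The paper repairs exactly this point by staying pointwise and by producing its closed set inside $\ti{Z}$ rather than $\ti{Y}$. Fix $y\in\Int(Y/Z)$ with $Y$ affino\"{\i}de: Berkovich's definition of the interior furnishes a closed immersion $\psi\colon Y\hookrightarrow Z\times\D^+_{\br}$ over $Z$, \emph{with polyradius and presentation depending on $y$}, such that $\psi(y)\in Z\times\D^-_{\br}$. On graded reductions, $\ti{\psi}$ is finite by \cite[proposition~3.1~(iii)]{TemkinII}, hence has closed image by graded going-up, and the condition $\psi(y)\in Z\times\D^-_{\br}$ becomes $\ti{\psi}(\ti{y})=\sigma(\ti{\varphi}(\ti{y}))$, where $\sigma$ is the zero section of $\ti{\pi}\colon\ti{Z}\times\wti{\D^+_{\br}}\to\ti{Z}$. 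Then $\sigma^{-1}(\ti{\psi}(\ti{Y}))$ is a closed subset of $\ti{Z}$ containing $\wti{\varphi(y)}$ and contained in $\ti{\pi}(\ti{\psi}(\ti{Y}))=\ti{\varphi}(\ti{Y})=\red(\varphi(Y))$, and letting $y$ run over the interior gives the lemma. This zero-section mechanism is precisely the finiteness-over-$\ti{Z}$ structure your proposal was reaching for, but it exists only one interior point at a time, for a presentation adapted to that point --- never as a single closed locus of $\ti{Y}$, as the disc example shows.
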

\begin{proof}
Soit $y\in\Int(Y/Z)$. Nous allons montrer que $\red(\varphi(Y))$ contient l'adh\'erence Zariski de~$\wti{\varphi(y)}$.

Par d\'efinition, il existe un voisinage affino\"{\i}de~$Y'$ de~$y$ et un voisinage affino\"{\i}de~$Z'$ de~$\varphi(y)$ tels que $\varphi(Y')\subset Z'$ et $y\in \Int(Y'/Z')$. D'apr\`es~\cite[proposition~3.1.3~(ii) et corollary~2.5.13~(ii)]{rouge}, nous avons $y\in\Int(Y'/X)$. Quitte \`a remplacer~$Y$ par~$Y'$, nous pouvons supposer que~$Y$ est affino\"{\i}de.

Consid\'erons le diagramme commutatif
\[\xymatrix{
Y \ar[r]^\varphi \ar[d]^\red & Z \ar[d]^\red\\
\ti Y \ar[r]^{\ti\varphi} & \ti Z
}.\]

Pour tout polyrayon~$\br$, notons~$\D^+_{\br}$ (resp.~$\D^-_{\br}$) le disque ferm\'e (resp. ouvert) de centre~0 et de polyrayon~$\br$. Puisque $y\in\Int(Y/Z)$, il existe un polyrayon~$\br$ et un diagramme commutatif de la forme
\[\xymatrix{
& Z \times \D^+_{\br} \ar[d]^\pi\\
Y \ar[r]^{\varphi} \ar[ru]^\psi & Z
},\]
o\`u~$\psi$ est une immersion ferm\'ee qui envoie~$y$ dans $Z\times \D^-_{\br}$. En passant aux r\'eductions, nous obtenons le diagramme
\[\xymatrix{
& \ti Z \times \wti{\D^+_{\br}} \ar[d]^{\ti\pi}\\
\ti Y \ar[r]^{\ti\varphi} \ar[ru]^{\ti\psi} & \ti Z
}.\]
La $\ti k$-vari\'et\'e gradu\'ee $\wti{\D^+_{\br}}$ est l'analogue d'une droite et le morphisme~$\ti\pi$ poss\`ede une section nulle~$\sigma$. La condition $\psi(y) \in Z\times \D^-_{\br}$ se traduit alors par $\ti\psi(\ti y) = \sigma(\ti\varphi(\ti y))$.

En outre, d'apr\`es~\cite[proposition~3.1~(iii)]{TemkinII}, le morphisme~$\ti\psi$ est fini. On en d\'eduit que son image est ferm\'ee. Ce r\'esultat se d\'emontre comme dans le cadre non gradu\'e par l'analogue du th\'eor\`eme \og going-up \fg{} de I.~S.~Cohen et A.~Seidenberg. Par cons\'equent, $\sigma^{-1}(\ti\psi(\ti Y))$ est une partie ferm\'ee de~$\ti Z$ contenue dans $\ti\pi(\ti\psi(\ti Y)) = \ti\varphi(\ti Y) = \red(\varphi(Y))$ et contenant~$\ti\varphi(\ti y)$. Le r\'esultat s'ensuit.
\end{proof}

\begin{rem}
On pourrait \'enoncer et d\'emontrer ce r\'esultat dans le cadre strictement affino\"{\i}de, mais la d\'efinition de l'int\'erieur, parce qu'elle autorise des rayons arbitraires, rend l'op\'eration malais\'ee. Signalons cependant qu'A.~Ducros a proc\'ed\'e ainsi dans la preuve de~\cite[lemme~3.1.2]{salg} (en supposant le corps de base alg\'ebriquement clos).
\end{rem}

\begin{lem}\label{lem:ducros}
Supposons que la valuation du corps~$k$ n'est pas triviale et que l'espace~$X$ est strictement $k$-affino\"{\i}de et \'equi\-di\-men\-sion\-nel. Soit~$\ti{Z}$ un ferm\'e de Zariski connexe de~$\ti{X}$. Alors son tube $\red^{-1}(\ti{Z})$ est un ouvert connexe de~$X$.
\end{lem}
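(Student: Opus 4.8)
The plan is to establish openness by hand and connectedness by a proof by contradiction that pushes a would-be disconnection of the tube down to a disconnection of~$\tilde{Z}$. For openness, write $\tilde{Z} = V(I)$ with $I = (\tilde{f}_{1}, \dotsc, \tilde{f}_{m})$, which is possible since the reduced algebra~$\tilde{\As}$ is of finite type over~$\tilde{k}$, hence Noetherian. Lifting the~$\tilde{f}_{i}$ to elements $f_{i}\in \As^\circ$, a point~$x$ lies in $\red^{-1}(\tilde{Z})$ exactly when $|f_{i}(x)| < 1$ for all~$i$, so that $\red^{-1}(\tilde{Z}) = \bigcap_{i=1}^{m} \{x\in X \mid |f_{i}(x)| < 1\}$ is a finite intersection of open sets, hence open.

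For connectedness, suppose for contradiction that $\red^{-1}(\tilde{Z}) = U_{1} \sqcup U_{2}$ with $U_{1}, U_{2}$ nonempty, open and disjoint. Since $\red^{-1}(\tilde{Z})$ is open in~$X$, both~$U_{i}$ are open in~$X$, so the inclusions $U_{i} \hookrightarrow X$ are open immersions and therefore boundaryless, giving $\Int(U_{i}/X) = U_{i}$. Applying Lemma~\ref{lem:reductioninterieur} to these inclusions (in its strictly affino\"{\i}de incarnation, \cf{} the remark following it and~\cite{salg}) shows that $\red(U_{i})$ contains the Zariski closure of $\red(U_{i})$; in other words, each $\red(U_{i})$ is a Zariski-closed subset of~$\tilde{X}$. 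Moreover $\red(U_{1}) \cup \red(U_{2}) = \red(\red^{-1}(\tilde{Z})) = \tilde{Z}$, since~$\red$ is surjective.

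It remains to see that $\red(U_{1})$ and $\red(U_{2})$ are disjoint, and this is where Lemma~\ref{lem:staff} enters: for every closed point~$\tilde{x}$ of~$\tilde{Z}$ the fiber $\red^{-1}(\tilde{x})$ is connected and nonempty, hence contained entirely in~$U_{1}$ or in~$U_{2}$; consequently no closed point of~$\tilde{Z}$ can belong simultaneously to $\red(U_{1})$ and to $\red(U_{2})$. Thus $\red(U_{1}) \cap \red(U_{2})$ is a Zariski-closed subset of the finite-type $\tilde{k}$-scheme~$\tilde{X}$ containing no closed point, and is therefore empty. We conclude that $\tilde{Z} = \red(U_{1}) \sqcup \red(U_{2})$ is a partition into two nonempty Zariski-closed sets, contradicting the connectedness of~$\tilde{Z}$. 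Hence $\red^{-1}(\tilde{Z})$ is connected.

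The main obstacle is the step showing that $\red(U_{i})$ is Zariski-closed: the image of an open piece under the reduction map has no reason a priori to be closed, and it is precisely the interior lemma --- through the fact that an open immersion is boundaryless, so that the relative interior absorbs all of~$U_{i}$ --- that forces it. The one bookkeeping point to check carefully is the compatibility between the reduction used in Lemma~\ref{lem:staff} (the classical one, on which Bosch's theorem rests) and the one appearing in Lemma~\ref{lem:reductioninterieur}; in the strictly $k$-affino\"{\i}de setting the two are compatible enough that the density-of-closed-points argument applies without change.
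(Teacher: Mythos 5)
Your proof is correct, and it rests on exactly the same two pillars as the paper's --- Lemma~\ref{lem:reductioninterieur} to show that the reduction of a boundaryless piece has Zariski-closed image, and Lemma~\ref{lem:staff} for the connectedness of tubes over closed points --- but you assemble them differently. The paper first reduces, by induction on the dimension, to the case where $\ti{Z}$ is irreducible, picks a point $z$ above the generic point $\ti z$, shows that the image $\red(D)$ of the component $D$ containing $z$ is closed and contains $\ti z$, hence equals $\ti{Z}$, and then uses a connected tube $\red^{-1}(\ti x)$ over a closed point $\ti x \in \red(C)$ to glue any other component $C$ to $D$. You instead run a direct contradiction: a clopen partition $\red^{-1}(\ti{Z}) = U_1 \sqcup U_2$ yields, via the interior lemma, two Zariski-closed images covering $\ti{Z}$ (surjectivity of $\red$), which are disjoint because each tube over a closed point is connected and so cannot meet both $U_i$, and because a nonempty closed subset of the finite-type scheme $\ti{X}$ must contain a closed point (Jacobson property). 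This eliminates the induction, the irreducibility reduction and the generic-point bookkeeping entirely, at the modest price of invoking the density of closed points explicitly --- a fact the paper also uses implicitly when it extracts a closed point from $\red(C)$ --- and your explicit verification of openness (lifting generators of the ideal to get $\red^{-1}(\ti Z) = \bigcap_i \{|f_i| < 1\}$) fills in a step the paper leaves as standard anticontinuity of the reduction map. Two small cautions. First, do not lean on the \og strictly affino\"{\i}de incarnation \fg{} of the interior lemma via \cite{salg}: as the paper's own remark notes, Ducros proves it there under the hypothesis that the base field is algebraically closed, which you do not have; it is safer to apply the paper's graded Lemma~\ref{lem:reductioninterieur} as stated and then pass to the classical reduction. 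Second, the resulting compatibility between the graded reduction (used in Lemma~\ref{lem:reductioninterieur}) and the classical one (on which Lemma~\ref{lem:staff} and Bosch's theorem rest) is exactly the point you flag at the end; note that the paper's own proof silently faces the same issue, so your handling is on par with it, but in a fully detailed write-up this translation (the degree-one part of the graded reduction recovering the classical one for strictly $k$-affino\"{\i}de spaces with nontrivial valuation) should be spelled out.
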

\begin{proof}
\`A l'aide d'une r\'ecurrence sur la dimension, on se ram\`ene au cas o\`u~$\ti Z$ est irr\'eductible. Notons~$\ti z$ son point g\'en\'erique. Choisissons un point~$z$ de~$X$ au-dessus de~$Z$.

Soit~$C$ une composante connexe de~$\red^{-1}(\ti Z)$. Le morphisme d'inclusion $C \to X$ est sans bord et le lemme~\ref{lem:reductioninterieur} montre alors que~$\red{C}$ est un ferm\'e de Zariski de~$\ti X$. En particulier, il contient un point ferm\'e~$\ti x$ de~$\ti X$.

Soit~$D$ la composante connexe de~$\red^{-1}(\ti Z)$ contenant~$z$. Le m\^eme argument que pr\'ec\'edemment montre que~$\red{D}$ est un ferm\'e de Zariski de~$\ti X$. Puisqu'il contient le point g\'en\'erique~$\ti z$, il est n\'ecessairement \'egal \`a~$\ti Z$ et contient donc le point~$\ti x$. Le lemme~\ref{lem:staff} montre alors que $C=D$, autrement dit, que~$\red^{-1}(\ti Z)$ est connexe.
\end{proof}

Rappelons qu'un polyrayon $\br=(r_{1},\dotsc,r_{n}) \in (\R_{+}^\ast)^n$ est dit $k$-libre si la famille $(r_{1},\dotsc,r_{n})$ est libre dans le $\Q$-espace vectoriel $\R_{+}^\ast/\sqrt{|k^\ast|}$ (\cf~\cite[1.1]{variationdimension}). Pour un tel polyrayon~$\br$, on note~$k_{\br}$ l'ensemble des s\'eries de la forme
\[f = \sum_{i_{1},\dotsc,i_{n}\ge 0} a_{i_{1},\dotsc,i_{n}}\, u_{1}^{i_{1}} \dotsb u_{n}^{i_{n}}\] 
telle que la famille $\big(a_{i_{1},\dotsc,i_{n}}\, r_{1}^{i_{1}} \dotsb r_{n}^{i_{n}}\big)_{i_{1},\dotsc,i_{n}\ge 0}$ soit sommable. Muni de la norme d\'efinie par 
\[\|f\|_{\br} = \sum_{i_{1},\dotsc,i_{n}\ge 0} a_{i_{1},\dotsc,i_{n}}\, r_{1}^{i_{1}} \dotsb r_{n}^{i_{n}},\]
c'est un corps valu\'e complet. Sa r\'eduction gradu\'ee se calcule simplement~:
\[\wti{k_{\br}} = \ti{k}[r_{1}^{-1}\,T_{1}, r_{1}\,T_{1}^{-1},\dotsc,r_{n}^{-1}\,T_{n}, r_{n}\,T_{n}^{-1}].\] 

\begin{thm}\label{thm:Bosch}
Supposons que l'espace~$X$ est \'equi\-di\-men\-sion\-nel. Soit~$\ti{Z}$ un ferm\'e de Zariski connexe de~$\ti{X}$. Alors son tube $\red^{-1}(\ti{Z})$ est un ouvert connexe de~$X$.
\end{thm}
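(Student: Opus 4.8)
The plan is to deduce the general case from the strictly affinoid case already settled in Lemma~\ref{lem:ducros}, by performing a base change to a field of the form~$k_{\br}$ and then descending the connectedness along the (surjective) projection. First I would choose a $k$-free polyradius $\br=(r_{1},\dotsc,r_{n})$ such that $X\ho_{k}k_{\br}$ is strictly $k_{\br}$-affino\"ide: it suffices to take for~$\br$ a $\Q$-basis of the subgroup of $\R_{+}^\ast/\sqrt{|k^\ast|}$ generated by the radii appearing in a presentation of~$\As$, so that those radii then lie in $\sqrt{|k_{\br}^\ast|}$. Enlarging~$\br$ by one $k$-free element if necessary, I may moreover assume $n\ge 1$, so that $r_{1}\in|k_{\br}^\ast|$ with $r_{1}\ne 1$ and the valuation of $L:=k_{\br}$ is non trivial. (Should no $k$-free element exist at all, then $\sqrt{|k^\ast|}=\R_{+}^\ast$; but then $X$ is already strictly $k$-affino\"ide over a non trivially valued field, and Lemma~\ref{lem:ducros} applies to~$X$ directly.) Since base change preserves equidimensionality, $X_{L}:=X\ho_{k}L$ is a strictly $L$-affino\"ide, \'equidimensionnel space over the non trivially valued field~$L$, so Lemma~\ref{lem:ducros} is available for it.

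Write $\pi\colon X_{L}\to X$ for the projection and $\ti\pi\colon\wti{X_{L}}\to\ti X$ for the induced morphism of graded reductions; together with the two reduction maps they form a commutative square, so that $\pi^{-1}(\red^{-1}(\ti Z))=\red^{-1}(\ti\pi^{-1}(\ti Z))$. The heart of the argument, and the step I expect to be the most delicate, is to verify that $\ti\pi^{-1}(\ti Z)$ is a \emph{connected} Zariski-closed subset of~$\wti{X_{L}}$. Here I would invoke Temkin's computation that graded reduction commutes with this base change, namely $\wti{X_{L}}=\ti X\otimes_{\ti k}\wti{k_{\br}}$ with $\wti{k_{\br}}=\ti k[r_{1}^{-1}T_{1},r_{1}T_{1}^{-1},\dotsc,r_{n}^{-1}T_{n},r_{n}T_{n}^{-1}]$ (\cf~\cite{TemkinII}). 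If $B$ denotes the graded coordinate ring of the Zariski-closed subset~$\ti Z$, it follows that $\ti\pi^{-1}(\ti Z)$ is the graded spectrum of $B\otimes_{\ti k}\wti{k_{\br}}=B[r_{1}^{-1}T_{1},r_{1}T_{1}^{-1},\dotsc,r_{n}^{-1}T_{n},r_{n}T_{n}^{-1}]$.

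Connectedness of a graded scheme is equivalent to the absence of non-trivial idempotents in its coordinate ring. Since the grading group~$\R_{+}^\ast$ is torsion-free, every idempotent is homogeneous of degree~$1$; and, viewed as an ordinary ring, $B[r_{1}^{-1}T_{1},r_{1}T_{1}^{-1},\dotsc]$ is a ring of Laurent polynomials over~$B$, which carries exactly the same idempotents as~$B$. As $\ti Z$ is connected, $B$ has no non-trivial idempotent, hence neither does the Laurent extension, and $\ti\pi^{-1}(\ti Z)$ is connected. It is Zariski closed because $\ti\pi$ is a morphism and $\ti Z$ is closed.

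It then remains to assemble the pieces. Applying Lemma~\ref{lem:ducros} to~$X_{L}$ and to the connected Zariski-closed subset $\ti\pi^{-1}(\ti Z)$ shows that $\red^{-1}(\ti\pi^{-1}(\ti Z))$ is connected; by the commutative square this set is $\pi^{-1}(\red^{-1}(\ti Z))$. As $\pi$ is surjective, $\red^{-1}(\ti Z)=\pi(\pi^{-1}(\red^{-1}(\ti Z)))$ is the image of a connected set under a continuous map, hence connected. Finally $\red^{-1}(\ti Z)$ is open, since the tube of a Zariski-closed subset is defined by strict inequalities on the reduction, which is an open condition.
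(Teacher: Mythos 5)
Your proof is correct and follows essentially the same route as the paper's: base change along $\pi\colon X\ho_{k}k_{\br}\to X$, Temkin's identification of $\ti\pi$ with the base change $\ti X\otimes_{\ti k}\wti{k_{\br}}$ together with the explicit Laurent-type description of $\wti{k_{\br}}$ to see that $\ti\pi^{-1}(\ti Z)$ remains a connected Zariski-closed subset, then Lemma~\ref{lem:ducros} upstairs and surjectivity of~$\pi$ to descend. The only differences are expository: you spell out the idempotent argument that the paper compresses into the sentence \og la formule explicite \ldots\ montre que ce morphisme pr\'eserve la connexit\'e \fg, and you explicitly arrange for $k_{\br}$ to be non-trivially valued, a hypothesis of Lemma~\ref{lem:ducros} that the paper leaves implicit.
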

\begin{proof}
Soit~$\br$ un polyrayon $k$-libre tel que l'espace $X\hat{\otimes}_{k}k_{\br}$ soit strictement $k_{\br}$-affino\"ide. Consid\'erons le diagramme commutatif
\[\xymatrix{
X \ho_{k} k_{\br} \ar[r]^\pi \ar[d]^\red & X \ar[d]^\red\\
\wti{X\ho_{k} k_{\br}} \ar[r]^{\ti\pi} & \ti X
}.\]

D'apr\`es~\cite[proposition~3.1~(i)]{TemkinII}, le morphisme~$\ti\pi$ n'est autre que le morphisme de changement de base \`a~$\wti{k_{\br}}$. La formule explicite indiqu\'ee plus haut montre que ce morphisme pr\'eserve la connexit\'e. Par cons\'equent, $\ti\pi^{-1}(\ti Z)$ est un ferm\'e de Zariski connexe de $\wti{X\ho_{k} k_{\br}}$. D'apr\`es le lemme~\ref{lem:ducros}, son image r\'eciproque par~$\red^{-1}$ est connexe et l'on conclut gr\^ace \`a la surjectivit\'e du morphisme~$\pi$.
\end{proof}

\begin{rem}
Nous ignorons si le r\'esultat vaut sans l'hypoth\`ese d'\'equidimensionalit\'e.
\end{rem}

\section{Lieu $H$-strict d'un espace affino\"{\i}de}

Dans cette partie, nous fixons un espace $k$-affino\"{\i}de $X=\Mc(\As)$ et un sous-groupe~$H$ de~$\R_{+}^\ast$ contenant~$|k^\ast|$. Nous notons, comme d'habitude, $\rho$ la semi-norme spectrale sur~$\As$.

Nous allons effectuer quelques rappels sur la notion d'espace analytique $H$-strict, qui g\'en\'eralise celle d'espace strictement $k$-analytique (que l'on retrouve en choisissant $H=|k^\ast|$). Pour un traitement complet, nous renvoyons \`a~\cite[section~7]{CTdescent}.

\begin{defi}
L'espace $k$-affino\"{\i}de $X=\Mc(\As)$ est dit $H$-strict si, pour tout \'el\'ement~$f$ de~$\As$, on a $\rho(f)\in \sqrt{H}\cup\{0\}$.
\end{defi}

Cette d\'efinition s'\'etend \`a des espaces $k$-analytiques g\'en\'eraux en requ\'erant l'existence d'une structure $H$-stricte, c'est-\`a-dire d'un maillage par des domaines analytiques $H$-stricts. 

\'Enon\c{c}ons maintenant un analogue local de la d\'efinition valant pour les germes d'espaces en un point. Pour ce faire, nous supposerons que le groupe~$H$ n'est pas trivial. Cette condition est n\'ecessaire pour qu'un point admettant un voisinage affino\"{\i}de $H$-strict poss\`ede automatiquement un syst\`eme fondamental de tels voisinages. 

\begin{defi}
Supposons que $H \ne \{1\}$. Soit~$x$ un point de~$X$. Le germe~$(X,x)$ est dit $H$-strict si le point~$x$ poss\`ede un voisinage affino\"{\i}de $H$-strict.

On appelle lieu $H$-strict de~$X$ la partie~$X_{H}$ de~$X$ form\'ee des points~$x$ tels que le germe~$(X,x)$ est $H$-strict.
\end{defi}

De nouveau, cette d\'efinition s'\'etend \`a des espaces $k$-analytiques g\'en\'eraux en requ\'erant l'existence d'un voisinage $H$-strict (non n\'ecessairement affino\"{\i}de).

Donnons un exemple simple. \`A cet effet, introduisons la relation~$\prec$ sur~$\R_{+}$ d\'efinie par
\[a \prec b \textrm{ si } a<b \textrm{ ou } a=b=0.\]

\begin{lem}\label{lem:exemplelieuHstrict}
Supposons que $H \ne \{1\}$. Soient $f_{1},\dotsc,f_{a}, f'_{1},\dotsc,f'_{a} \in \As$ et $r_{1},\dotsc,r_{a} \in \R_{+}$. Soient $g_{1},\dotsc,g_{b},g'_{1},\dotsc,g'_{b}\in\As$ et $s_{1},\dotsc,s_{b} \in \sqrt{H}\cup \{0\}$. Soient $h_{1},\dotsc,h_{c},h'_{1},\dotsc,h'_{c})\in\As$ et $t_{1},\dotsc,t_{c}\in \R_{+}^\ast\setminus \sqrt{H}$. Alors, on a
\begin{align*}
&\bigl(\{x\in X \mid \forall i, |f_{i}(x)|< r_{i}\,|f'_{i}(x)|,\ \forall j, |g_{j}(x)|\le s_{j}\,|g'_{j}(x)|,\ \forall k, |h_{k}(x)|\le t_{k}\,|h'_{k}(x)|\}\bigr)_{H}\\ 
=& \{x\in X \mid \forall i, |f_{i}(x)|< r_{i}\,|f'_{i}(x)|,\ \forall j, |g_{j}(x)|\le s_{j}\,|g'_{j}(x)|,\ \forall k, |h_{k}(x)|\prec t_{k}\,|h'_{k}(x)|\}.
\end{align*}
\end{lem}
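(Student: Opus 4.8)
Le plan est de ramener l'énoncé à une assertion ponctuelle sur les germes et d'exploiter le fait, rappelé ci-dessus d'après~\cite{TemkinII} et~\cite{CTdescent}, que l'appartenance d'un point au lieu $H$-strict se lit sur la réduction du germe de l'espace ambiant en ce point. Notons~$V$ (resp.~$V'$) l'ensemble apparaissant dans le premier (resp. le second) membre de l'égalité à établir. On constate immédiatement que $V'\subseteq V$ et que les seules conditions différant entre les deux ensembles sont celles portant sur les~$h_{k}$~; plus précisément, $V\setminus V'$ est exactement l'ensemble des points $x\in V$ pour lesquels il existe un indice~$k$ vérifiant $|h_{k}(x)| = t_{k}\,|h'_{k}(x)| \neq 0$. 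Comme le caractère $H$-strict d'un germe est une propriété locale et que~$V_{H}$ et~$V'$ sont l'un et l'autre définis ponctuellement, il suffit de fixer un point $x\in V$ et de démontrer que $x\in V_{H}$ si et seulement si $x\in V'$.

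Je commencerais par écarter les conditions ouvertes. En un point $x\in V$, les inégalités strictes $|f_{i}(x)|< r_{i}\,|f'_{i}(x)|$ sont satisfaites, donc le demeurent sur tout un voisinage de~$x$~; elles n'affectent pas le germe $(V,x)$, et l'on peut remplacer~$X$ par un tel voisinage pour supposer que seules subsistent les conditions faisant intervenir les~$g_{j}$ et les~$h_{k}$. Pour chacune des conditions restantes $|p(x)|\le c\,|p'(x)|$, je distinguerais trois cas selon le comportement en~$x$. Si l'inégalité est stricte en~$x$, elle vaut encore sur un voisinage et ne contribue en rien au germe. Si $p(x) = p'(x) = 0$, la réduction de la condition se ramène à la relation triviale $0\le c\cdot 0$ et ne contribue donc pas non plus à la réduction du germe. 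Enfin, si $|p(x)| = c\,|p'(x)| \neq 0$, alors~$p'$ est inversible au voisinage de~$x$, le quotient $\phi = p/p'$ définit une fonction analytique vérifiant $|\phi(x)| = c$, et la condition $|\phi|\le c$ découpe la réduction du germe par une condition homogène faisant intervenir un élément de degré~$c$ de~$\widetilde{\Hs(x)}$. Le point crucial est qu'une telle condition préserve le caractère $\sqrt H$-strict de la réduction exactement lorsque $c\in\sqrt H$.

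Ce dictionnaire local admis, la conclusion s'obtient en combinant les contributions. Les conditions issues des~$g_{j}$ font intervenir des degrés $s_{j}\in\sqrt H$ et n'obstruent donc jamais le caractère $H$-strict, ce qui explique qu'elles demeurent inchangées dans la description de~$V'$. Une condition en un degré hors de~$\sqrt H$ ne peut provenir que d'un~$h_{k}$ en un point où $|h_{k}(x)| = t_{k}\,|h'_{k}(x)|\neq 0$, puisque $t_{k}\notin\sqrt H$. Ainsi, lorsque $x\in V'$, la réduction du germe $\widetilde{(V,x)}$ n'est découpée que par des conditions de degrés appartenant à~$\sqrt H$~; elle est donc $\sqrt H$-stricte et $x\in V_{H}$. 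Lorsque au contraire $x\in V\setminus V'$, l'un des~$h_{k}$ introduit une condition de degré $t_{k}\notin\sqrt H$, la réduction du germe cesse d'être $\sqrt H$-stricte, et $x\notin V_{H}$. C'est exactement l'égalité annoncée $V_{H} = V'$.

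L'étape qui devrait constituer le principal obstacle est le cas des zéros communs $p(x)=p'(x)=0$, tout spécialement pour les~$h_{k}$ avec $t_{k}\notin\sqrt H$. En un tel point, l'ensemble~$V$ n'est pas nécessairement un domaine analytique, de sorte que l'on ne peut parler naïvement de son lieu $H$-strict~; il faut raisonner au niveau des germes réduits et établir que l'annulation simultanée des deux fonctions rend la condition invisible à la réduction, sa forme réduite étant triviale, de sorte qu'aucune condition de degré~$t_{k}$ n'apparaît et que le point demeure $H$-strict. C'est là que les résultats de~\cite{TemkinII} et~\cite{CTdescent} relatifs à la réduction des germes, et non des espaces globaux, interviennent de manière essentielle~; les autres cas sont routiniers une fois le dictionnaire local du paragraphe précédent en place.
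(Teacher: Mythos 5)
Your exclusion step --- the points $x$ with $|h_{k}(x)| = t_{k}\,|h'_{k}(x)| \ne 0$ --- is correct and coincides in substance with the paper's proof: there, one observes directly that on any sufficiently small affinoid neighbourhood $V$ of such a point inside the set, $h'_{k}$ is invertible and the spectral norm of $h_{k}/h'_{k}$ on $V$ is equal to $t_{k} \notin \sqrt{H}$, so $V$ cannot be $H$-strict. Your formulation --- a nonzero homogeneous element of degree $t_{k}$ appears in $\ti{\chi}_{x}(\ti{\As})$, violating Temkin's criterion $\rho(\ti{\chi}_{x}(\ti{\As})) \subset \sqrt{H}\cup\{0\}$ --- is the same obstruction read through \cite{TemkinII} and \cite[theorem~7.5]{CTdescent}; the paper obtains it with bare hands, with no reduction theory at all, which is both shorter and avoids the question of whether the reduced germ of the set is defined.

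The genuine gap is exactly the case you flag yourself: $h_{k}(x) = h'_{k}(x) = 0$ with $t_{k} \notin \sqrt{H}$. You assert that the condition becomes ``invisible to the reduction'' because it reduces to $0 \le t_{k}\cdot 0$, but this is not an argument, and it does not go through along the lines you indicate. First, at such a point the set in question is in general not an analytic domain of $X$, so the graded reduction of the germ in the sense of \cite[section~4]{TemkinII} is not defined there and \cite[theorem~7.5]{CTdescent} cannot be invoked: the very dictionary your proof rests on is unavailable at the one point where you need it. Second, the condition is far from invisible to the germ: in general every neighbourhood of $x$ \emph{inside the set} contains points where $|h_{k}| = t_{k}\,|h'_{k}| \ne 0$. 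For instance, with $X$ a bidisc, $h_{k}=T$, $h'_{k}=S$ and $t_{k}<1$, the maximal points of the polydiscs of radii $(t_{k}\sigma, \sigma)$ lie in $\{|T|\le t_{k}|S|\}$ with equality and converge to the origin as $\sigma \to 0$; around each of them, your own exclusion argument (after shrinking by a Laurent condition $|h'_{k}|\ge \eps$ with $\eps\in\sqrt{H}$, which preserves $H$-strictness) forbids any $H$-strict affinoid domain contained in the set. So any purported $H$-strict neighbourhood of $x$ would have to be covered by $H$-strict affinoid pieces meeting this equality locus, which is precisely what the obstruction of degree $t_{k}$ rules out; the common-zero case is therefore not a routine verification but the delicate heart of the $\prec$ convention. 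Note that the paper's own proof dismisses the inclusion of the right-hand side into the $H$-strict locus as immediate, and that in all its applications (e.g.\ proposition~\ref{prop:invarianceaffinoide}) common zeros of $h_{k}$ and $h'_{k}$ are explicitly excluded, in which case the inclusion really is immediate: each binding constant can be moved into $\sqrt{H}$ locally, by density of $\sqrt{H}$ when $H\ne\{1\}$. As written, your proof is complete only in that common-zero-free situation; the step you defer to \cite{TemkinII} and \cite{CTdescent} is not covered by those references.
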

\begin{proof}
Posons
\[Y = \{x\in X \mid \forall i, |f_{i}(x)|< r_{i}\,|f'_{i}(x)|,\ \forall j, |g_{j}(x)|\le s_{j}\,|g'_{j}(x)|,\ \forall k, |h_{k}(x)|\le t_{k}\,|h'_{k}(x)|\}.\]
Il est imm\'ediat que l'espace apparaissant au membre de droite de l'\'egalit\'e appartient au lieu $H$-strict de~$Y$. Il suffit donc de montrer qu'aucun point~$x$ de~$Y$ pour lequel il existe un~$k$ tel que $|h_{k}(x)| = t_{k}\, |h'_{k}(x)|$ et $h'_{k}(x)\ne 0$ n'y appartient. 

Consid\'erons donc un tel~$x$. Pour tout voisinage affino\"{\i}de~$V$ de~$x$ dans~$Y$ assez petit, la fonction~$h'_{k}$ est inversible sur~$V$ et la norme spectrale de~$h_{k}/h'_{k}$ sur~$V$ est \'egale \`a~$t_{k}$, interdisant ainsi au voisinage d'\^etre $H$-strict. 
\end{proof}

Le fait qu'un germe soit $H$-strict peut se traduire purement en termes de la r\'eduction gradu\'ee de ce germe, telle qu'elle a \'et\'e d\'efinie par M.~Temkin dans~\cite[section~4]{TemkinII}. Rappelons que dans le cas o\`u~$x$ est un point d'un espace $k$-affino\"{\i}de $X=\Mc(\As)$, il s'agit de l'espace \og birationnel \fg{} suivant~:
\[\ti{X}_{x} = \P_{\wti{\Hs(x)}/\ti{k}}\{\ti{\chi}_{x}(\ti{\As})\},\]
c'est-\`a-dire de l'ensemble des anneaux de valuation gradu\'ee de corps des fractions gradu\'e $\wti{\Hs(x)}$ qui contiennent~$\ti{\chi}_{x}(\ti{\As})$ et~$\ti{k}$.

M.~Temkin a \'egalement d\'efini la propri\'et\'e d'\^etre $H$-strict pour de tels espaces (\cf~\cite[paragraphe pr\'ec\'edant la proposition~2.5]{TemkinII} et~\cite[fin de la section~5]{CTdescent}). Nous nous contenterons de mentionner la caract\'erisation suivante dans le cas qui nous int\'eresse ici.

\begin{prop}[M.~Temkin, \protect{\cite[proposition~2.5]{TemkinII}}]
Soit~$x\in X$. L'espace $\P_{\wti{\Hs(x)}/\ti{k}}\{\ti{\chi}_{x}(\ti{\As})\}$ est $H$-strict si, et seulement si, $\rho(\ti{\chi}_{x}(\ti{\As})) \subset \sqrt{H}\cup\{0\}$.
\end{prop}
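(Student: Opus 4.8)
Le plan est de traiter s\'epar\'ement les deux implications, en s'appuyant sur la description des espaces birationnels gradu\'es par leurs cartes affines. Posons $K = \wti{\Hs(x)}$, $A = \ti{\chi}_{x}(\ti{\As})$ et rappelons que les degr\'es des \'el\'ements homog\`enes non nuls de~$\ti{k}$ appartiennent \`a $|k^\ast| \subset H \subset \sqrt{H}$. L'espace $\P_{K/\ti{k}}\{A\}$ est recouvert par des spectres gradu\'es de la forme $\Spec(A[a_{1}/a_{0},\dotsc,a_{n}/a_{0}])$, o\`u les~$a_{i}$ sont des \'el\'ements homog\`enes de~$A$, et une telle carte est $H$-stricte lorsque les degr\'es d'un syst\`eme de g\'en\'erateurs homog\`enes appartiennent \`a~$\sqrt{H}$. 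Pour l'implication r\'eciproque, supposons $\rho(A) \subset \sqrt{H}\cup\{0\}$. Chaque g\'en\'erateur homog\`ene~$a_{i}$ est alors de degr\'e dans~$\sqrt{H}$~; comme~$\sqrt{H}$ est un groupe, chaque quotient $a_{i}/a_{0}$ est homog\`ene de degr\'e dans~$\sqrt{H}$, et, les degr\'es issus de~$\ti{k}$ \'etant d\'ej\`a dans~$\sqrt{H}$, la carte $\Spec(A[a_{1}/a_{0},\dotsc,a_{n}/a_{0}])$ est support\'ee en degr\'es appartenant \`a~$\sqrt{H}$. Elle est donc $H$-stricte, et cela valant pour toutes les cartes d'un recouvrement, l'espace $\P_{K/\ti{k}}\{A\}$ est $H$-strict.

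Pour l'implication directe, supposons l'espace $H$-strict et raisonnons par l'absurde~: soit $a\in A$ un \'el\'ement homog\`ene non nul de degr\'e $r = \rho(a) \notin \sqrt{H}$. L'\'el\'ement~$a$ \'etant inversible dans le corps gradu\'e~$K$, il fournit, dans toute carte affine qui le contient, un \'el\'ement inversible de degr\'e~$r$ de l'anneau gradu\'e associ\'e. Puisque les degr\'es provenant de~$\ti{k}$ demeurent dans~$\sqrt{H}$ et que~$\sqrt{H}$ est satur\'e dans~$\R_{+}^\ast$, stable par produits, quotients et racines, un degr\'e~$r\notin\sqrt{H}$ ne saurait \^etre engendr\'e \`a partir de degr\'es appartenant \`a~$\sqrt{H}$. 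L'ensemble $\rho(A)$, modulo les degr\'es de~$\ti{k}$, est ainsi une donn\'ee intrins\`eque de l'espace birationnel gradu\'e, qui figure dans l'anneau de toute carte affine~; il ne peut donc appara\^itre dans un recouvrement par des cartes $H$-strictes, ce qui contredit l'hypoth\`ese et \'etablit $\rho(A)\subset\sqrt{H}\cup\{0\}$.

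Le point d\'elicat sera cette implication directe. Il s'agit en effet de v\'erifier qu'un degr\'e $r\notin\sqrt{H}$ port\'e par~$A$ ne peut \^etre \og absorb\'e \fg{} par un choix astucieux de recouvrement, autrement dit que l'image de $\rho(A)$ dans $\R_{+}^\ast/\sqrt{|k^\ast|}$ est bien un invariant de l'espace $\P_{K/\ti{k}}\{A\}$, insensible aux pr\'esentations gradu\'ees choisies. Pour l'\'etablir rigoureusement, je m'appuierais sur le crit\`ere local, d\^u \`a M.~Temkin~\cite[section~2]{TemkinII}, caract\'erisant le caract\`ere $H$-strict des espaces birationnels gradu\'es en termes des degr\'es figurant dans les anneaux gradu\'es de leurs cartes, ainsi que sur le caract\`ere divisiblement clos du sous-groupe~$\sqrt{H}$ de~$\R_{+}^\ast$ contenant~$|k^\ast|$.
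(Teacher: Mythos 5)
First, a point of calibration: the paper does not prove this statement at all. It is quoted verbatim as a result of M.~Temkin (\cite[proposition~2.5]{TemkinII}), with the surrounding definitions deferred to \cite{TemkinII} and \cite{CTdescent}. So your attempt cannot be compared with an internal proof; it can only be judged on its own merits against Temkin's argument.

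Your \og si \fg{} direction is essentially correct, and it is nearly immediate: if $\rho(\ti{\chi}_{x}(\ti{\As})) \subset \sqrt{H}\cup\{0\}$, then the space is presented by a set of homogeneous elements whose degrees lie in $\sqrt{H}$ (the degrees coming from $\ti{k}$ lie in $|k^\ast|\subset H$), which is what $H$-strictness requires. The covering by charts $\Spec(A[a_{1}/a_{0},\dotsc,a_{n}/a_{0}])$ is both unnecessary and not how these objects are structured: a space of the form $\P_{K/\ti{k}}\{E\}$ is already the basic \og affine \fg{} object of Temkin's theory, defined by the single set $E$.

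The genuine gap is in the \og seulement si \fg{} direction, and you have located it without closing it. $H$-strictness is an existence statement: there exists \emph{some} set $E'$ of homogeneous elements of degrees in $\sqrt{H}$ with $\P_{K/\ti{k}}\{E'\} = \P_{K/\ti{k}}\{A\}$, where $A = \ti{\chi}_{x}(\ti{\As})$. Your key claim, that the image of $\rho(A)$ in $\R_{+}^\ast/\sqrt{|k^\ast|}$ is an invariant of the space insensitive to the chosen presentation, is false as stated: since graded valuation rings are integrally closed in $K$, one has $\P_{K/\ti{k}}\{a\} = \P_{K/\ti{k}}\{a^2\}$ for any nonzero homogeneous $a$ (it satisfies $T^2 - a^2$), and the degree sets $\{r\}$ and $\{r^2\}$ have distinct images modulo $\sqrt{|k^\ast|}$ unless $r\in\sqrt{|k^\ast|}$. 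Only the divisible subgroup generated by the degrees is invariant, and proving that is precisely the missing content. The correct mechanism is: the equality $\P_{K/\ti{k}}\{E'\} = \P_{K/\ti{k}}\{A\}$ forces every nonzero homogeneous $a\in A$ to lie in every graded valuation ring of $K$ containing $\ti{k}[E']$, hence, by the graded analogue of \og intersection of valuation rings $=$ integral closure \fg{} (the same going-up circle of ideas the paper invokes in its section~2), $a$ is integral over $\ti{k}[E']$: $a^n + c_{1}a^{n-1}+\dotsb+c_{n}=0$ with the $c_{i}$ homogeneous of degree in $\sqrt{H}$. Homogeneity of this equation forces $\deg(c_{i}) = (\deg a)^i$, some $c_{i}$ is nonzero because $K$ is a graded field, so $(\deg a)^i\in\sqrt{H}$ for some $i\ge 1$, and divisibility of $\sqrt{H}$ gives $\deg a\in\sqrt{H}$. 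Note finally that your fallback, invoking \og le crit\`ere local d\^u \`a M.~Temkin \fg{} from \cite[section~2]{TemkinII}, is circular here: the statement you are asked to prove \emph{is} Temkin's proposition~2.5, and the integrality argument above is what has to replace that citation.
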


Les propri\'et\'es pour les germes et leur r\'eduction co\"{\i}ncident. Ce r\'esultat vaut d'ailleurs pour un espace $k$-analytique~$X$ arbitraire.

\begin{thm}[B.~Conrad--M.~Temkin, \protect{\cite[theorem~7.5]{CTdescent}}]
Supposons que $H \ne \{1\}$. Pour tout point~$x$ de~$X$, le germe~$(X,x)$ est $H$-strict si, et seulement si, sa r\'eduction gradu\'ee~$\ti{X}_{x}$ est $H$-stricte.
\end{thm}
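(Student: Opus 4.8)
Le plan est de d\'emontrer s\'epar\'ement les deux implications, en me ramenant dans chaque cas \`a la proposition de M.~Temkin rappel\'ee ci-dessus (\cite[proposition~2.5]{TemkinII}). L'ingr\'edient commun aux deux est que la r\'eduction gradu\'ee~$\ti X_{x}$ ne d\'epend que du germe~$(X,x)$~; on peut donc, \`a loisir, remplacer~$X$ par un voisinage affino\"{\i}de de~$x$.

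L'implication directe est la plus simple. Supposons le germe~$(X,x)$ $H$-strict et choisissons un voisinage affino\"{\i}de $H$-strict $V=\Mc(\Bs)$ de~$x$. On a alors $\rho_{V}(g)\in\sqrt{H}\cup\{0\}$ pour tout $g\in\Bs$, de sorte que l'alg\`ebre $\R_{+}^\ast$-gradu\'ee~$\ti\Bs$ n'a d'\'el\'ement homog\`ene non nul qu'en des degr\'es appartenant \`a~$\sqrt{H}$~: un tel \'el\'ement de degr\'e~$r$ provient en effet d'un~$g\in\Bs$ v\'erifiant $\rho_{V}(g)=r$. Comme le caract\`ere gradu\'e~$\ti\chi_{x}$ pr\'eserve les degr\'es, son image $\ti\chi_{x}(\ti\Bs)$ h\'erite de cette propri\'et\'e, d'o\`u $\rho(\ti\chi_{x}(\ti\Bs))\subset\sqrt{H}\cup\{0\}$. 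D'apr\`es \cite[proposition~2.5]{TemkinII}, la r\'eduction $\ti X_{x}=\ti V_{x}$ est donc $H$-stricte.

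La r\'eciproque constitue le c{\oe}ur de l'\'enonc\'e, et c'est l\`a que j'attends l'obstacle principal. Supposons~$\ti X_{x}$ $H$-stricte. Au niveau affino\"{\i}de, un espace $V=\Mc(\Bs)$ est $H$-strict si et seulement si sa r\'eduction gradu\'ee~$\ti\Bs$ est engendr\'ee sur~$\ti k$ par des \'el\'ements homog\`enes de degr\'es appartenant \`a~$\sqrt{H}$, et cette condition force alors toute fonction de~$\Bs$ \`a avoir une norme spectrale dans~$\sqrt{H}\cup\{0\}$. Il suffit donc de produire un voisinage affino\"{\i}de~$V$ de~$x$ dont la r\'eduction gradu\'ee~$\ti\Bs$ soit $H$-stricte. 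Or la r\'eduction de germe $\ti X_{x}=\P_{\wti{\Hs(x)}/\ti k}\{\ti\chi_{x}(\ti\As)\}$ s'obtient comme une localisation birationnelle des r\'eductions affino\"{\i}des~$\ti\Bs$, lorsque~$V$ parcourt les voisinages affino\"{\i}des de~$x$~; son caract\`ere $H$-strict, de nature locale, doit \^etre r\'ealis\'e par l'une d'entre elles.

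La difficult\'e est pr\'ecis\'ement ce passage \`a la limite~: \'etablir que le caract\`ere $H$-strict de l'objet birationnel~$\ti X_{x}$ descend \`a une r\'eduction affino\"{\i}de~$\ti\Bs$ pour un voisinage~$V$ assez petit. C'est ici qu'intervient de mani\`ere essentielle la th\'eorie des r\'eductions gradu\'ees de germes de M.~Temkin, qui met en dualit\'e les germes et les espaces birationnels gradu\'es et contr\^ole la compatibilit\'e des structures $H$-strictes de part et d'autre. Une fois un tel voisinage~$V$ obtenu, le crit\`ere affino\"{\i}de rappel\'e ci-dessus montre que~$V$ est $H$-strict, ce qui fournit le voisinage cherch\'e et ach\`eve la preuve.
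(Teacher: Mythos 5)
Remarque liminaire~: dans l'article, cet \'enonc\'e est un r\'esultat \emph{rappel\'e}, attribu\'e \`a B.~Conrad et M.~Temkin (\cite[theorem~7.5]{CTdescent}), et le texte n'en donne aucune d\'emonstration~; votre tentative est donc \`a comparer \`a la preuve originale, non \`a un argument interne \`a l'article. Votre implication directe est correcte et compl\`ete~: si le germe $(X,x)$ admet un voisinage affino\"{\i}de $H$-strict $V=\Mc(\Bs)$, tout \'el\'ement homog\`ene non nul de $\ti{\Bs}$ est de degr\'e dans $\sqrt{H}$, le caract\`ere $\ti{\chi}_{x}$ pr\'eserve les degr\'es, d'o\`u $\rho(\ti{\chi}_{x}(\ti{\Bs}))\subset\sqrt{H}\cup\{0\}$, et \cite[proposition~2.5]{TemkinII} conclut, via l'\'egalit\'e $\ti{V}_{x}=\ti{X}_{x}$ pour un voisinage affino\"{\i}de~$V$, invariance que vous invoquez \`a bon droit.

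La r\'eciproque, en revanche --- qui est tout le contenu du th\'eor\`eme ---, n'est pas d\'emontr\'ee. Vous r\'eduisez correctement le probl\`eme \`a produire un voisinage affino\"{\i}de $V=\Mc(\Bs)$ de~$x$ avec $\rho(\Bs)\subset\sqrt{H}\cup\{0\}$, puis vous affirmez que le caract\`ere $H$-strict de l'espace birationnel $\ti{X}_{x}$ \og doit \^etre r\'ealis\'e \fg{} par l'une des r\'eductions affino\"{\i}des, la th\'eorie de Temkin \og contr\^olant la compatibilit\'e \fg{} des deux c\^ot\'es. C'est une p\'etition de principe~: cette descente de l'objet birationnel \`a un voisinage affino\"{\i}de est exactement ce qu'il s'agit de prouver. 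Concr\`etement, l'hypoth\`ese $\rho(\ti{\chi}_{x}(\ti{\As}))\subset\sqrt{H}\cup\{0\}$ dit seulement que tout $f\in\As$ dont le degr\'e r\'esiduel $\rho(f)$ est hors de $\sqrt{H}$ v\'erifie $|f(x)|<\rho(f)$~; il reste \`a \emph{construire} un voisinage qui tue ces \'el\'ements, typiquement en r\'etr\'ecissant par des domaines $\{|f|\le s\}$ avec $|f(x)|<s<\rho(f)$ et $s\in\sqrt{H}$ --- ce qui exige la densit\'e de $\sqrt{H}$ dans $\R_{+}^\ast$, c'est-\`a-dire l'hypoth\`ese $H\ne\{1\}$, que votre texte n'emploie nulle part (signal d'alarme) ---, \`a se ramener \`a un nombre fini de tels~$f$ via la g\'en\'eration finie de l'id\'eal $\ti{I}_{H}$ de~$\ti{\As}$, puis \`a contr\^oler que la norme spectrale de la \emph{nouvelle} alg\`ebre retombe bien dans $\sqrt{H}\cup\{0\}$~: la finitude des morphismes de r\'eduction (\cite[proposition~3.1]{TemkinII}) ne donne a priori que $\rho(\Bs)\subset\sqrt{\la\rho(\As),s\ra_{\mon}}$, ensemble qui contient encore les degr\'es ind\'esirables (comparer le lemme~\ref{lem:rhofr}), de sorte qu'un argument suppl\'ementaire est indispensable. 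Rien de cela n'est esquiss\'e~; en l'\'etat, votre paragraphe final renomme la difficult\'e plus qu'il ne la r\'esout.
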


En combinant ces deux r\'esultats, nous obtenons la caract\'erisation suivante.

\begin{cor}\label{cor:IH}
Supposons que $H \ne \{1\}$. Notons~$\ti{I}_{H}$ l'id\'eal de~$\ti\As$ engendr\'e par l'ensemble des \'el\'ements homog\`enes dont l'ordre n'appartient pas \`a $\sqrt{H}\cup\{0\}$ et~$\ti{Z}_{H}$ le ferm\'e de Zariski de $\Spec(\ti\As)$ qu'il d\'efinit. Alors le lieu $H$-strict de~$X$ n'est autre que le tube de~$\ti{Z}_{H}$~: $X_{H} = \red^{-1}(\ti{Z}_{H})$.
\end{cor}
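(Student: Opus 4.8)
Le plan est d'encha\^iner les deux r\'esultats de Temkin et de Conrad--Temkin rappel\'es ci-dessus, une fois effectu\'ee la traduction purement id\'eale de l'appartenance au tube de~$\ti{Z}_{H}$. Rappelons d'abord que l'application de r\'eduction gradu\'ee envoie un point~$x$ de~$X$ sur l'id\'eal premier gradu\'e $\red(x) = \Ker(\ti{\chi}_{x})$, noyau de la r\'eduction gradu\'ee du caract\`ere $\chi_{x}\colon \As \to \Hs(x)$. Par d\'efinition m\^eme du ferm\'e de Zariski $\ti{Z}_{H} = V(\ti{I}_{H})$, on dispose alors de l'\'equivalence
\[x \in \red^{-1}(\ti{Z}_{H}) \iff \ti{I}_{H} \subset \Ker(\ti{\chi}_{x}).\]

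L'\'etape centrale, et sans doute la plus d\'elicate \`a r\'ediger soigneusement, consiste \`a montrer que cette condition id\'eale \'equivaut \`a $\rho(\ti{\chi}_{x}(\ti{\As})) \subset \sqrt{H}\cup\{0\}$. Comme $\ti{\chi}_{x}$ est un morphisme gradu\'e, l'image d'un \'el\'ement homog\`ene~$\ti{a}$ d'ordre~$r$ est soit nulle, soit homog\`ene de m\^eme ordre~$r$. Puisque~$\ti{I}_{H}$ est engendr\'e par les \'el\'ements homog\`enes dont l'ordre n'appartient pas \`a $\sqrt{H}\cup\{0\}$ et que $\Ker(\ti{\chi}_{x})$ est un id\'eal, l'inclusion $\ti{I}_{H}\subset\Ker(\ti{\chi}_{x})$ \'equivaut \`a l'annulation par~$\ti{\chi}_{x}$ de chacun de ces g\'en\'erateurs. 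C'est pr\'ecis\'ement dire qu'aucun \'el\'ement homog\`ene d'ordre hors de~$\sqrt{H}$ n'a d'image non nulle, autrement dit que tous les ordres apparaissant dans $\ti{\chi}_{x}(\ti{\As})$ sont contenus dans $\sqrt{H}\cup\{0\}$.

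Il ne resterait plus qu'\`a conclure \`a l'aide des deux \'enonc\'es cit\'es. La proposition~2.5 de Temkin identifie la condition $\rho(\ti{\chi}_{x}(\ti{\As}))\subset\sqrt{H}\cup\{0\}$ au caract\`ere $H$-strict de la r\'eduction gradu\'ee $\ti{X}_{x} = \P_{\wti{\Hs(x)}/\ti{k}}\{\ti{\chi}_{x}(\ti{\As})\}$ du germe, tandis que le th\'eor\`eme~7.5 de Conrad--Temkin identifie ce dernier au caract\`ere $H$-strict du germe~$(X,x)$ lui-m\^eme, c'est-\`a-dire \`a la condition $x\in X_{H}$. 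En recollant ces \'equivalences, on obtiendrait bien $X_{H} = \red^{-1}(\ti{Z}_{H})$, les hypoth\`eses $H\ne\{1\}$ et $|k^\ast|\subset H$ assurant que les \'enonc\'es invoqu\'es s'appliquent. La difficult\'e r\'eelle \'etant enti\`erement absorb\'ee par les deux r\'esultats cit\'es, la r\'edaction se r\'eduit \`a ce jeu d'\'equivalences formelles.
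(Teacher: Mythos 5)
Your proposal is correct and takes essentially the same route as the paper's own proof: both reduce the statement to the chain of equivalences $x\in\red^{-1}(\ti{Z}_{H}) \iff \ti{I}_{H}\subset\Ker(\ti{\chi}_{x}) \iff \rho(\ti{\chi}_{x}(\ti{\As}))\subset\sqrt{H}\cup\{0\} \iff x\in X_{H}$, the last step being exactly the combination of Temkin's proposition~2.5 with Conrad--Temkin's theorem~7.5. Your middle paragraph merely spells out, via the gradedness of $\ti{\chi}_{x}$, the equivalence that the paper states in a single line, so the two arguments are the same.
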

\begin{proof}
Soit $x\in X$. Ce point appartient \`a~$X_{H}$ si, et seulement si, on a $\rho(\ti{\chi}_{x}(\ti{\As})) \subset \sqrt{H}\cup\{0\}$, autrement dit, si, et seulement si, tout \'el\'ement de~$\ti{\As}$ dont l'ordre n'appartient pas \`a~$\sqrt{H}$ est envoy\'e sur~0 par~$\ti{\chi}_{x}$. Cette derni\`ere condition \'equivaut encore \`a l'inclusion $\ti{I}_{H} \subset \Ker(\ti{\chi}_{x})$ et le r\'esultat s'ensuit. 
 \end{proof}

\section{Rationalit\'e des sauts}

Soit~$X=\Mc(\As)$ un espace $k$-affino\"{\i}de. Dans cette derni\`ere section, nous d\'emontrons que les \'el\'ements $a_{0},\dotsc,a_{r}$ qui apparaissent dans l'\'enonc\'e du corollaire~\ref{cor:partitionsansrhoAs} appartiennent \`a~$\sqrt{\rho(\As)}$, concluant ainsi la preuve du th\'eor\`eme~\ref{thm:partition}.

\begin{lem}\label{lem:Hstrictconnexe}
Soit~$H$ un sous-groupe non trivial de~$\R_{+}^\ast$ contenant~$|k^\ast|$. Soit~$V$ un domaine affino\"{\i}de connexe de~$X$ d'alg\`ebre~$\Bs$ tel que
\[1 \notin \{uv \mid u,v\in\rho(\Bs), v \notin \sqrt{H}\}.\]
Alors, le lieu $H$-strict de~$V$ est connexe.
\end{lem}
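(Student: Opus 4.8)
The plan is to realise $V_{H}$ as the tube of a connected Zariski closed subset of the graded reduction of~$V$ and then to invoke Theorem~\ref{thm:Bosch}. Write $\ti\Bs$ for the graded reduction of~$\Bs$ and $\red\colon V \to \Spec(\ti\Bs)$ for the graded reduction map. By Corollary~\ref{cor:IH} applied to~$V$, we have $V_{H} = \red^{-1}(\ti{Z}_{H})$, where $\ti{Z}_{H}$ is the Zariski closed subset defined by the ideal $\ti{I}_{H}$ of~$\ti\Bs$ generated by the homogeneous elements whose order lies outside $\sqrt{H}\cup\{0\}$. Since $V$ is connected, the reduction map induces a bijection on $\pi_{0}$, so $\Spec(\ti\Bs)$ is connected; everything then rests on proving that $\ti{Z}_{H}$ is connected as well, after which Theorem~\ref{thm:Bosch} gives the conclusion.

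The heart of the argument is the computation of the degree-one part of~$\ti{I}_{H}$. After decomposing generators and coefficients into homogeneous pieces, a degree-one element of~$\ti{I}_{H}$ is a sum of products $\ti a\, \ti g$ with $\ti g$ homogeneous of some order $s\notin\sqrt{H}$ and $\ti a$ homogeneous of order $s^{-1}$. If such a product is nonzero, then $\ti g\neq 0$ forces $s\in\rho(\Bs)$ and $\ti a\neq 0$ forces $s^{-1}\in\rho(\Bs)$; taking $u=s^{-1}$ and $v=s$ we get $uv=1$ with $u,v\in\rho(\Bs)$ and $v\notin\sqrt{H}$, contradicting the hypothesis $1\notin\{uv \mid u,v\in\rho(\Bs),\ v\notin\sqrt{H}\}$. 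Hence every such product vanishes, so $(\ti{I}_{H})_{1} = 0$ and therefore $(\ti\Bs/\ti{I}_{H})_{1} = \ti\Bs_{1}$. This vanishing is the one genuinely new input of the proof, and translating the multiplicative hypothesis into it is the step I expect to require the most care.

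To finish, I would use that, because the grading group $\R_{+}^\ast$ is torsion-free, the idempotents of any ring it grades lie in the degree-one component: via the grading coaction and restriction to the finitely generated subgroup of orders occurring in a given idempotent, this reduces to the classical fact that the idempotents of $R[t_{1}^{\pm 1},\dotsc,t_{n}^{\pm 1}]$ are constant. Applying this to both $\ti\Bs$ and $\ti\Bs/\ti{I}_{H}$, and using $(\ti\Bs/\ti{I}_{H})_{1} = \ti\Bs_{1}$, the idempotents of $\ti\Bs/\ti{I}_{H}$ coincide with those of~$\ti\Bs$; since $\Spec(\ti\Bs)$ is connected there are no nontrivial ones, so $\ti{Z}_{H} = \Spec(\ti\Bs/\ti{I}_{H})$ is connected. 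Theorem~\ref{thm:Bosch} then shows that $V_{H} = \red^{-1}(\ti{Z}_{H})$ is connected.

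One technical point deserves attention: Theorem~\ref{thm:Bosch} is stated for equidimensional spaces, whereas no such hypothesis appears here. I would either reduce to the equidimensional case or, failing a clean reduction, isolate this as the delicate step, since the connectedness of the tube over a connected Zariski closed set is precisely the content of Theorem~\ref{thm:Bosch} and is not known without equidimensionality (cf. the remark following its statement). The purely algebraic core of the lemma, however, is entirely captured by the identity $(\ti{I}_{H})_{1}=0$, which is where I would concentrate the effort.
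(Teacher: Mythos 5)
Your proposal is correct and essentially identical to the paper's proof: both identify $V_{H}$ with the tube $\red^{-1}(\ti{Z}_{H})$ via Corollary~\ref{cor:IH}, both use the multiplicative hypothesis to show that $\ti{I}_{H}$ contains no nonzero homogeneous element of order~$1$ (your $(\ti{I}_{H})_{1}=0$), so that idempotents of $\wti{\Bs}/\ti{I}_{H}$ lift to idempotents of $\wti{\Bs}$, which are trivial because $V$, hence its reduction, is connected, and both conclude by Theorem~\ref{thm:Bosch}. The equidimensionality caveat you flag is a fair observation, but note that the paper's own proof invokes Theorem~\ref{thm:Bosch} at this point without further comment, so your argument is no weaker than the original on that score.
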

\begin{proof}
Notons~$\ti{I}_{H}$ l'id\'eal de~$\wti\Bs$ engendr\'e par l'ensemble des \'el\'ements homog\`enes dont l'ordre n'appartient pas \`a~$\sqrt{H}\cup\{0\}$ et~$\ti{Z}_{H}$ le ferm\'e de Zariski de $\Spec(\wti\Bs)$ qu'il d\'efinit. D'apr\`es le corollaire~\ref{cor:IH}, le lieu $H$-strict de~$V$ n'est autre que le tube de~$\ti{Z}_{H}$.

Soit~$\ti{f}$ un idempotent homog\`ene non nul de~$\wti\Bs/\ti{I}_{H}$. De l'\'egalit\'e $\tilde{f}^2 = \tilde{f}$, on d\'eduit $\rho(\ti{f})=1$. Par cons\'equent, l'\'el\'ement~$\ti{f}$ se rel\`eve en un \'el\'ement homog\`ene~$\ti{F}$ de~$\wti\Bs$ tel que $\rho(\ti{F})=1$ et $\ti{F}^2-\ti{F} \in \ti{I}_{H}$. Par hypoth\`ese, l'id\'eal~$\ti{I}_{H}$ ne contient aucun \'el\'ement d'ordre~1. On en d\'eduit que~$\ti{F}$ est idempotent. Puisque~$V$ est connexe, sa r\'eduction l'est aussi et~$\ti{F}$ ne peut \^etre qu'un idempotent trivial. Il en est donc de m\^eme pour~$\ti{f}$.

Nous venons de montrer que~$\ti{Z}_{H}$ est un ferm\'e de Zariski connexe de~$\ti{V}$. D'apr\`es le th\'eor\`eme~\ref{thm:Bosch}, son tube est encore connexe.
\end{proof}

Pour toute partie~$P$ de~$\R_{+}^\ast$, nous noterons $\la P \ra_{\mon}$ (resp. $\la P \ra$) le sous-mono\"{\i}de (resp. sous-groupe) de~$\R_{+}^\ast$ qu'elle engendre.  Pour tout sous-mono\"{\i}de~$M$ de~$\R_{+}^\ast$, nous noterons~$\sqrt{M}$ le sous-mono\"{\i}de divisible de~$\R_{+}^\ast$ qu'il engendre.

\begin{lem}\label{lem:rhofr}
Soient $f_{1},\dotsc,f_{n}, g \in\As$ tels que $(f_{1},\dotsc,f_{n},g)=(1)$ et $r_{1},\dotsc,r_{n} \in\R_{+}^\ast$. Nous avons 
\[\rho(\{x\in X \mid \forall i, |f_{i}(x)| \le r_{i}\, |g(x)|\}) \subset \sqrt{\la \rho(\As),r_{1},\dotsc,r_{n}\ra_{\mon}}.\]
\end{lem}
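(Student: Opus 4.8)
The plan is to realise the set in question as a genuine affinoid (Laurent) domain that is finite over a relative polydisc, and then to read off its spectral values from a graded integral dependence relation. First I would observe that the hypothesis $(f_{1},\dotsc,f_{n},g)=(1)$ forces $g$ not to vanish on
$Y := \{x\in X \mid \forall i,\ |f_{i}(x)|\le r_{i}\,|g(x)|\}$: at a point where $g(x)=0$ the inequalities give $f_{i}(x)=0$ for every~$i$, so that $x$ would be a common zero of $f_{1},\dotsc,f_{n},g$, which is excluded. Hence $g$ is invertible on the compact set~$Y$, the latter is the Laurent domain of~$X$ with algebra
\[\Bs = \Cs/(gT_{1}-f_{1},\dotsc,gT_{n}-f_{n}), \qquad \Cs := \As\la r_{1}^{-1}T_{1},\dotsc,r_{n}^{-1}T_{n}\ra,\]
and the inclusion $Y\hookrightarrow\Mc(\Cs)$ is a closed immersion. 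In particular $\Bs$ is a finite $\Cs$-module, being a cyclic quotient of~$\Cs$.

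Next I would record two facts. On the one hand, the spectral seminorm of the relative polydisc algebra is the Gauss seminorm: for $F=\sum_{\alpha} a_{\alpha}T^{\alpha}\in\Cs$ one has $\rho(F)=\max_{\alpha} \rho(a_{\alpha})\,r_{1}^{\alpha_{1}}\dotsb r_{n}^{\alpha_{n}}$ (this is standard, $\rho$ being power-multiplicative on~$\As$), so that every nonzero value of~$\rho$ on~$\Cs$ is a single product $\rho(a_{\alpha})\,r_{1}^{\alpha_{1}}\dotsb r_{n}^{\alpha_{n}}$ and therefore lies in $\la\rho(\As),r_{1},\dotsc,r_{n}\ra_{\mon}$. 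On the other hand, since $Y\hookrightarrow\Mc(\Cs)$ is a closed immersion, its graded reduction $\ti\chi\colon\wti\Cs\to\wti\Bs$ is finite by \cite[proposition~3.1~(iii)]{TemkinII}; thus $\wti\Bs$ is integral over the graded subalgebra $\ti\chi(\wti\Cs)$. A nonzero homogeneous element of $\ti\chi(\wti\Cs)$ of order~$t$ is the image of some~$\ti F$ with $F\in\Cs$ and $\rho(F)=t$, so by the first fact its order~$t$ belongs to $\la\rho(\As),r_{1},\dotsc,r_{n}\ra_{\mon}$.

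Finally I would extract the result purely from the grading. Let $\phi\in\Bs$ with $\rho(\phi)=s>0$; its image $\ti\phi$ is a nonzero homogeneous element of~$\wti\Bs$ of order~$s$ (nonzero because $\rho(\phi^{m})=s^{m}>0$ by power-multiplicativity, whence $\ti\phi^{\,m}\ne 0$). Being integral over $\ti\chi(\wti\Cs)$, it satisfies a monic relation which, after projecting onto the homogeneous component of order~$s^{m}$, takes the form $\ti\phi^{\,m}+\sum_{j<m} c_{j}\,\ti\phi^{\,j}=0$ with each~$c_{j}$ homogeneous in~$\ti\chi(\wti\Cs)$ of order~$s^{m-j}$ (or zero). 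As $\ti\phi^{\,m}\ne 0$, some~$c_{j}$ with $j<m$ is nonzero, whence $s^{m-j}$ is the order of a nonzero homogeneous element of~$\ti\chi(\wti\Cs)$ and so lies in $\la\rho(\As),r_{1},\dotsc,r_{n}\ra_{\mon}$; since $m-j\ge 1$ this yields $s\in\sqrt{\la\rho(\As),r_{1},\dotsc,r_{n}\ra_{\mon}}$, as desired.

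The main obstacle is precisely the passage from~$\rho$ on~$\Cs$ to~$\rho$ on the quotient~$\Bs$: the reduction map is not surjective and genuinely new spectral values appear (typically a root of a value of~$\Cs$), which is why the statement must involve the divisible hull $\sqrt{\phantom{x}}$ rather than the monoid itself. The device that tames this is the graded finiteness of the reduction of a closed immersion; once one grants \cite[proposition~3.1]{TemkinII}, everything reduces to the elementary degree bookkeeping above, the only remaining care being the identification of the spectral seminorm of~$\Cs$ with the Gauss seminorm.
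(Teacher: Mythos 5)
Your proposal is correct and takes essentially the same route as the paper: the rational-domain presentation $\As\{r_{1}^{-1}T_{1},\dotsc,r_{n}^{-1}T_{n}\}/(f_{i}-T_{i}\,g)$, the Gauss-norm identification of $\rho$ on the relative polydisc algebra, and the finiteness of the graded reduction of the quotient map via \cite[proposition~3.1~(iii)]{TemkinII}. The only difference is expository: you spell out the graded integral-dependence bookkeeping that the paper compresses into its final \emph{on en d\'eduit}.
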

\begin{proof}
La partie~$V$ de~$X$ d\'efinie par la conjonction des in\'egalit\'es $|f_{i}|\le r_{i}\, |g|$ est un domaine affino\"{\i}de d'alg\`ebre
\[\As_{V} = \As\{r_{1}^{-1}\, T_{1},\dotsc, r_{n}^{-1}\, T_{n}\}/(f_{1}-T_{1}\,g,\dotsc, f_{n}-T_{n}\,g).\]
Par d\'efinition de la norme sur $\Bs = \As\{r_{1}^{-1}\, T_{1},\dotsc, r_{n}^{-1}\, T_{n}\}$, nous avons $\rho(\Bs) = \la \rho(\As), r_{1},\dotsc,r_{n}\ra_{\mon}$. D'apr\`es~\cite[proposition~3.1~(iii)]{TemkinII}, le morphisme $\ti{\Bs} \to \ti{\As}_{V}$ induit par le morphisme quotient est fini. On en d\'eduit que $\rho(\As_{V}) \subset \sqrt{\la \rho(\As), r_{1},\dotsc,r_{n}\ra_{\mon}}$.
\end{proof}

\begin{cor}
Soit~$H$ un sous-groupe de~$\R_{+}^\ast$ contenant~$|k^\ast|$. Soit~$Y$ un espace $k$-analytique $H$-strict. Soient $f_{1},\dotsc,f_{n}, g \in\Os(Y)$ sans z\'eros communs et $r_{1},\dotsc,r_{n} \in\R_{+}^\ast$. Alors l'espace $k$-analytique 
\[\{x\in X \mid \forall i, |f_{i}(x)| \le r_{i}\, |g(x)|\}\]
est $\la H,r_{1},\dotsc,r_{n}\ra$-strict.
\end{cor}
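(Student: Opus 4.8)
Le plan est de ramener cet \'enonc\'e local au cas affino\"{\i}de d\'ej\`a trait\'e dans le lemme~\ref{lem:rhofr}. Posons $H' = \la H, r_{1},\dotsc,r_{n}\ra$ et
\[W = \{x\in Y \mid \forall i,\ |f_{i}(x)| \le r_{i}\, |g(x)|\}.\]
Comme les fonctions $f_{1},\dotsc,f_{n},g$ n'ont pas de z\'ero commun, la fonction~$g$ ne peut s'annuler en un point de~$W$, de sorte que~$W$ est un domaine analytique de~$Y$. Puisque~$Y$ est $H$-strict, je choisirais un recouvrement de~$Y$ par des domaines affino\"{\i}des $H$-stricts $V = \Mc(\As_{V})$, c'est-\`a-dire v\'erifiant $\rho(\As_{V}) \subset \sqrt{H}\cup\{0\}$. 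Les intersections $W\cap V$ recouvrent alors~$W$, et chacune, \'etant un domaine rationnel de~$V$, est un domaine affino\"{\i}de.

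Sur chacun de ces~$V$, les restrictions de $f_{1},\dotsc,f_{n},g$ n'ont toujours pas de z\'ero commun et engendrent donc l'id\'eal unit\'e de~$\As_{V}$. J'appliquerais alors le lemme~\ref{lem:rhofr}, qui donne
\[\rho(\As_{W\cap V}) \subset \sqrt{\la \rho(\As_{V}), r_{1},\dotsc,r_{n}\ra_{\mon}},\]
puis j'\'etablirais que ce dernier ensemble est contenu dans $\sqrt{H'}\cup\{0\}$. Ce point rel\`eve d'un calcul imm\'ediat sur les sous-groupes divisibles de~$\R_{+}^\ast$~: les valeurs non nulles de~$\rho(\As_{V})$ appartiennent \`a $\sqrt{H}\subset \sqrt{H'}$ et les~$r_{i}$ \`a $H'\subset\sqrt{H'}$~; comme~$\sqrt{H'}$ est un groupe divisible, il contient le mono\"{\i}de $\la \rho(\As_{V}), r_{1},\dotsc,r_{n}\ra_{\mon}$ ainsi que son enveloppe divisible. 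J'en d\'eduirais que chaque~$W\cap V$ est $H'$-strict.

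Pour conclure, j'observerais que le recouvrement $\{W\cap V\}$ de~$W$ par des domaines affino\"{\i}des $H'$-stricts constitue une structure $H'$-stricte, ce qui est pr\'ecis\'ement la d\'efinition du caract\`ere $H'$-strict pour un espace $k$-analytique g\'en\'eral.

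L'\'etape la plus d\'elicate me semble \^etre le passage du local au global. Il faudra v\'erifier que le recouvrement $\{W\cap V\}$ est bien un maillage au sens de Berkovich --- ce qui r\'esultera de ce qu'il provient, par restriction \`a~$W$, d'un maillage $H$-strict de~$Y$ --- et que la condition d'absence de z\'ero commun, pos\'ee globalement sur~$Y$, se transmet \`a chaque domaine~$V$ afin de garantir l'\'egalit\'e $(f_{1},\dotsc,f_{n},g)=(1)$ dans~$\As_{V}$. Le calcul sur les groupes divisibles, en revanche, ne pr\'esente aucune difficult\'e.
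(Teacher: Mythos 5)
Votre preuve est correcte et suit exactement la voie que l'article laisse implicite : le corollaire y est \'enonc\'e sans d\'emonstration comme cons\'equence imm\'ediate du lemme~\ref{lem:rhofr}, et l'argument attendu est pr\'ecis\'ement votre localisation --- recouvrir~$Y$ par un maillage de domaines affino\"{\i}des $H$-stricts, appliquer le lemme~\ref{lem:rhofr} \`a chaque $V$ (les fonctions y engendrant l'id\'eal unit\'e par le Nullstellensatz affino\"{\i}de) et conclure par le calcul sur les mono\"{\i}des divisibles. Les deux points que vous signalez comme d\'elicats sont bien routiniers : la restriction d'un maillage \`a un domaine analytique en est un, et les intersections deux \`a deux se traitent par le m\^eme argument.
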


Rappelons que nous avons d\'efini une relation~$\prec$ sur~$\R_{+}$ par
\[a \prec b \textrm{ si } a<b \textrm{ ou } a=b=0.\]

\begin{lem}\label{lem:adherence}
Soit~$H$ un sous-groupe de~$\R_{+}^\ast$ contenant~$|k^\ast|$. Soit~$Y$ un espace $k$-analytique $H$-strict. Soient $f, g \in\Os(Y)$ et $r \in\R_{+}^\ast \setminus \sqrt{H}$. Pour toute composante irr\'eductible~$F$ de $\{y\in Y \mid |f(y)| \le r\, |g(y)|\}$, nous avons soit
\[F \subset \{y\in Y \mid f(y)=g(y)=0\},\]
soit
\[\overline{F \cap \{y\in Y \mid |f(y)| < r\, |g(y)|\}} =F,\]
o\`u le surlignement d\'esigne l'adh\'erence topologique.

Dans tous les cas, $F \cap \{y\in Y \mid |f(y)| \prec r\, |g(y)|\}$ est dense dans~$F$. En particulier, pour toute composante connexe~$C$ de $\{y\in Y \mid |f(y)| \le r\, |g(y)|\}$, $C \cap \{y\in Y \mid |f(y)| \prec r\, |g(y)|\}$ est dense dans~$C$.
\end{lem}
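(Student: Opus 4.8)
Les deux derniers énoncés (« dans tous les cas » et « en particulier ») se déduisent formellement de l'alternative. En effet, si $F \subset \{y\in Y\mid f(y)=g(y)=0\}$, alors $|f| = r\,|g| = 0$ en tout point de~$F$, de sorte que $F \cap \{|f| \prec r\,|g|\} = F$ est dense dans~$F$; dans le cas contraire, l'inclusion $\{|f| < r\,|g|\} \subset \{|f| \prec r\,|g|\}$ et la densité de $F \cap \{|f| < r\,|g|\}$ suffisent. Le cas d'une composante connexe~$C$ s'obtient ensuite en écrivant~$C$ comme réunion, localement finie, des composantes irréductibles de $W=\{|f|\le r\,|g|\}$ qu'elle contient. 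Il reste donc à établir l'alternative. On peut raisonner localement, en supposant~$Y=\Mc(\As)$ affinoïde et $H$-strict. Soit~$F$ une composante irréductible de~$W$; si $F \subset \{f=g=0\}$, la première possibilité est réalisée, et nous supposons donc le contraire. Le fermé de Zariski $F\cap\{f=g=0\}$ est alors propre, donc d'intérieur vide dans~$F$; en dehors de lui, $g$ ne s'annule pas et $h=f/g$ définit une fonction analytique telle que $|h|\le r$. D'après le lemme~\ref{lem:exemplelieuHstrict}, le lieu $H$-strict de~$W$ n'est autre que $\{|f|\prec r\,|g|\}$; compte tenu de ce qui précède, il suffit donc de montrer que $W_{H}$ est dense dans~$F$.

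Pour cela, l'idée est de lire la situation sur la réduction graduée au sens de Temkin. Considérons l'application de réduction $\red\colon F\to\Spec(\ti{\As}_{F})$. Le corollaire~\ref{cor:IH}, appliqué à~$W$, identifie $W_{H}$ au tube d'un fermé de Zariski gradué~$\ti Z_{H}=V(\ti I_{H})$, où~$\ti I_{H}$ est engendré par les éléments homogènes de degré hors de $\sqrt{H}\cup\{0\}$ (concrètement, $h$ a une réduction~$\ti h$ homogène de degré~$r$, qui appartient à~$\ti I_{H}$, et l'on a $\{|f|<r\,|g|\}=\red^{-1}(\{\ti h=0\})$ sur cette région). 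L'observation cruciale, et c'est le seul endroit où intervient l'hypothèse $r\notin\sqrt{H}$, est la suivante : pour tout point~$\ti x$ de~$\Spec(\ti{\As}_{F})$ dont le corps résiduel gradué~$\kappa(\ti x)$ a son groupe des degrés contenu dans~$\sqrt{H}$, tout élément homogène de degré hors de~$\sqrt{H}$ s'envoie sur~$0$ dans~$\kappa(\ti x)$, de sorte que $\ti I_{H}\subset\Ker(\ti\chi_{\ti x})$, c'est-à-dire $\ti x\in\ti Z_{H}$. Autrement dit, $\ti Z_{H}$ contient tous les points dont le support résiduel est contenu dans~$\sqrt H$.

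La difficulté principale réside dans le pas restant : montrer que le tube $\red^{-1}(\ti Z_{H})$ est dense dans~$F$. Compte tenu du paragraphe précédent, il suffirait de savoir que l'image réciproque par~$\red$ de l'ensemble des points à support résiduel dans~$\sqrt H$ est dense dans~$F$. Dans le cas strictement $k$-affinoïde, il s'agit des tubes des points fermés, et l'on conclut par la densité des points rigides, dont la réduction est un point fermé. Pour traiter le cas $H$-strict général, je procéderais comme dans la preuve du théorème~\ref{thm:Bosch} : on choisit un polyrayon $k$-libre~$\br$ tel que $X\ho_{k}k_{\br}$ soit strictement $k_{\br}$-affinoïde tout en préservant la condition $r\notin\sqrt{|k_{\br}^\ast|}$ — ce qui est possible, précisément parce que $r\notin\sqrt{H}=\sqrt{\rho(\As)}$, en prenant pour~$\br$ un système de générateurs de~$\sqrt H$ modulo~$\sqrt{|k^\ast|}$ et en veillant à n'y pas incorporer~$r$. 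On applique alors l'énoncé strictement affinoïde à $X\ho_{k}k_{\br}$, puis on redescend grâce à la surjectivité du morphisme de changement de base~$\pi$, comme dans le théorème~\ref{thm:Bosch}, en utilisant au passage que $\ti\pi$ préserve la connexité et le support des corps résiduels (lemme~\ref{lem:rhofr}). La densité de $F\cap\{|f|\prec r\,|g|\}$ dans~$F$ s'ensuit, ce qui établit l'alternative et achève la démonstration.
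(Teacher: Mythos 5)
Your formal reductions are fine: deducing the ``dans tous les cas'' and ``en particulier'' statements from the alternative, the localization to an $H$-strict affinoid, the identification via the lemme~\ref{lem:exemplelieuHstrict} and the corollaire~\ref{cor:IH} of $\{|f|\prec r\,|g|\}$ with the tube of~$\ti{Z}_{H}$, and the final descent along the surjection $\pi\colon X\ho_{k}k_{\br}\to X$. The genuine gap is precisely the step you yourself flag as ``la difficult\'e principale'': the density of that tube in~$F$. You settle it by invoking ``la densit\'e des points rigides''. But density of rigid points is a theorem about \emph{strictly} $k$-affinoid spaces, and neither $W=\{|f|\le r\,|g|\}$ nor its component~$F$ is strict --- that is the whole point of the hypothesis $r\notin\sqrt{H}$, and it persists after your base change to~$k_{\br}$. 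In a non-strict affinoid, rigid points can fail to be dense: the circle $\{x \mid |T(x)|=r\}$ with $r\notin\sqrt{|k^\ast|}$ contains no rigid point at all. Worse, by your own ``observation cruciale'' every rigid point of~$W$ automatically lies in $\{|f|\prec r\,|g|\}$, so the assertion that rigid points are dense in~$F$ is essentially \emph{equivalent} to the lemma you are proving: the argument is circular. (Two smaller slips: $\sqrt{H}=\sqrt{\rho(\As)}$ is in general only the inclusion $\rho(\As)\subset\sqrt{H}\cup\{0\}$, which is all you need; and the lemme~\ref{lem:rhofr} concerns spectral values of affinoid subdomains and says nothing about $\ti\pi$ preserving ``le support des corps r\'esiduels''.)

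What is missing is an argument excluding a point $y\in F$, with $F\not\subset\{f=g=0\}$, admitting a whole neighbourhood in~$F$ contained in $\{|f|=r\,|g|\}$. The paper supplies it directly, with no reduction theory at all: if such a neighbourhood~$U$ exists and $g(z)\ne 0$ at some $z\in U$, then $f$ is invertible near~$z$ and the spectral norm of $g/f$ on every sufficiently small compact neighbourhood of~$z$ in~$Y$ equals~$r^{-1}$; covering such a neighbourhood by finitely many $H$-strict affinoid domains, one of them must realize this supremum, contradicting $r^{-1}\notin\sqrt{H}\cup\{0\}$. Hence $g$ vanishes on~$U$, hence (by irreducibility) on all of~$F$, and then so does~$f$, which gives the alternative. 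This spectral-norm contradiction, which is where the $H$-strictness of~$Y$ actually enters, is the single idea your proposal lacks; your concluding base-change step (which in the paper appears only to handle the trivial case $H=\{1\}$, via~$k_{s}$) is sound as far as it goes, but it cannot substitute for the strictly affinoid core that your argument leaves unproved.
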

\begin{proof}
Commen\c{c}ons par traiter le cas o\`u~$H$ n'est pas trivial. Soit~$F$ une composante irr\'eductible de $\{y\in Y \mid |f(y)| \le r\, |g(y)|\}$. Supposons qu'il existe un point~$y$ de~$F$ qui ne soit pas dans l'adh\'erence de $G = F \cap \{y\in Y \mid |f(y)| < r\, |g(y)|\}$. Il existe alors un voisinage~$U$ du point~$y$ dans~$F$ contenu dans $\{y\in Y \mid |f(y)| = r\, |g(y)|\}$. Nous pouvons supposer que~$U$ est ouvert. 

Soit~$z$ un point de~$U$ en lequel~$g$ ne s'annule pas. Ce point poss\`ede alors un voisinage~$V$ dans~$Y$ sur lequel la fonction~$f$ est inversible et la norme spectrale de~$g/f$ \'egale \`a~$r^{-1}$. Puisque $|g(z)|/|f(z)| = r^{-1}$, ces propri\'et\'es valent pour tout voisinage de~$z$ assez petit et nous pouvons donc supposer que~$V$ est compact et $H$-strict. En recouvrant~$V$ par un nombre fini de domaines affino\"{\i}des $H$-stricts, on aboutit \`a une contradiction.

Nous venons de montrer que la fonction~$g$ s'annule sur~$U$. On en d\'eduit qu'elle est nulle sur~$F$.

Supposons, maintenant, que $H=\{1\}$. On choisit alors un \'el\'ement $s \in \R_{+}^\ast \setminus r^\Z$ et on se ram\`ene au cas pr\'ec\'edent \`a l'aide du morphisme de projection $\pi \colon X \ho_{k} k_{s} \to X$ (voir les rappels pr\'ec\'edant le th\'eor\`eme~\ref{thm:Bosch} pour la notation~$k_{s}$). En effet, l'espace $X \ho_{k} k_{s}$ est alors $\la H, s \ra$-strict et $r\notin \sqrt{\la H, s\ra}$.
\end{proof}

\begin{prop}\label{prop:invarianceaffinoide}
Soient $f, g \in\As$ tels que $(f,g)=(1)$ et $r \in\R_{+}^\ast \setminus \sqrt{\rho(\As)}$. L'application naturelle
\[\pi_{0}(\{x\in X \mid |f(x)| < r\, |g(x)|\}) \to \pi_{0}(\{x\in X \mid |f(x)|\le r\, |g(x)|\})\]
est bijective.

Le m\^eme r\'esultat vaut en rempla\c{c}ant les composantes connexes par les composantes irr\'eductibles.
\end{prop}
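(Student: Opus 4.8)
Le plan est le suivant. On veut montrer que l'inclusion naturelle $\{|f|<r\,|g|\}\hookrightarrow\{|f|\le r\,|g|\}$ induit une bijection sur les $\pi_{0}$ (et sur les composantes irréductibles). La surjectivité est essentiellement gratuite : on l'obtient immédiatement du lemme~\ref{lem:adherence}. En effet, posons $H=\rho(\As)$ ; comme $|k^\ast|\subset\rho(\As)=H$ et que $r\in\R_{+}^\ast\setminus\sqrt{\rho(\As)}=\R_{+}^\ast\setminus\sqrt{H}$, l'espace affinoïde~$X$ est $H$-strict et les hypothèses du lemme~\ref{lem:adherence} sont satisfaites. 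Ce lemme affirme que pour toute composante connexe~$C$ de $\{|f|\le r\,|g|\}$, la partie $C\cap\{|f|\prec r\,|g|\}$ est dense dans~$C$. Or les points où $|f|\prec r\,|g|$ sont exactement ceux où $|f|<r\,|g|$ ou bien $f=g=0$, c'est-à-dire ceux du lieu ouvert $\{|f|<r\,|g|\}$ (puisque $r>0$, la condition $f(x)=g(x)=0$ entraîne $|f(x)|=0<r\cdot 0=r\,|g(x)|$). Chaque composante~$C$ rencontre donc $\{|f|<r\,|g|\}$, ce qui fournit la surjectivité.

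Pour l'injectivité, l'obstacle est de vérifier que deux points de $\{|f|<r\,|g|\}$ qui deviennent connectés dans le fermé $\{|f|\le r\,|g|\}$ l'étaient déjà dans l'ouvert. De façon équivalente, il suffit de montrer que l'intersection de l'ouvert $\{|f|<r\,|g|\}$ avec chaque composante connexe~$C$ du fermé est elle-même connexe. C'est ici que l'on exploite la structure $H$-stricte via le lemme~\ref{lem:Hstrictconnexe}. L'idée directrice est la suivante : on considère le domaine affinoïde~$C$ (ou plutôt un domaine affinoïde connexe le contenant au voisinage) et l'on remarque que $\{|f|<r\,|g|\}$ se décrit, à l'intérieur de~$C$, comme le lieu $H'$-strict pour un groupe convenable $H'=\la H,r\ra$. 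En effet, d'après le corollaire suivant le lemme~\ref{lem:rhofr}, un domaine défini par une inégalité $|f|\le r\,|g|$ sur un espace $H$-strict est $\la H,r\ra$-strict, et comme $r\notin\sqrt{H}$ le passage de l'inégalité large à l'inégalité stricte correspond précisément au passage au lieu $H$-strict, décrit par le lemme~\ref{lem:exemplelieuHstrict}.

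Concrètement, je procéderais ainsi. Soit~$C$ une composante connexe de $V=\{|f|\le r\,|g|\}$ ; c'est un domaine affinoïde connexe, d'algèbre~$\Bs$. Le lemme~\ref{lem:exemplelieuHstrict}, appliqué avec la seule inégalité $|f|\le r\,|g|$ (donc $a=0$, $b=0$, $c=1$, $t_{1}=r\in\R_{+}^\ast\setminus\sqrt{H}$), identifie le lieu $H$-strict de~$C$ au lieu $\{x\in C\mid |f(x)|\prec r\,|g(x)|\}$, qui n'est autre que $C\cap\{|f|<r\,|g|\}$. Il reste à vérifier l'hypothèse arithmétique du lemme~\ref{lem:Hstrictconnexe}, à savoir $1\notin\{uv\mid u,v\in\rho(\Bs),\ v\notin\sqrt{H}\}$ : c'est ici le c{\oe}ur technique. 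Le lemme~\ref{lem:rhofr} donne $\rho(\Bs)\subset\sqrt{\la\rho(\As),r\ra_{\mon}}=\sqrt{\la H,r\ra_{\mon}}$ ; une relation $uv=1$ avec $u,v$ dans ce monoïde divisible forcerait~$v$, donc $u=v^{-1}$, à être une racine d'un produit de puissances de~$r$ et d'éléments de~$H$, et un comptage des exposants de~$r$ modulo~$\sqrt{H}$ (en utilisant $r\notin\sqrt{H}$) montrerait que l'exposant de~$r$ dans~$v$ doit être nul, c'est-à-dire $v\in\sqrt{H}$, contredisant $v\notin\sqrt{H}$. Le lemme~\ref{lem:Hstrictconnexe} s'applique alors et garantit que $C\cap\{|f|<r\,|g|\}$ est connexe. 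La densité fournie par le lemme~\ref{lem:adherence} assure qu'il est non vide, donc que chaque composante de l'ouvert rencontrant~$C$ est unique : l'application est injective. L'étape la plus délicate est donc la vérification de la condition arithmétique sur $\rho(\Bs)$ garantissant l'absence de relation $uv=1$, le reste n'étant qu'assemblage des lemmes précédents. Le cas des composantes irréductibles se traite identiquement, en substituant partout \og irréductible \fg{} à \og connexe \fg{} et en invoquant la variante irréductible du théorème~\ref{thm:Bosch} sous-jacente au lemme~\ref{lem:Hstrictconnexe}.
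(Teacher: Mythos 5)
Your overall strategy is the paper's own: non-emptiness of $C\cap\{|f|<r\,|g|\}$ from the density statement of the lemme~\ref{lem:adherence} (using $(f,g)=(1)$ to identify the $\prec$-locus with the strict-inequality locus), and connectedness by combining the lemme~\ref{lem:rhofr}, the lemme~\ref{lem:exemplelieuHstrict} and the lemme~\ref{lem:Hstrictconnexe} with $H=\sqrt{\rho(\As)}$, the arithmetic condition $1\notin\{uv \mid u,v\in\rho(\Bs),\ v\notin\sqrt H\}$ being checked by your exponent count in $\sqrt{\la \rho(\As),r\ra_{\mon}}$ (which is correct, and correctly exploits the fact that one works with the \emph{monoid}, so both exponents of~$r$ are nonnegative and cannot cancel). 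Your shortcut of applying the two lemmas directly to the component~$C$ is legitimate and even a bit simpler than the paper, which instead uses the lemme~\ref{lem:injectif} to realize~$C$ as $D\cap\{|f|\le r\,|g|\}$ inside a connected component~$D$ of $\{|f|\le r'\,|g|\}$ with $r'\in\sqrt{|k^\ast|}$, precisely so as to control the spectral values of the algebra of the component via the lemme~\ref{lem:rhofr}. To make your version airtight you must add the one line the paper replaces by its idempotent trick: the lemme~\ref{lem:rhofr} bounds $\rho(\As_{V})$, not $\rho(\Bs)$; since $\Bs$ is a direct factor of~$\As_{V}$, one has $\rho(\Bs)\subset\rho(\As_{V})$, and only then does your condition on $\rho(\Bs)$ follow. (Also, your aside that $f(x)=g(x)=0$ would put~$x$ in $\{|f|<r\,|g|\}$ is false, since $0<0$ fails; it is harmless only because $(f,g)=(1)$ rules out common zeros, which is the actual reason the two loci coincide.)

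There are, however, two genuine gaps. First, every lemma you invoke (lemme~\ref{lem:exemplelieuHstrict}, lemme~\ref{lem:Hstrictconnexe}) requires $H\neq\{1\}$, while the proposition carries no such hypothesis: when $k$ is trivially valued and $\rho(\As)\subset\{0,1\}$ (e.g.\ the closed unit disc over a trivially valued field), your group $H$ is trivial and your entire injectivity argument is vacuous. The paper treats this case separately: choose $s\in\R_{+}^\ast\setminus\sqrt{\la\rho(\As),r\ra}$ and descend along $\pi\colon X\ho_{k}k_{s}\to X$, using (as in the proof of the th\'eor\`eme~\ref{thm:Bosch}) that this map preserves connectedness by direct and inverse image; your proposal needs this extra step. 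Second, your treatment of irreducible components --- \og substituer irr\'eductible \`a connexe \fg{} and invoking an \og irreducible variant \fg{} of the th\'eor\`eme~\ref{thm:Bosch} --- does not work: no such variant exists in the paper, and the tube and idempotent arguments underlying the lemme~\ref{lem:Hstrictconnexe} are intrinsically about connectedness. The paper's fix is to apply the already-proved connected-component statement to the normalization of~$X$, an operation that does not change $\sqrt{\rho(\As)}$ and converts irreducible components into connected components; some such reduction is needed in your write-up as well.
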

\begin{proof}
Posons $V = \{x\in X \mid |f(x)|\le r\,|g(x)|\}$. Nous pouvons supposer que cette partie n'est pas vide. Consid\'erons l'une de ses composantes connexes~$C$. Il suffit de montrer que $C \cap \{x\in X \mid |f(x)| < r\, |g(x)|\}$ est connexe et non vide. La derni\`ere propri\'et\'e d\'ecoule du lemme~\ref{lem:adherence} en remarquant que~$f$ et~$g$ n'ont aucun z\'ero commun.

Supposons tout d'abord que $|k^\ast|\ne \{1\}$. Dans ce cas, d'apr\`es le lemme~\ref{lem:injectif}, nous pouvons trouver un \'element $r'>r$ avec $r'\in \sqrt{|k^\ast|}$ et une composante connexe~$D$ de $\{x\in X \mid |f(x)| \le r'\,|g(x)|\}$ tels que $C = D \cap \{x\in X \mid |f(x)| \le r\,|g(x)|\}$. D'apr\`es le lemme~\ref{lem:rhofr}, nous avons $\rho(\{x\in X \mid |f(x)| \le r'\,|g(x)|\}) \subset \sqrt{\rho(\As)}$. Choisissons une fonction~$g$ sur $\{x\in X \mid |f(x)| \le r'\,|g(x)|\}$ qui vaut~1 sur~$D$ et~0 sur les autres composantes connexes. Nous avons alors $D = \{x\in D \mid |g(x)| \ge 1\}$ et une nouvelle utilisation du lemme~\ref{lem:rhofr} montre que~$\rho(D) \subset \sqrt{\rho(\As)}$. 

Quitte \`a remplacer~$X$ par~$D$, nous pouvons donc supposer que~$X$ est connexe. D'apr\`es le lemme~\ref{lem:rhofr}, nous avons $\rho(V) \subset \sqrt{\la \rho(\As), r \ra_{\mon}}$. La condition du lemme~\ref{lem:Hstrictconnexe} est donc v\'erifi\'ee avec $H=\sqrt{\rho(\As)}$ et nous pouvons alors conclure en utilisant le lemme~\ref{lem:exemplelieuHstrict} (et, de nouveau, le fait que~$f$ et~$g$ n'ont aucun z\'ero commun).

Si $|k^\ast|=\{1\}$, on choisit un \'el\'ement $s \in \R_{+}^\ast \setminus \sqrt{\la \rho(\As), r\ra}$ et on se ram\`ene au cas pr\'ec\'edent \`a l'aide du morphisme de projection $\pi \colon X \ho_{k} k_{s} \to X$. Un raisonnement analogue \`a celui de la d\'emonstration du th\'eor\`eme~\ref{thm:Bosch} montre en effet que cette application pr\'eserve la connexit\'e par image directe et r\'eciproque.

La derni\`ere partie de l'\'enonc\'e se d\'emontre en appliquant le r\'esultat \`a la normalis\'ee de l'espace~$X$, op\'eration qui ne modifie pas~$\sqrt{\rho(\As)}$.
\end{proof}

\begin{cor}\label{cor:precaffinoide}
Soient $f, g \in\As$ et $r \in\R_{+}^\ast \setminus \sqrt{\rho(\As)}$. L'application naturelle
\[\pi_{0}(\{x\in X \mid |f(x)| \prec r\, |g(x)|\}) \to \pi_{0}(\{x\in X \mid |f(x)|\le r\, |g(x)|\})\]
est bijective.
\end{cor}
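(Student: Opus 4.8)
The plan is to reduce to Proposition~\ref{prop:invarianceaffinoide} by removing the common zeros of~$f$ and~$g$ via restriction to the affino\"{\i}de domains $X_{\eps}=\{x\in X\mid |g(x)|\ge \eps\}$, and then to reinstate those zeros by a gluing argument on connected components. Throughout, set $A=\{x\in X\mid |f(x)|<r\,|g(x)|\}$, $Z=\{x\in X\mid f(x)=g(x)=0\}$ and $W=\{x\in X\mid |f(x)|\le r\,|g(x)|\}$. Since $r>0$, the left-hand space is $W'=A\sqcup Z$, and on~$W$ the vanishing of~$g$ forces that of~$f$, so $W\setminus Z=W\cap\{g\ne0\}$, $A=W'\cap\{g\ne 0\}$, and~$Z$ is Zariski-closed. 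If $|k^\ast|=\{1\}$, I would first base-change to a field~$k_{s}$ with $s\notin\sqrt{\la\rho(\As),r\ra}$, the projection preserving connected components by direct and inverse image exactly as in the proof of Theorem~\ref{thm:Bosch}; so I assume $|k^\ast|\ne\{1\}$. Surjectivity of the natural map is then immediate: by Lemma~\ref{lem:adherence}, $C\cap W'$ is dense in~$C$, hence nonempty, for every connected component~$C$ of~$W$. It remains to prove injectivity, i.e.\ that $C\cap W'$ is connected for each such~$C$.

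For the strict-inequality locus I would feed Proposition~\ref{prop:invarianceaffinoide} through the~$X_{\eps}$. Choose $\eps\in\sqrt{\rho(\As)}$ arbitrarily small (possible since $\sqrt{|k^\ast|}$ is dense); on~$X_{\eps}$ the function~$g$ is invertible, so $(f,g)=(1)$ there, and Lemma~\ref{lem:rhofr} applied to $X_{\eps}=\{|1|\le \eps^{-1}|g|\}$ gives $\rho(X_{\eps})\subset\sqrt{\la\rho(\As),\eps^{-1}\ra_{\mon}}=\sqrt{\rho(\As)}$, whence $r\notin\sqrt{\rho(X_{\eps})}$. Proposition~\ref{prop:invarianceaffinoide} thus yields a bijection $\pi_{0}(A\cap X_{\eps})\xrightarrow{\sim}\pi_{0}(W\cap X_{\eps})$, natural in~$\eps$. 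As $\eps\downarrow 0$ the domains~$X_{\eps}$ exhaust $\{g\ne0\}$, and since these spaces are locally connected the natural map $\pi_{0}(A)\to\pi_{0}(W\setminus Z)$ is the colimit of these bijections, hence bijective. In particular each connected component of $W\setminus Z$ contains exactly one connected component of~$A$; write $A_{i}\subset(W\setminus Z)_{i}$ for this matching.

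I would then compare how~$A$ and $W\setminus Z$ attach to the shared closed part~$Z$. Both~$W$ and~$W'$ are the union of an open piece ($W\setminus Z$, resp.~$A$) and the closed piece~$Z$; by local connectedness the set of connected components of such a union is computed from the components of the open piece, the components of~$Z$, and the adjacency relation declaring an open component~$O$ and a component~$Z_{j}$ of~$Z$ linked when $\overline{O}\cap Z_{j}\ne\emptyset$. Injectivity will therefore follow once the two adjacency relations are shown to coincide, namely $\overline{A_{i}}\cap Z_{j}\ne\emptyset\iff \overline{(W\setminus Z)_{i}}\cap Z_{j}\ne\emptyset$; the implication $\Rightarrow$ is clear since $A_{i}\subset(W\setminus Z)_{i}$.

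The converse is the crux. Given $p\in\overline{(W\setminus Z)_{i}}\cap Z_{j}$, I would use that~$W$ has finitely many irreducible components and that $(W\setminus Z)_{i}$ is the union of those $F\setminus Z$ with~$F$ meeting it: each such~$F$ satisfies $F\not\subset Z$, and $F\setminus Z$ is connected by~\cite{Bart} and~\cite{Lu}, hence contained in~$(W\setminus Z)_{i}$. Thus $p\in\overline{F\setminus Z}\subset F$ for some such~$F$. Lemma~\ref{lem:adherence} gives $\overline{F\cap A}=F\ni p$, and $F\cap A\subset A\cap(W\setminus Z)_{i}=A_{i}$, so $p\in\overline{A_{i}}\cap Z_{j}$. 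The adjacencies then match, $C\cap W'$ is connected for every component~$C$ of~$W$, and the bijection follows. The main obstacle is exactly this last step --- guaranteeing that the strict locus~$A$ approaches every component of~$Z$ reached by the larger set $W\setminus Z$ --- and the density statement of Lemma~\ref{lem:adherence} combined with the connectedness of~$F\setminus Z$ is precisely what makes it go through.
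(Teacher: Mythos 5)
Your proof is correct, and it takes a genuinely different route from the paper's, even though both rest on the same pillars: the trivial-valuation reduction via $k_{s}$, Proposition~\ref{prop:invarianceaffinoide} applied on affinoid domains of the form $\{|g|\ge\eps\}$ where $g$ is invertible (so that $(f,g)=(1)$ holds there and Lemma~\ref{lem:rhofr} keeps $r$ outside the relevant $\sqrt{\rho}$), and the density statement of Lemma~\ref{lem:adherence}. The paper works inside a fixed connected component~$C$ of $\{|f|\le r\,|g|\}$: it decomposes~$C$ into its irreducible components~$F_{i}$, proves each $F_{i}\cap\{|f|\prec r\,|g|\}$ connected by exhausting $F_{i}\cap\{g\ne0\}$ with the affinoids $F_{i,\eps}$ --- where Corollary~\ref{cor:ggeepsirreductible} guarantees each $F_{i,\eps}$ is connected, so the increasing union is connected with no $\pi_{0}$-colimit subtlety --- and then glues pairwise intersecting $F_{i},F_{j}$ through sets $\{|g|\ge\alpha\}$ over the index set~$L$, using Corollary~\ref{cor:ggeepsirreductible} and Proposition~\ref{prop:invarianceaffinoide} again. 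You instead establish one global bijection $\pi_{0}(A)\cong\pi_{0}(W\setminus Z)$ as a colimit of the bijections on the~$X_{\eps}$, then reattach~$Z$ by matching adjacency graphs, the only nontrivial direction being handled by Lemma~\ref{lem:adherence} on the irreducible components of~$W$ together with the connectedness of $F\cap\{g\ne0\}$ from~\cite{Bart} and~\cite{Lu} (correctly reduced to the hypersurface case via your remark that $g=0$ forces $f=0$ on~$W$). What your organization buys is that it bypasses the paper's delicate pairwise-gluing step; what it costs is two steps that deserve a line more of justification. First, the $\pi_{0}$-colimit: bare local connectedness of the union does not suffice for increasing unions (e.g.\ $[0,1]=\bigcup_{n}(\{0\}\cup[1/n,1])$ has connected union but the colimit of the $\pi_{0}$'s has two elements); here it works because $|g|$ is continuous, so every point of $\{g\ne0\}$ has a connected neighborhood contained in some~$X_{\eps}$, which makes the classes of the relation ``lie in a common component of some $W\cap X_{\eps}$'' open, hence equal to the components of $W\setminus Z$. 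Second, the closed-set splitting implicit in your adjacency-graph computation of $\pi_{0}(W)$ and $\pi_{0}(W')$ requires the component sets involved to be finite; this does hold --- $W$ has finitely many irreducible components, so $\pi_{0}(W\setminus Z)$ and hence $\pi_{0}(A)$ are finite, and $Z$ is affinoid --- but it is exactly what legitimizes the claim that $\pi_{0}$ of ``open piece union $Z$'' is the graph quotient, so it should be recorded.
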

\begin{proof}
La preuve commence de la m\^eme fa\c{c}on que la pr\'ec\'edente. Nous pouvons supposer que $V = \{x\in X \mid |f(x)|\le r\,|g(x)|\}$ n'est pas vide. Consid\'erons l'une de ses composantes connexes~$C$. D'apr\`es le lemme~\ref{lem:adherence}, la partie $C \cap \{x\in X \mid |f(x)| \prec r\, |g(x)|\}$ n'est pas vide et il suffit de montrer qu'elle est connexe.

Le m\^eme argument que celui utilis\'e \`a la fin de la preuve pr\'ec\'edente permet de supposer que~$k$ n'est pas trivialement valu\'e. 

Notons~$F_{1},\dotsc,F_{m}$ les composantes irr\'eductibles de~$C$. Soit $i\in\cn{1}{m}$. Si $F_{i} \subset \{x\in X \mid f(x)=g(x)=0\}$, alors $\{x\in F_{i} \mid |f(x)|\prec r\, |g(x)|\} = F_{i}$ est connexe.

Sinon, d'apr\`es le corollaire~\ref{cor:ggeepsirreductible}, pour tout~$\eps>0$ assez petit, l'espace $F_{i,\eps} = \{x\in F_{i} \mid |g(x)| \ge \eps\}$ est connexe. Si l'on suppose de plus que $\eps\in \sqrt{|k^\ast|}$, alors, d'apr\`es le lemme~\ref{lem:rhofr}, en notant~$\Bs_{i,\eps}$ l'alg\`ebre de~$F_{i,\eps}$, nous avons $\sqrt{\rho(\Bs_{i,\eps})} \subset \sqrt{\rho(\As)}$. Nous pouvons donc appliquer la proposition pr\'ec\'edente et en d\'eduire que $\{x\in F_{i,\eps} \mid |f(x)| < r|g(x)|\}$ est connexe. En \'ecrivant $\{x\in F_{i} \mid g(x) \ne 0\}$ comme la r\'eunion des espaces~$F_{i,\eps}$ avec $\eps\in \sqrt{|k^\ast|}$ assez petit, on montre que l'espace
\[\{x\in F_{i} \mid g(x) \ne 0 \textrm{ et } |f(x)| < r|g(x)|\} = \{x\in F_{i} \mid |f(x)| < r|g(x)|\}\]
est connexe. Or, d'apr\`es le lemme~\ref{lem:adherence}, celui-ci est dense dans~$F_{i}$ et l'on en d\'eduit que $\{x\in F_{i} \mid |f(x)| \prec r|g(x)|\}$ est connexe.

Nous venons donc de montrer que, pour toute composante irr\'eductible~$F_{i}$ de~$C$, $F_{i}\cap \{x\in X \mid |f(x)| \prec r\, |g(x)|\}$ est connexe. Pour montrer que $D = C \cap \{x\in X \mid |f(x)| \prec r\, |g(x)|\}$ est connexe et conclure, il suffit de montrer que si~$F_{i}$ et~$F_{j}$ sont deux composantes irr\'eductibles de~$C$ qui se coupent, alors il existe des points~$x_{i}$ de~$F_{i}$ et~$x_{j}$ de~$F_{j}$ qui appartiennent \`a la m\^eme composante connexe de~$D$.

Consid\'erons donc deux telles composantes~$F_{i}$ et~$F_{j}$. Si la fonction~$g$ s'annule sur~$F_{i} \cap F_{j}$, alors 
\[\{x\in F_{i} \mid |f(x)| \prec r|g(x)|\} \cap \{x\in F_{j} \mid |f(x)| \prec r|g(x)|\} \ne \emptyset\]
et le r\'esultat est imm\'ediat.

Supposons, \`a pr\'esent, que la fonction~$g$ est minor\'ee en valeur absolue sur~$F_{i}\cap F_{j}$ par une constante~$\alpha>0$. Nous pouvons supposer que $\alpha\in\sqrt{|k^\ast|}$. Alors, d'apr\`es le lemme~\ref{lem:adherence}, il existe des points $x_{i}\in F_{i}$ et $x_{j}\in F_{j}$ v\'erifiant $|f(x_{i})| < r\, |g(x_{i})|$ et $|f(x_{j})| < r\, |g(x_{j})|$. En particulier, la fonction~$g$ ne s'annule en aucun de ces deux points et, quitte \`a diminuer~$\alpha$, nous pouvons supposer que $|g(x_{i})|\ge\alpha$ et $|g(x_{j})|\ge\alpha$.

Notons $L = \{\ell\in\cn{1}{m} \mid F_{\ell} \cap F_{i} \cap F_{j} \ne \emptyset\}$. D'apr\`es le corollaire~\ref{cor:ggeepsirreductible}, quitte \`a diminuer encore~$\alpha$, nous pouvons supposer que, pour tout~$\ell\in L$, $\{x\in F_{\ell} \mid |g(x)|\ge \alpha\}$ est connexe.  Dans ce cas, la partie
\[\bigcup_{\ell \in L} \{x\in F_{\ell} \mid |g(x)|\ge \alpha\}\] 
est une partie connexe de $\{x\in X \mid |g(x)|\ge \alpha \textrm{ et } |f(x)|\le r \,|g(x)|\}$. En utilisant de nouveau le lemme~\ref{lem:rhofr} et la proposition~\ref{prop:invarianceaffinoide}, on montre que la partie
\[\bigcup_{\ell \in L} \{x\in F_{\ell} \mid |g(x)|\ge \alpha\} \cap \{x\in X\mid |f(x)| < r\,|g(x)|\}\] 
est contenue dans une partie connexe de~$D$. Puisqu'elle contient les points~$x_{i}$ et~$x_{j}$, nous pouvons conclure.
\end{proof}

En combinant ce r\'esultat \`a celui du corollaire~\ref{cor:partitionsansrhoAs}, on obtient le th\'eor\`eme~\ref{thm:partition} annonc\'e en introduction.

\begin{cor}\label{cor:YHstrictfg}
Soit~$H$ un sous-groupe de~$\R_{+}^\ast$ contenant~$|k^\ast|$. Soit~$Y$ un espace $k$-analytique $H$-strict. Soient $f,g\in\Os(Y)$ et $s\in \R_{+}^\ast\setminus \sqrt{H}$. Les applications naturelles 
\[\pi_{0}(\{y\in Y \mid |f(y)| \prec s\, |g(y)|\}) \to \pi_{0}(\{y\in Y \mid |f(y)| \le s\, |g(y)|\})\]
sont bijectives.
\end{cor}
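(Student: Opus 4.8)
The plan is to reduce the statement to the affinoid case established in Corollaire~\ref{cor:precaffinoide} and then to glue. Write $W = \{y\in Y \mid |f(y)| \le s\,|g(y)|\}$ and $W' = \{y\in Y \mid |f(y)| \prec s\,|g(y)|\}$, so that $W'\subseteq W$ and the map under study is the one induced by this inclusion. Since~$Y$ is $H$-strict, it admits a net of $H$-strict affinoid domains; fix such a net $(V_{i})_{i}$. For each~$i$, set $\Bs_{i} = \Os(V_{i})$. The $H$-strictness of~$V_{i}$ gives $\rho(\Bs_{i})\subseteq \sqrt{H}\cup\{0\}$, hence $\sqrt{\rho(\Bs_{i})}\subseteq \sqrt{H}$; as $s\in \R_{+}^\ast\setminus\sqrt{H}$, we get $s\notin \sqrt{\rho(\Bs_{i})}$. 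Applied on~$V_{i}$ to the restrictions of~$f$ and~$g$, Corollaire~\ref{cor:precaffinoide} therefore shows that
\[\pi_{0}(W'\cap V_{i}) \longrightarrow \pi_{0}(W\cap V_{i})\]
is bijective. The very same remark applies to each overlap $V_{i}\cap V_{j}$ — again an $H$-strict affinoid domain, or, in the non-separated case, to the members of the net covering it.

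The bijectivity on the pieces and on their overlaps should then propagate to~$Y$ by descent of~$\pi_{0}$ along the net. Concretely, for an atlas of a $k$-analytic space the set of connected components is the colimit, over the nerve of the covering, of the sets of connected components of the members; equivalently $\pi_{0}(W)$ is the coequalizer of $\coprod_{i,j}\pi_{0}(W\cap V_{i}\cap V_{j}) \rightrightarrows \coprod_{i}\pi_{0}(W\cap V_{i})$, and likewise for~$W'$. Since the comparison maps are bijective both at the level of the~$V_{i}$ and at the level of the overlaps, the induced map on coequalizers $\pi_{0}(W')\to \pi_{0}(W)$ is bijective as well.

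As an independent check of surjectivity, Lemme~\ref{lem:adherence}, applied to~$Y$ with its parameter~$r$ equal to~$s$, shows that $W'\cap C$ is dense in~$C$ for every connected component~$C$ of~$W$; in particular every component of~$W$ meets~$W'$, so the map is onto. The heart of the matter, and the step I expect to require the most care, is the gluing that turns the local bijections into a global one. The delicate points are that the net consists of compact (not open) domains, so one must invoke the quasi-net structure underlying the definition of~$Y$ and its associated descent for~$\pi_{0}$, together with the requisite local finiteness; and that in the non-separated case the pairwise intersections must be handled through the net rather than as single affinoids. Once this descent is in place the conclusion is formal: all the genuine geometric input has already been isolated in Corollaire~\ref{cor:precaffinoide} and Lemme~\ref{lem:adherence}.
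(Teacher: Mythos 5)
Your reduction is the right one, and it matches the paper's: restrict to the members of an $H$-strict net, apply Corollaire~\ref{cor:precaffinoide} there and on ($H$-strict affinoid pieces of) the overlaps, and get surjectivity globally from the density statement of Lemme~\ref{lem:adherence}. But the step you defer --- ``$\pi_{0}$ is the coequalizer over the nerve of the net'' --- is not a formal descent principle; it is false at the level of generality in which you invoke it, and proving a correct substitute for it is the entire content of the paper's proof. Concretely: for a quasi-net of \emph{compact} subsets, the canonical map from the coequalizer of $\coprod_{i,j}\pi_{0}(V_{i}\cap V_{j})\rightrightarrows\coprod_{i}\pi_{0}(V_{i})$ to $\pi_{0}$ of the space is always surjective but need not be injective. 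Take $X=[0,1]$ covered by the compacts $A=\{0\}\cup[1/2,1]\cup[1/4,1/3]\cup[1/6,1/5]\cup\dotsb$ and $B=\{0\}\cup[1/3,1/2]\cup[1/5,1/4]\cup\dotsb$: this is a quasi-net (both contain $0$ and $A\cup B=[0,1]$), yet $\{0\}$ is a connected component of both $A$ and $B$, no finite chain links it to the interval components, so the coequalizer has two elements while $\pi_{0}([0,1])$ has one. The usual argument ($\pi_{0}$ left adjoint to the discrete-space functor, hence colimit-preserving) requires local connectedness of all spaces involved, and this fails exactly where you need it: $W'=\{|f|\prec s\,|g|\}$ and the pieces $W'\cap V_{i}$ are \emph{not} analytic domains --- each is the union of an open set and a Zariski-closed set --- so no descent statement for $k$-analytic spaces applies to them, and infinitely many components of $W'$ can a priori accumulate at a point of $W\setminus W'$, which is precisely the configuration of the counterexample. (For $W$ itself, which is a closed analytic domain, your formula is harmless; the danger is entirely on the $W'$ side, i.e.\ in the injectivity of $\pi_{0}(W')\to\pi_{0}(W)$.)

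What the paper does instead is a hands-on version of your gluing that rules out this accumulation. Fix a component $C$ of $W$ and set $D=C\cap W'$ with components $(D_{i})_{i\in I}$; Lemme~\ref{lem:adherence} gives $\overline{D}=C$. For $y\in C\setminus D$, the $H$-strict structure provides finitely many $H$-strict affinoids $V_{1},\dotsc,V_{n}$ containing $y$ whose union is a neighbourhood of $y$; compactness of $C\cap V_{p}$ together with Corollaire~\ref{cor:precaffinoide} shows each $C\cap V_{p}\cap W'$ has only \emph{finitely many} components, whence $y\in\overline{D_{i}}$ for some $i$; and a uniqueness argument on a small $H$-strict affinoid inside $V_{p}\cap V_{q}$ (there is a \emph{unique} component of $W'$ there whose closure contains $y$, again by Corollaire~\ref{cor:precaffinoide} plus density) shows this $i$ is unique. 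Hence $C=\bigcup_{i}\overline{D_{i}}$ with the $\overline{D_{i}}$ pairwise disjoint, closed and connected; connectedness of $C$ forces $\Card(I)=1$, i.e.\ $D$ connected. Your proposal becomes a proof once you replace the asserted colimit formula by exactly this local-finiteness-plus-local-uniqueness argument.
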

\begin{proof}
Nous pouvons supposer que $V = \{y\in Y \mid |f(y)| \le s\, |g(y)|\}$ n'est pas vide. Soit~$C$ une composante connexe de~$V$. Posons $V^\prec = \{y\in Y \mid |f(y)| \prec s\, |g(y)|\}$. Il suffit de montrer que $D = C \cap V^\prec$ est connexe et non vide. D'apr\`es le lemme~\ref{lem:adherence}, nous avons~$\overline{D} = C$. En particulier, $D$~n'est pas vide.

Notons $(D_{i})_{i\in I}$ la famille des composantes connexes de~$D$. Soit~$y$ un point de~$C\setminus D$. Puisque l'espace~$Y$ est $H$-strict, il existe des domaines affino\"{\i}des $H$-stricts $V_{1},\dotsc,V_{n}$ de~$Y$ contenant~$y$ dont la r\'eunion est un voisinage de~$y$ et dont les intersections deux \`a deux sont $H$-strictes. 

Pour tout $p\in\cn{1}{n}$, $C\cap V_{p}$ est compact et poss\`ede donc un nombre fini de composantes connexes. D'apr\`es le corollaire~\ref{cor:precaffinoide}, il en va de m\^eme pour $C\cap V_{p} \cap V^\prec$. En particulier, $V_{p}$ ne rencontre qu'un nombre fini de composantes~$D_{i}$. 

Puisque la r\'eunion des~$V_{p}$ forme un voisinage de~$y$, il existe donc un \'el\'ement~$i\in I$ tel que $y\in \overline{D_{i}}$. Nous venons de montrer que 
\[C = \overline{D} = \bigcup_{i\in I} \overline{D_{i}}.\]

Soient maintenant $i,j\in I$ tels que $y \in \overline{D}_{i} \cap \overline{D}_{j}$. Soit~$p$ tel que $y \in \overline{D_{i} \cap V_{p}}$ et soit~$q$ tel que $y \in \overline{D_{j} \cap V_{q}}$. Il existe un domaine affino\"{\i}de $H$-strict~$W$ de~$V_{p}\cap V_{q}$ contenant~$y$. D'apr\`es le lemme~\ref{lem:adherence} et le corollaire~\ref{cor:precaffinoide}, les espaces $W \cap V^\prec$, $V_{p}\cap V^\prec$ et $V_{q}\cap V^\prec$ ne sont pas vides et il existe une unique composante connexe~$E$ (resp.~$E_{p}$, resp.~$E_{q}$) de $W \cap V^\prec$ (resp. $V_{p}\cap V^\prec$, resp. $V_{q}\cap V^\prec$) telle que $y\in \overline{E}$ (resp. $y\in \overline{E}_{p}$, resp. $y\in \overline{E}_{q}$).

On en d\'eduit d'une part que~$E$ est contenu dans~$E_{p}$ et~$E_{q}$ et d'autre part que ces deux derniers espaces rencontrent, et donc sont contenus dans, $D_{i}$ et~$D_{j}$ respectivement. Par cons\'equent, $i=j$. 

Finalement, nous avons montr\'e que~$C$ est l'union des adh\'erences des composantes~$D_{i}$ et que ces adh\'erentes sont disjointes. Puisqu'elles sont connexes, ce sont les composantes connexes de~$C$. On en d\'eduit que~$I$ est r\'eduit \`a un \'el\'ement et donc que~$D$ est connexe.
\end{proof}

\`A l'aide d'une r\'ecurrence bas\'ee sur le lemme~\ref{lem:rhofr} et le corollaire~\ref{cor:YHstrictfg}, nous pouvons finalement obtenir le r\'esultat qui suit.

\begin{cor}
Soit~$H$ un sous-groupe de~$\R_{+}^\ast$ contenant~$|k^\ast|$. Soit~$Y$ un espace $k$-analytique $H$-strict. Soient $f_{1},\dotsc,f_{n},g_{1},\dotsc,g_{n}\in\Os(Y)$ telles que, pour tout~$i$, $f_{i}$ et~$g_{i}$ ne s'annulent pas simultan\'ement sur~$Y$. Soient $r_{1},\dotsc,r_{n} \in\R_{+}^\ast$ tels que, pour tout~$i$,  $r_{i}$ soit d'image non nulle dans $\R_{+}^\ast/\sqrt{\la H, r_{1},\dotsc, r_{i-1}\ra}$, 

Alors, l'application naturelle
\[\pi_{0}\Big(\bigcap_{1\le i\le n} \{y\in Y \mid |f_{i}(y)| < r_{i}\, |g_{i}(y)|\}\Big) \to \pi_{0}\Big(\bigcap_{1\le i\le n} \{y\in Y \mid |f_{i}(y)| \le r_{i}\, |g_{i}(y)|\}\Big)\]
est bijective.
\end{cor}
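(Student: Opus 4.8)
The plan is to argue by induction on~$n$, or equivalently to interpolate between the two spaces in the statement by turning one inequality at a time from strict into large. For $0\le m\le n$, set
\[V_{m} = \bigcap_{1\le i\le m}\{y\in Y \mid |f_{i}(y)|\le r_{i}\,|g_{i}(y)|\} \cap \bigcap_{m< i\le n}\{y\in Y \mid |f_{i}(y)|< r_{i}\,|g_{i}(y)|\},\]
so that~$V_{0}$ is the source and~$V_{n}$ the target of the map under study, and the inclusions $V_{0}\subset V_{1}\subset\dotsb\subset V_{n}$ induce it. It therefore suffices to prove that each map $\pi_{0}(V_{m-1})\to\pi_{0}(V_{m})$ is bijective.

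For fixed~$m$, introduce the intermediate space
\[A_{m} = \bigcap_{1\le i< m}\{y\in Y \mid |f_{i}(y)|\le r_{i}\,|g_{i}(y)|\} \cap \bigcap_{m< i\le n}\{y\in Y \mid |f_{i}(y)|< r_{i}\,|g_{i}(y)|\},\]
which involves no constraint coming from the pair~$(f_{m},g_{m})$. One checks directly that $V_{m-1} = A_{m}\cap\{|f_{m}|< r_{m}\,|g_{m}|\}$ and $V_{m} = A_{m}\cap\{|f_{m}|\le r_{m}\,|g_{m}|\}$. Granting for the moment that~$A_{m}$ is $H_{m}$-strict with $H_{m}=\la H,r_{1},\dotsc,r_{m-1}\ra$, the hypothesis that~$r_{m}$ has nonzero image in $\R_{+}^\ast/\sqrt{H_{m}}$ means precisely that $r_{m}\in\R_{+}^\ast\setminus\sqrt{H_{m}}$, so corollaire~\ref{cor:YHstrictfg} applies to the $H_{m}$-strict space~$A_{m}$, the functions~$f_{m},g_{m}$ and the real number~$r_{m}$. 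Since~$f_{m}$ and~$g_{m}$ have no common zero on~$Y$, hence none on~$A_{m}$, the relation $|f_{m}|\prec r_{m}\,|g_{m}|$ coincides with $|f_{m}|< r_{m}\,|g_{m}|$; the corollary thus yields exactly the bijectivity of $\pi_{0}(V_{m-1})\to\pi_{0}(V_{m})$. Composing these bijections over $m=1,\dotsc,n$ gives the statement.

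The core of the argument is to verify that~$A_{m}$ is $H_{m}$-strict. The strict inequalities indexed by $i>m$ carve out an open analytic subspace, and an open subspace of an $H_{m}$-strict space is again $H_{m}$-strict, so it is enough to treat the space $B_{m}=\bigcap_{1\le i< m}\{|f_{i}|\le r_{i}\,|g_{i}|\}$ defined by the large inequalities alone. Here the absence of common zeros is crucial: at a point of~$B_{m}$ where some~$g_{i}$ vanished we would also have $|f_{i}|\le 0$, forcing~$f_{i}$ to vanish, which is excluded; hence each~$g_{i}$ is invertible on~$B_{m}$. Covering~$B_{m}$ by $H$-strict affinoid domains~$V$ of~$Y$ small enough that the~$g_{i}$ are invertible on them, on each such~$V$ the inequality $|f_{i}|\le r_{i}\,|g_{i}|$ rewrites as $|f_{i}g_{i}^{-1}|\le r_{i}$, so lemme~\ref{lem:rhofr} applied to~$V$ (with the functions $f_{i}g_{i}^{-1}$ and the unit~$1$) shows that the spectral radius of $V\cap B_{m}$ lies in $\sqrt{\la\rho(\Os(V)),r_{1},\dotsc,r_{m-1}\ra}\subset\sqrt{H_{m}}$, the inclusion using that~$\rho(\Os(V))\subset\sqrt{H}\cup\{0\}$. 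Thus $V\cap B_{m}$, and therefore~$A_{m}$, is $H_{m}$-strict.

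I expect this last verification to be the \emph{main obstacle}, precisely because the inequalities involve distinct functions~$g_{i}$ rather than a single denominator as in lemme~\ref{lem:rhofr}; the observation that no common zeros forces each~$g_{i}$ to be invertible on the large-inequality locus is exactly what permits the reduction to that lemma. Finally, as in the proofs of corollaire~\ref{cor:precaffinoide} and corollaire~\ref{cor:YHstrictfg}, the case of a trivially valued base field~$k$ is handled by base change to~$k_{s}$ for a suitable~$s$, an operation preserving connectedness in both directions, so one may freely assume throughout that the relevant groups are nontrivial.
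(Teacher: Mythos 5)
Your proof is correct and follows exactly the route the paper indicates (the paper gives no detailed proof, only the one-sentence remark that the result follows \og \`a l'aide d'une r\'ecurrence bas\'ee sur le lemme~\ref{lem:rhofr} et le corollaire~\ref{cor:YHstrictfg} \fg)~: turning one inequality at a time from strict to large, applying corollaire~\ref{cor:YHstrictfg} at each step after checking that the intermediate space is $\la H,r_{1},\dotsc,r_{m-1}\ra$-strict via lemme~\ref{lem:rhofr}. You in fact supply more detail than the paper, and your key verification --- inverting each $g_{i}$ locally on the large-inequality locus (legitimate since $f_{i}$ and $g_{i}$ have no common zero) so as to reduce the distinct denominators to the single-denominator form of lemme~\ref{lem:rhofr} --- is precisely the step the paper's sketch leaves implicit.
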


\begin{rem}
M\^eme si nous ne l'avons pas mentionn\'e explicitement, tous les r\'esultats de cette section restent valables en rempla\c{c}ant l'ensemble des composantes connexes par l'ensemble des composantes connexes g\'eom\'etriques, l'ensemble des composantes irr\'eductibles ou l'ensemble des composantes irr\'eductibles g\'eom\'etriques. Il suffit, pour le d\'emontrer, de les appliquer \`a l'espace obtenu en \'etendant les scalaires au compl\'et\'e d'une cl\^oture alg\'ebrique de~$k$ ou \`a la normalis\'ee de l'espace.
\end{rem}

\nocite{}
\bibliographystyle{smfalpha}
\bibliography{biblio}

\end{document}